\documentclass[sort&compress,3p,12pt]{elsarticle}
\journal{Journal of Statistical Planning and Inference}

\usepackage[T1]{fontenc}
\usepackage[utf8]{inputenc}

\usepackage[UKenglish]{babel}
\usepackage[UKenglish]{isodate}

\usepackage[labelfont=bf]{caption}

\usepackage{amsmath,amsfonts,amssymb,amsthm,booktabs,color,epsfig,graphicx,hyperref,url}

\usepackage{lscape}
\usepackage{mathtools}
\usepackage{commath}
\usepackage{amsmath}
\usepackage{amsthm}
\usepackage{amssymb}
\usepackage{dsfont}
\usepackage{graphicx}
\usepackage{caption}
\usepackage{subcaption}
\usepackage{booktabs} 
\usepackage{adjustbox}
\usepackage{multirow}
\usepackage{pgfplots}
\usepackage{listings}
\usepackage{url}
\usepackage{natbib}
\usepackage{fouriernc}
\usepackage{xcolor}
\usepackage{enumitem}

\setcounter{MaxMatrixCols}{20}

\usepackage{sectsty}
\usepackage[singlespacing]{setspace}

\newcommand{\ind}{\mathds{1}}

\newcommand{\ew}{\mathrm{E}}
\newcommand{\pr}{\mathrm{Pr}}

\newcommand{\Rbb}{\mathbb{R}}

\newcommand{\Nbb}{\mathbb{N}}

\newcommand{\var}{\mathrm{var}}
\newcommand{\cov}{\mathrm{cov}}
\newcommand{\cf}{\mathcal{F}}

\newcommand{\cp}{\mathcal{P}}

\newcommand{\cb}{\mathcal{B}}

\def\1{{\ind}}

\theoremstyle{plain}
\newtheorem{theorem}{Theorem}

\newtheorem{lemma}{Lemma}
\newtheorem{corollary}{Corollary}

\theoremstyle{definition}
\newtheorem{definition}{Definition}

\begin{document}

\allowdisplaybreaks

\begin{frontmatter}

\title{Simultaneous inference for partial areas under receiver operating curves---with a view towards efficiency}

\author[1]{Maximilian Wechsung \corref{mycorrespondingauthor}}
\author[1]{Frank Konietschke}

\address[1]{Institut für Biometrie und Klinische Epidemiologie, Charité--Universitätsmedizin Berlin,\\ Charitéplatz 1, 10117 Berlin, Germany}

\cortext[mycorrespondingauthor]{Corresponding author. Email address: \url{maximilian.wechsung@posteo.de}}

\begin{abstract}
We propose new simultaneous inference methods for diagnostic trials with elaborate factorial designs. Instead of the commonly used total area under the receiver operating characteristic (ROC) curve, our parameters of interest are partial areas under ROC curve segments that represent clinically relevant biomarker cut-off values. We construct a nonparametric multiple contrast test for these parameters and show that it asymptotically controls the family-wise type one error rate. Finite sample properties of this test are investigated in a series of computer experiments. We provide empirical and theoretical evidence supporting the conjecture that statistical inference about partial areas under ROC curves is more efficient than inference about the total areas. 
\end{abstract}

\begin{keyword} 
partial AUC \sep trimmed Mann--Whitney statistic \sep empirical processes
\MSC[2020] Primary 62H10 \sep
Secondary 62G10
\end{keyword}

\end{frontmatter}

\section{Introduction}

Modern medicine can rely on a variety of diagnostic biomarkers ranging from imaging techniques over gene expression measurements to classical laboratory parameters like antibody concentration in a blood sample. Given their ability to detect diseases or disease-related complications at ever earlier stages, they are indispensable in today's clinical practice. Yet, before a biomarker can be admitted for use, its reliability must be ascertained in a series of diagnostic studies so as to satisfy the high standards of contemporary evidence-based medicine.    

Many biomarker measurements produce values on continuous or discrete scales. In order to yield a binary diagnostic decision, the clinical practitioner usually compares the observed marker value to a disease-defining threshold. Subjects with marker values exceeding this threshold will then receive a positive diagnosis. Of course, when we evaluate the marker's viability in a diagnostic study, we do not yet know the optimal choice for the disease-defining threshold. To characterize the marker's performance independently of the specific threshold, we consider the receiver operating characteristic (ROC) curve. For each possible threshold value $t$, the marker has a probability of assigning a positive diagnosis correctly {(true positive rate, TPR$(t)$)} and a probability of doing so incorrectly {(false positive rate, FPR$(t)$)}. With $t$ ranging over $\Rbb$, the ROC curve aggregates all the points $(\mathrm{FPR}(t), \mathrm{TPR}(t))$. The degree to which the resulting curve is attracted by the point $(0,1)$ characterizes the marker's diagnostic reliability---the area under the curve (AUC) is therefore a popular parameter of interest in diagnostic trials.

A typical diagnostic trial examines the reliability of biomarkers at different levels of several experimental factors. For instance, we might want to compare two MRI imaging biomarkers, each of which is interpreted by several different readers. A primary objective is then to uncover the existing main and interaction effects of these factors on the parameter of interest. Statistically, this leads to the problem of testing a family of linear hypotheses \citep{Konietschke2018}. Multiple contrast tests are a tailor-made technique for this type of problem. They are designed to simultaneously test several linear hypotheses while keeping the family-wise type I error rate at the desired level. Being considerably more flexible and less conservative than traditional approaches based on p-value corrections \citep{konietschke2012, hothorn2008simultaneous, bretz2001numerical}, they seem like an ideal method to consider for the statistical analysis of factorial diagnostic studies. It was only recently, though, when Konietschke et al. \citep{Konietschke2018} implemented a multiple contrast test in this context. Based on nonparametric estimators, they constructed multiple contrast tests for AUCs of diagnostic markers in general factorial diagnostic studies. Further developments were due to Blanche et al. \citep{blanche2020closed}. Thus, the possibilities for nonparametric statistical inference about biomarker AUCs in diagnostic trials have greatly improved over the last years.   

Popular though it may be, using the AUC as a performance measure of diagnostic markers has a serious drawback. Even in the simple case of comparing just two markers in terms of their AUCs, the picture can get quite ambiguous if the corresponding ROC curves cross each other \citep{Qin2011}. In this case, the advantage that one marker may enjoy over the other at moderate threshold values $t$ might be offset by the reverse situation at more extreme thresholds, such that both AUCs can be of comparable magnitude even if the ROC curves are rather distinctly shaped. In clinical applications extreme threshold values would most certainly be avoided because they risk an unbearable reduction of the true positive or increase of the false positive probability, and so the marker that is superior for moderate $t$ would be preferable from this perspective. However, the two AUCs being statistically unintelligible, the superior properties of that marker would not be recognized. 

{
The following data example illustrates the just described phenomenon. The example is based on a set of observational data collected at the Department of Radiology at Charité--Universitätsmedizin Berlin. The data set contains the values of three different biomarkers as well as the health status of a total number of 146 patients that were screened for prostate cancer. Since this project is work in progress with a medical publication pending, we cannot reveal the biomarker names nor the raw data. The research question is whether there are any significant differences in the biomarkers' ability to detect prostate cancer. Figure \ref{Fig:example} shows the corresponding ROC curves. The diagram can be divided into two parts: the more central part where a clear order between the ROC curves (green-black-red) is visually apparent, and the marginal part where the three ROC curves are closer together and exhibit a reversed order compared to the central part. In terms of the total AUC, the visible difference in the central part might be countered by the marginal part to the extend that no statistically significant difference in the markers' total AUCs is detectable. Indeed, as we shall see in Section \ref{sec:numerical}, the total areas under the curves are not significantly different. This motivates two intuitive questions. First, are all ROC curve segments equally important for a clinically relevant assessment of the biomarker performance, or is the central part maybe more relevant than the periphery of the diagram? Second, if it turns out that the central part of the ROC curves should indeed be more relevant than the marginal parts, how do we find out whether the visible differences in this part are statistically significant? We can answer both questions only if a useful definition of the term ``clinically relevant ROC curve segment'' and a corresponding statistical parameter are available.}

\begin{figure}[h]
\includegraphics[width=\textwidth]{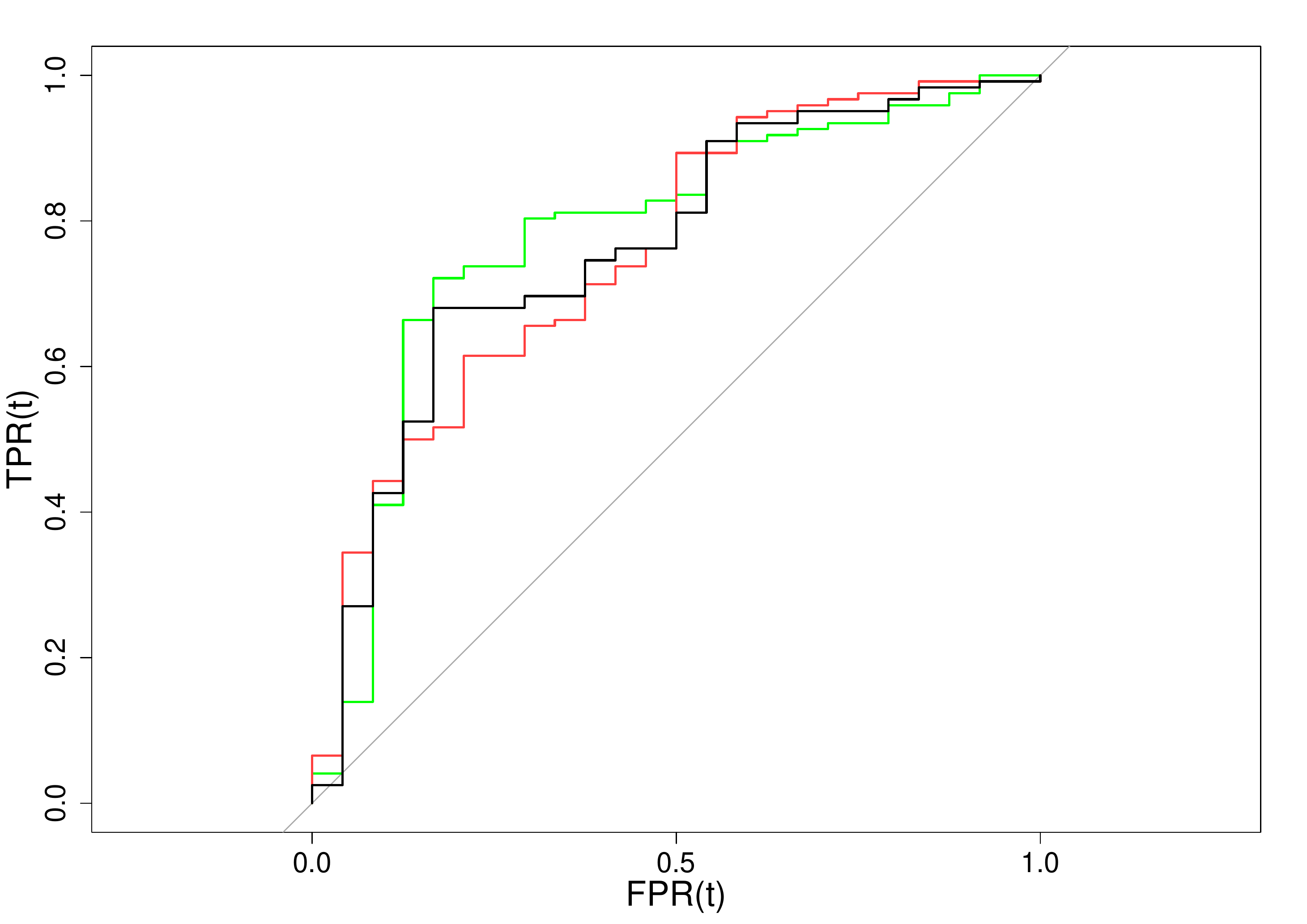}
\caption{ROC curves of three competing diagnostic imaging biomarkers for prostate cancer, based on a retrospective data set obtained at the Department of Radiology, Charité--Universitätsmedizin Berlin.}
\label{Fig:example}
\end{figure}   

Many authors have proposed to define a statistically relevant curve segment via an upper and a lower boundary for the false positive probability \citep{walter2005, mcclish1989, zhang2002, dodd2003, wang2012, ma2013, bandos2017, yang2017}. {The area under this curve segment is then supposed to be an alternative to the AUC as a statistical parameter of interest. We call this parameter \textit{FPR-based partial AUC}. It has several conceptual weaknesses.} First of all, it seems generally rather unintuitive to claim that a threshold value for a biomarker must produce a certain minimum amount of false positive results in order to qualify as clinically relevant. Rather, the FPR lower bound is merely technically motivated, its sole purpose being to implicitly define a TPR lower bound: Since the ROC curve is monotonically increasing, an FPR lower bound automatically defines a TPR lower bound. However, the relationship between a given FPR lower bound and the induced TPR lower bound depends on the concrete shape of the curve, implying two additional reasons why this approach is questionable. First, the ROC curves are unknown. This makes it hard to justify a chosen FPR lower bound because there is generally no way of knowing a priori what the given choice means in terms of the TPR. Second, for a given FPR bound, different ROC curves imply a different TPR bound. For a comparison of several markers, this means that a common FPR lower bound assigned to all involved markers implies different TPR lower bounds for each marker. Why would different markers that are used for the same clinical application have different minimal acceptable true positive rates? 

{These conceptual shortcomings motivate the search for alternative strategies incorporating bounds on both TPR and FPR. An intuitive approach to account for TPR bounds would be to define a TPR-based partial AUC constructed in a completely analogous fashion as the FPR-based partial AUC. While the latter is an area with left and right boundaries in parallel to the ordinate of the ROC diagram, the TPR-based partial AUC's uper and lower boundaries are in parallel to the abcissa \cite[Fig. 1]{carrington2020}. A natural summary measure reconciling a given set of TPR and FPR constraints would then be the arithmetic mean of the corresponding TPR-based and FPR-based partial AUCs. This idea was advocated by Carrington et al. \cite{carrington2020}, who called the so defined parameter \textit{concordant partial AUC}. Conceptually superior to the mere FPR-based partial AUC by virtue of its ability to incorporate TPR constraints as well, the concordant partial AUC still formally requires to specify an FPR lower bound and a TPR upper bound.}

{The approach proposed by Yang et al. \cite{yang2019} circumvents this requirement. 
They solely specify a lower bound for the TPR and an upper bound for the FPR to define a clinically relevant ROC curve segment---the TPR lower bound determines the segment's lower-left end, the FPR upper bound defines the upper-right end. A suitably defined area under the resulting ROC curve segment could then constitute a potential parameter of interest (see Fig. \ref{fig:pAUC}). We refer to this parameter simply as \textit{partial AUC}. Our general objective here shall be to analyze the  partial AUC's viability to serve as a parameter of interest in diagnostic studies and to contrast it with the currently used standard parameter, the total AUC. The concordant partial AUC, a natural alternative to the total AUC in its own right, will not be further discussed in this paper. We defer its treatment to future publications, as an in-depth analysis of its statistical properties is certainly informative and complex enough to fill a paper of its own.}  

The properties of the partial AUC as a statistical parameter, i.e. how it can be inferred upon, are still largely unknown. The only methodological study about this issue is the initial paper by Yang \cite{yang2019} in which the partial AUC was introduced. They proved a large sample result for estimating the difference of two partial AUCs and thereupon constructed a statistical test for comparing two biomarkers. However, statistical tests that are fitting for more complex research questions, e.g. arising in the context of a factorial study, are not available yet. Another point that was brought up in the initial paper was a heuristic argument that the partial AUC should be a more efficient biomarker performance measure than the total AUC in the sense that the former helps to uncover biomarker performance differences with a higher statistical power than the latter. Their computer simulations seem to support this claim, but a theoretical discussion of the phenomenon is still to be given. 

We thus identify two steps that should be undertaken order to qualify the partial AUC as a serious alternative to the total AUC in diagnostic studies: (1) to provide a technique for multiple comparisons of partial AUCs in diagnostic studies with elaborate factorial designs, and (2) a theoretical discussion if and when the partial AUC enjoys better statistical properties than the total AUC. With this article we address both problems. First, we show that the multiple contrast test principle can be successfully applied in connection with a nonparametric estimator of the partial AUC. To this end, we derive the asymptotic distribution of this estimator in a general multivariate setting and show that we can consistently approximate its distribution using Efron's bootstrap. We do not make any assumption on the underlying distribution of the observed marker values or their covariance structure. As a result, we can present an asymptotically correct nonparametric multiple contrast test procedure for simultaneously testing families of arbitrary linear hypotheses about partial AUCs. The total AUC is only a special case of the partial AUC, and our multiple contrast test can therefore as well be applied to the total AUC. This enables us to take on our second goal: a comparison of the total and partial AUC multiple contrast test in terms of their power. We will see that the partial AUC test is indeed often more efficient than the total AUC test in the sense that the former achieves a higher power for a given sample size and effect.

In Section \ref{sec:param_hyp}, we will formally introduce the partial AUC as a statistical parameter and define a multiple testing problem about it. In Section \ref{sec:estimator_test}, we introduce a nonparametric estimator of the partial AUC and construct a multiple contrast test procedure for families of linear hypotheses about the partial AUCs. Section \ref{sec:theoy} states important theoretical results regarding the asymptotic properties of the proposed estimator and test {procedure}, including the estimator's asymptotic distribution and the test {procedure's} ability to asymptotically control the family-wise type I error rate. In Section \ref{sec:numerical}, we present the results of a series of computer experiments and some results on the above mentioned data example to illustrate the properties of the partial AUC and our test procedure. We will also produce empirical evidence in support of the conjecture that inference about the partial AUC is more efficient than inference about the total AUC. In Section 6, we discuss how the results of the computer experiments can be interpreted and explained from a theoretical perspective. We conclude the paper with some remarks about the practical implications of our findings, especially concerning the question of how to choose the clinically relevant ROC curve segment that defines the partial AUC.

\section{The statistical model---parameters and hypotheses}\label{sec:param_hyp}

In this paper we focus on diagnostic trials in which the same population of subjects supplies the marker observations at each combination of factor levels. The typical example is a multi-reader trial comparing several biomarkers, where each subject's marker measurements are interpreted by a fixed number of different readers. A standard assumption in the modelling of diagnostic trials is furthermore the availability of a diagnostic gold standard allowing us to divide the population of participating subjects into two subgroups, one of $\alpha$ non-diseased subjects and one of $\beta$ diseased subjects. If we have a total of $\kappa$ factor level combinations, we end up with the following statistical model. Corresponding to each factor level combination $i \in \{1,\ldots,\kappa\}$, we have a distribution of marker values in the population of diseased subjects, $P^{(1)}_{i}$, and correspondingly a distribution of marker values in the population of non-diseased subjects, $P^{(0)}_{i}$. For each non-diseased subject $r \in \{1,\ldots,\alpha\}$, we observe $\kappa$ different marker values which we aggregate in a $\kappa$-dimensional real vector. This vector is interpreted as the realization of a random vector $\xi_r = \big(\xi_r(1), \ldots, \xi_r(\kappa)\big)^\top \sim P^{(0)}$ whose marginal distributions are $P^{(0)}_1, \ldots, P^{(0)}_\kappa$. Analogously, the observed marker values in a diseased subject $s \in \{1,\ldots,\beta\}$ are realizations of a random vector $\eta_s = \big(\eta_s(1),\ldots,\eta_s(\kappa)\big)^\top \sim P^{(1)}$ with marginals $P^{(1)}_1, \ldots, P^{(1)}_\kappa$. While it is reasonable to assume that $\sigma\{\xi_1,\ldots,\xi_\alpha\}$ and $\sigma\{\eta_1,\ldots,\eta_{_\beta}\}$ are independent \citep{Konietschke2018, Lange2012}, we do not impose any assumption regarding the structure of the distributions $P^{(1)}$ and $P^{(0)}$, making the model entirely nonparametric. We summarize our model assumptions in Definition \ref{def:model}. 

\begin{definition}\label{def:model}
We consider two sequences of random vectors, $\{\xi_r\}_{r \in \Nbb_+}$ and $\{\eta_s\}_{s \in \Nbb_+}$, with $(\xi_r(1), \ldots, \xi_r(\kappa))^\top \sim P^{(0)}$ and $(\eta_s(1), \ldots, \eta_s(\kappa))^\top \sim P^{(1)}$. The cumulative distribution functions of $P^{(0)}$ and $P^{(1)}$ are denoted by $F$ and $G$, respectively. Let $P^{(0)}_i$ and $P^{(1)}_i$ be the marginal distributions of $\xi_r(i)$ and $\eta_r(i)$, respectively, and let $F_i$ and $G_i$ denote the corresponding cumulative distribution functions. We assume that $\sigma\{\xi_r \colon r \in \Nbb_+\}$ and $\sigma\{\eta_s \colon s \in \Nbb_+\}$ are independent. For $\alpha, \beta \in \Nbb_+$, our observations are realizations of $\xi_1,\ldots,\xi_\alpha$ and $\eta_1,\ldots,\eta_{_\beta}$.  \qed
\end{definition}

For each combination of factor levels, we have a ROC curve $\varrho_i$ characterizing the corresponding marker performance. These curves are formally given by the parametrization $\varrho_i = \big\{ \varrho_i(t) = \big(1-F_i(t)\,,\, 1-G_i(t)\big) \colon t \in \Rbb\big\}$. 
Following Yang et al. \cite{yang2019}, we define the clinically relevant segments of the curves $\varrho_i$ by stipulating a maximal acceptable false positive probability $p$ and a minimal acceptable true positive probability $q$. The relevant segment of $\varrho_i$ is then the collection of all points $\varrho_i(t)$ satisfying $1-F_{i}(t) \leq p$ and $1-G_i(t) \geq q$. A graphical illustration is given in Figure \ref{fig:pAUC}. The partial area under $\varrho_i$ that defines the parameter of our interest is the area surrounded by the curves 
\begin{gather*}
 C_{i,1}   = \big\{\big(1-F_i(t),1-G_i(t)\big) \colon F_i^{-1}(1-p) \leq t \leq G_i^{-1}(1-q) \big\} \,,\\ 
C_{i,2} = \big\{ (t,q) \colon 1-F_i(G_i^{-1}(1-q)) \leq t \leq p \big\}\,, \qquad
C_{i,3} = \big\{ (p,t) \colon q \leq t \leq 1-G_i(F_i^{-1}(1-p)) \big\} \,,
\end{gather*}     
implicitly presupposing that $F_i$ and $G_i$ are continuous and strictly increasing.

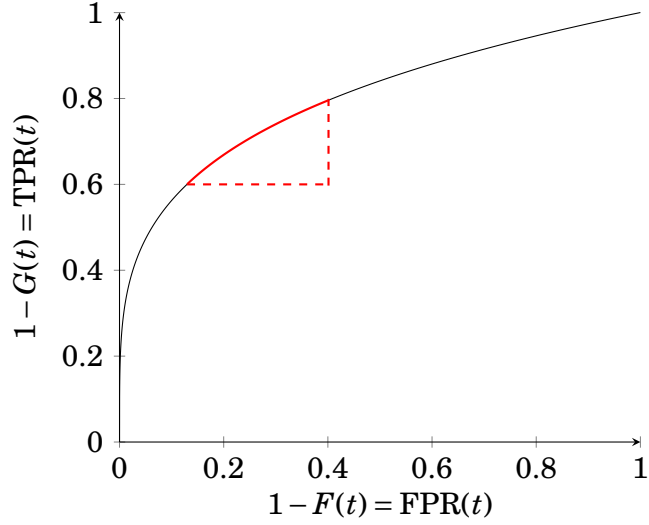
\begin{figure}
 \centering
\begin{tikzpicture}
\begin{axis}[
    axis lines = left,
   xlabel = {$1-F(t) = \mathrm{FPR}(t)$},
    ylabel = {$1-G(t) = \mathrm{TPR}(t)$},
]
\addplot [
    domain=0:(0.6)^4, 
    samples=1000, 
    color=black,
]
{x^(1/4)};
\addplot [
    domain=0.4:1, 
    samples=1000, 
    color=black,
]
{x^(1/4)};
\addplot [
    domain=(0.6)^4:0.4, 
    samples=1000, 
    color=red,
    thick
    ]
    {x^(1/4)};
\addplot [
	dashed, thick,
    domain=(0.6)^4:0.4, 
    samples=1000, 
    color=red,
    ]
    {0.6};
\end{axis}
\draw[red, dashed, thick] (2.75,3.4) -- (2.75,4.55);
\end{tikzpicture}
\caption{The clinically relevant segment (red) of a receiver operating characteristic curve for the FPR upper bound $p=0.4$ and TPR lower bound $q = 0.6$. Together with the curves $C_2$ and $C_3$, depicted as dashed lines, it defines the partial AUC.}
\label{fig:pAUC}
\end{figure}

In some sense the partial area under the curve is a scaled down version of the total area under the curve and therefore its natural extension to boundaries $q > 0$ or $p < 1$. With Green's Theorem we infer that the so defined area is equal to 
\begin{align*}
\int_{(a,b]} \big(G_i(b) - G_i(u)\big) \ \dif F_i(u)\,,
\end{align*} 
with $a = F_i^{-1}(1-p)$ and $b = G_i^{-1}(1-q)$. Using the notion of the generalized quantile function $H^{-1}(p) = \inf\{t \in \Rbb \colon H(t) \geq p\}$ of a distribution function $H$, the expression in the preceding display is also well defined for piece-wise constant functions with jumps. We will therefore use it to formally define multivariate area under the curve parameters. 
In the following, for any $q \in \Nbb_+$, $\{e_1,\ldots,e_q\}$ denotes the canonical basis and $\langle \; \cdot \;,\;\cdot\;\rangle$ the standard inner product in $\Rbb^q$. 

\begin{definition}\label{def:parameter}
{ For $0 < p \leq 1$\,, $0 < q \leq 1$\,,} and $i \in \{1,\ldots,\kappa\}$, we define $a_i = F_{i}^{-1}(1-p)$ and $b_i = G_{i}^{-1}(1-q)$. { If $p,q \in (0,1)$,} we additionally assume that for some $\varepsilon > 0$ each $F_i$ and $G_i$ is continuously differentiable
in $(a_i-\varepsilon,a_i+\varepsilon) \cup (b_i-\varepsilon,b_i+\varepsilon)$. Their derivatives at a point $x$ are denoted $\partial_x F_i(x) =  f_i(x)$ and $\partial_x G_i(x) =  g_i(x)$, respectively. { If $a_i, b_i \in \Rbb$,} we { also} assume that $f_i(a_i) \wedge g_i(a_i) \wedge f_i(b_i) \wedge g_i(b_i) > 0$. The partial area under the curve parameter of the pair of distributions $\big(P^{(0)},P^{(1)}\big)$ is 
\begin{align*}
\theta_{p,q} &= \sum_{i=1}^\kappa e_i \ \int_{(a_i,b_i]} \big(G_{i}(b_i) - G_i(u)\big) \ \dif F_i(u)\,. \tag*{\qed}
\end{align*}
\end{definition}

\noindent
{If we interpret $F_i$ and $G_i$ as functions defined on the extended real line and additionally stipulate that $\inf \emptyset = \infty$, then the boundary cases $q=0$ or $p=0$ are allowed as well. This includes the important special case where $p=1$ and $q=0$, yielding the total area under the curve,}
\begin{align*}
    \theta_{1,0}[P] = \sum_{i=1}^\kappa e_i \ \int_{\overline{\Rbb}} \big(1 - G_{i}(u)\big) \ \dif F_{i}(u)\,.
\end{align*}

Under the assumption that $\sigma\{\eta_1,\ldots,\eta_{_{\beta}}\}$ and $\sigma\{\xi_1,\ldots,\xi_{\alpha}\}$ are independent, a well known representation of the total area under the curve is $\theta_{0,1} = \sum_{i=1}^\kappa e_i \ \pr \big\{ \xi_1(i) < \eta_1(i) \big\}$.
A similar relation holds for the partial area under the curve. Presupposing again independence of the diseased and non-diseased populations, we see that
\begin{align*}
    \theta_{p,q} = \sum_{i=1}^\kappa e_i \ \pr \big\{a_i < \xi_1(i) < \eta_{1}(i) \leq  b_i \big\}\,.
\end{align*}
Furthermore, the partial AUC is invariant under monotone transformations. For a strictly increasing real function $m$, let $F_{m(\xi_1(i))}$ and $G_{m(\eta_1(i))}$ be the distribution function of $m(\xi_1(i))$ and $m(\eta_1(i))$, respectively. Then $F_i(t) = F_{m(\xi_1(i))}(m(t))$ and $G_i(t) = F_{m(\eta_1(i))}(m(t))$, yielding
\begin{align*}
& \pr\big\{ F_{i}^{-1}(1-p) < \xi_{1}(i) < \eta_{1}(i) \leq G_i^{-1}(1-q) \big\} \\
& \hspace{3cm}    = \pr\big\{ F_{m(\xi_{1}(i))}^{-1}(1-p) < m(\xi_{1}(i)) < m(\eta_{1}(i)) \leq G_{m(\eta_{1}(i))}^{-1}(1-q) \big\}\,.
\end{align*}

\section{A nonparametric estimator and multiple contrast test}\label{sec:estimator_test}

What kind of statistical hypotheses about the parameter $\theta_{p,q}$ are we likely to face in a diagnostic study? Perhaps the most frequently posed research questions in the context of factorial experiments concern the presence of main and interaction effects of the involved factors on the parameter of interest. Other examples include Dunnett-type many-to-one comparisons, where we compare each treatment to a fixed reference treatment, or Tukey-type all pairs comparisons, where we conduct a series of pairwise comparisons between treatments.   
All of those examples have in common that they lead to multiple testing problems involving families of linear hypotheses, i.e. they are special cases of the multiple testing problem    
\begin{align}\label{eq:hypotheses}
H_0^i \colon \langle c_i , \theta \rangle = 0  \qquad \text{vs.} \qquad H_1^i \colon \langle c_i , \theta \rangle \neq 0\,, \qquad i\in \{1,\ldots,r\}\,
\end{align}   
where $\theta$ is the vector aggregating the values of the parameter of interest at each factor level and $c_1,\ldots,c_r$ are contrast vectors of according dimension \cite{mukerjee1987,akritas1994,brunner2017}. The matrix $C$ with columns $c_1,\ldots,c_r$ is the contrast matrix characterizing the multiple testing problem. { The form of this testing problem is closely related to Scheffé's method of testing all possible contrasts for sample means ($\theta = \mu$) in an ANOVA setting (see e.g. \cite[p. 81]{dean1999})}. As our focus is on factorial diagnostic trials with the partial AUC as the parameter of interest, we shall investigate the testing problem \eqref{eq:hypotheses} for $\theta = \theta_{p,q}$.

Konietschke et al. \cite{Konietschke2018} discussed this problem for the total AUC (i.e. $\theta = \theta_{1,0}$) and proposed a nonparametric multiple contrast test procedure to deal with it. They proved that the proposed test asymptotically controls the family-wise type I error rate. However, an application to the partial AUC, i.e. the general case $p,q \in [0,1]$, was beyond the scope of their results. { Here} we will close this gap. Based on a nonparametric estimator $\hat{\theta}_{p,q;n}$ of $\theta_{p,q}$, we will construct a family of tests $\{\varphi_{i} \colon i = 1,\ldots,r\}$ designed to simultaneously test any selection of hypotheses $H_0^{i_1},\ldots,H_0^{i_m}$, $1 \leq i_1 < \ldots < i_m \leq r$. Thereafter, we will present a general asymptotic theory of the involved estimator ensuring that our test family asymptotically controls the family-wise type I error rate in the strong sense.    

Each test { of the family $\big\{\varphi_{i} \colon i \in \{1,\ldots,r\}\big\}$} will be based on a nonparametric estimator $\hat{\theta}_{p,q;n}$ of $\theta_{p,q}$, and the family will be constructed { following} the well established multiple contrast test principle \citep{bretz2001numerical, konietschke2012}. The selection of the acceptance regions will be based on quantiles of the joint distribution of the involved test statistics, which we will approximate using the studentized empirical bootstrap. { While critical values based on the normal approximation tend to be rather liberal, the bootstrap} approach often achieves higher order accuracy, { and it appears to be better suited for finite sample sizes} (see e.g. \citep{mammen1992, hall2013}). Working with joint distributions ensures that we do not lose statistical information that may be carried by the structure of the dependencies between the individual test statistics. This promises to make the multiple contrast test approach more powerful than traditional methods based on p-value corrections, which tend to ignore such information \citep{konietschke2012}. Furthermore, it is a suitable basis for the construction of compatible simultaneous confidence intervals.

\begin{definition}\label{def:estimator}
For $\{\alpha_n\}_{n \in \Nbb}, \{\beta_n\}_{n \in \Nbb} \subset \Nbb_+$, $i \in \{1,\ldots,\kappa\}$, $u \in \Rbb$, and $n \in \Nbb$, let $\mathds{F}_{i,n}(u) = \frac{1}{\alpha_n} \sum_{r=1}^{\alpha_n} \ind\{\xi_{r}(i) \leq u\}$ and $\hat{a}_{i,n} = \mathds{F}_{i,n}^{-1}(1-p)$. Analogously we define $\mathds{G}_{i,n}(u)$ and $\hat{b}_{i,n} = \mathds{G}_{i,n}^{-1}(1-q)$. The estimator $\hat{\theta}_{p,q;n}$ of $\theta_{p,q}$ is then
\begin{align*}
\hat{\theta}_{p,q;n}
 &= 
\sum_{i=1}^\kappa e_i \ \int_{(\hat{a}_{i,n},\hat{b}_{i,n}]} \big(\mathds{G}_{i,n}(\hat{b}_{i,n}) - \mathds{G}_{i,n}(u)\big) \ \dif \mathds{F}_{i,n}(u) \,. \tag*{\qed}
\end{align*}  
\end{definition}

Each component $\hat{\theta}_{p,q}(i)$ of $\hat{\theta}_{p,q}$ is a trimmed Mann--Whitney statistic. This can be seen as follows. Let for simplicity $\kappa = 1$, and denote with $\eta_{(r)}$ and $\xi_{(r)}$ the $r$th order statistics of the samples $\{\eta_{1},\ldots,\eta_{_{\beta_n}}\}$ and $\{\xi_{1},\ldots,\xi_{\alpha_n}\}$, respectively. Then
\begin{align}
&\int_{(\hat{a}_{1,n},\hat{b}_{1,n}]} \big(\mathds{G}_{1,n}(\hat{b}_{i,n}) - \mathds{G}_{1,n}(u)\big) \ \dif \mathds{F}_{1,n}(u) \nonumber \\
&= \frac{1}{\alpha_n} \sum_{r=1}^{\alpha_n} \ind\big\{ \xi_{(\lceil n(1-p)\rceil)} < \xi_{r} 
\leq \eta_{(\lceil n(1-q)\rceil)}  \big\} 
\Big[ \frac{1}{\beta_n} \sum_{s=1}^{\beta_n} \ind\big\{ \eta_{s} \leq \eta_{(\lceil n(1-q)\rceil)}\big\} - \ind\big\{ \eta_{s} \leq \xi_{r}\big\} \Big] \nonumber \\
&= \frac{1}{\alpha_n\beta_n} \sum_{r=1}^{\alpha_n} \sum_{s=1}^{\beta_n} \ind\big\{ \xi_{(\lceil n(1-p)\rceil)} < \xi_{r} < \eta_{s} \leq \eta_{(\lceil n(1-q)\rceil)}  \big\} \nonumber \\
&= \frac{1}{\alpha_n\beta_n} \sum_{r=\lceil \alpha_n(1-p) \rceil +1}^{\alpha_n} \sum_{s=1}^{\lceil \beta_n(1-q) \rceil} \ind\big\{ \xi_{(r)} < \eta_{(s)}\big\} \,.
\label{eq:trimmed_MW}
\end{align} 
We see that setting $(p,q)=(1,0)$ results in the usual untrimmed Mann--Whitney estimator of $\theta_{1,0}$,
\begin{gather*}
\hat{\theta}_{1,0;n} = \frac{1}{\alpha_n\beta_n} \sum_{r=1}^{\alpha_n}\sum_{s=1}^{\beta_n} \ind\big\{ \xi_{1,(r)} < \eta_{1,(s)}\big\}\,.
\end{gather*}

\begin{definition}\label{def:cov_est}
For $i,j \in \{1,\ldots,\kappa\}$ and $x,y \in \Rbb$, let 
\begin{align*}
\mathds{F}_n\big(k_{i,j}(x,y)\big) &= \frac{1}{\alpha_n} \sum_{r=1}^{\alpha_n} \ind\big\{ \xi_r({i}) \leq x\,,\,\xi_r({j}) \leq y \big\}\,,  \\
\mathds{G}_n\big(k_{i,j}(x,y)\big) &= \frac{1}{\beta_n}  \sum_{s=1}^{\beta_n}  \ind\big\{ \eta_s({i}) \leq x\,,\,\eta_s({j}) \leq y \big\} \,.
\end{align*}
We define the matrix-valued statistic $\hat{\Sigma}_{p,q;n} = (\hat{\sigma}_{p,q;n}(i,j))_{i,j = 1}^\kappa$ by
\begin{align*}
    \hat{\sigma}_{p,q;n}(i,j) 
    &=
    \frac{\alpha_n+\beta_n}{\beta_n^2\alpha_n}  \sum_{s_1,s_2=1}^{\beta_n} \ind\big\{\hat{a}_{i,n} < \eta_{s_1(i)} \leq \hat{b}_{i,n}\big\} \, \ind\big\{\hat{a}_{j,n} < \eta_{s_2(j)} \leq \hat{b}_{j,n}\big\} \\
    & \hspace{4cm} \big[ \mathds{F}_n\big(k_{i,j}(\eta_{s_1}(i),\eta_{s_2}(j))\big) - \mathds{F}_{i,n}(\eta_{s_1}(i))\mathds{F}_{j,n}(\eta_{s_2}(j)) \big] \\
    &+ \frac{\alpha_n+\beta_n}{\alpha_n^2\beta_n} \sum_{r_1,r_2=1}^{\alpha_n} \ind\big\{\hat{a}_{i,n} < \xi_{r_1(i)} \leq \hat{b}_{i,n}\big\} \, \ind\big\{\hat{a}_{j,n} < \xi_{r_2(j)} \leq \hat{b}_{j,n}\big\} \\
    & \hspace{4cm} \big[ \mathds{G}_n\big(k_{i,j}(\xi_{r_1}(i),\xi_{r_2}(j))\big) - \mathds{G}_{i,n}(\xi_{r_1}(i))\mathds{G}_{j,n}(\xi_{r_2}(j)) \big] \,. \tag*{\qed}
\end{align*}
\end{definition}

It will turn out soon that $|\hat{\theta}_{p,q;n} - \theta_{p,q}| \to 0$ in probability. Hence, it seems sensible to test the hypothesis $H_0^{i} \colon \langle c_i\,,\, \theta_{p,q} \rangle = 0$ by assessing the magnitude of 
\[(\alpha_n + \beta_n)^{1/2} \langle c_i\,,\,\hat{\theta}_{p,q;n}\rangle \big/ \big(\var( \langle c_i\,,\,\hat{\theta}_{p,q;n}\rangle)\big)^{1/2}.\] 
Of course, $\var \big( \langle c_i\,,\,\hat{\theta}_{p,q;n}\rangle \big)$ is unknown, but we will see that $\sum_{r,s=1}^\kappa c_i(r)c_i(s)\hat{\sigma}_{p,q;n}(r,s)$ is a consistent estimator of the asymptotic variance of $(\alpha_n + \beta_n)\, \big(\langle c_i\,,\,\hat{\theta}_{p,q;n}\rangle - \langle c_i\,,\,\theta_{p,q}\rangle\big)$. 

\begin{definition}
The boostrap samples $\{\xi_1^*,\ldots,\xi_{\alpha_n}^*\}$ and $\{\eta_1^*\ldots,\eta_{\beta_n}^*\}$ are drawn with replacement from the original samples $\{\xi_1,\ldots,\xi_{\alpha_n}\}$ and $\{\eta_1,\ldots,\eta_{_{\beta_n}}\}$, respectively, such that bootstrap and original samples are independent. If we apply Definitions \ref{def:estimator} and \ref{def:cov_est} to the bootstrap samples instead of the original samples, we obtain the bootstrap versions of $\hat{\theta}_{p,q;n}$ and $\hat{\Sigma}_{p,q;n}$, to which we refer with $\hat{\theta}^*_{p,q;n}$ and $\hat{\Sigma}^*_{p,q;n}$, respectively.  \qed
\end{definition}
\noindent
At last we define the test family for our multiple testing problem. It is designed to keep the family-wise type I error rate asymptotically below a prescribed level $\delta \in (0,1)$.

\begin{definition}\label{def:test} Let $p,q \in (0,1)$ or $(p,q) = (1,0)$, and let $V_{p,q;n}$ and $V_{p,q;n}^*$ be two diagonal matrices with $\mathrm{diag}(V_{p,q;n}) =  \mathrm{diag}\big(C^\top\hat{\Sigma}_{p,q;n} C\big)$ and $\mathrm{diag}(V_{p,q;n}^*)=\mathrm{diag}(C^\top\hat{\Sigma}_{p,q;n}^* C)$. We define the test statistic $T_n = (\alpha_n + \beta_n)^{1/2} \ V_{p,q;n}^{-1/2}C^\top\hat{\theta}_{p,q;n}$ and the bootstrap statistic $S_n^* = (\alpha_n + \beta_n)^{1/2} \ (V_{p,q;n}^*)^{-1/2} \big(C^\top\hat{\theta}_{p,q;n}^* - C^\top\hat{\theta}_{p,q;n}\big)$. Let
\begin{align*}
q(S_n^*;x)    = \inf \Big\{t \geq 0 \colon \pr \big\{ \| S_n^*\|_\infty \leq t \bigm| \xi_1, \ldots,\xi_{\alpha_n},\eta_1,\ldots,\eta_{_{\beta_n}}  \big\} \geq x \Big\}
\end{align*}
denote the equi-coordinate quantile of $S_n^*$. To test the hypothesis $H_0^i$, we use the decision function
\begin{align*}
\varphi_{i,\delta}(p,q)[\xi_1, \ldots, \xi_{\alpha_n},\eta_1,\ldots,\eta_{_{\beta_n}}]
&= 
\begin{cases}
0 & \text{ if } \qquad \big|\langle T_{n}\,,\, e_i \rangle \big| \leq q(S_n^*;1-\delta) \\
1 & \text{ else.}
\end{cases}
\end{align*}
We further define $\varphi_\delta(p,q) = \max\big\{\varphi_{i,\delta}(p,q) \colon i \in \{1,\ldots,r\}\big\}$ and call it \textit{partial AUC maximum test} if $p \neq  0$ or $q \neq 1$. Otherwise it is called \textit{total AUC maximum test}.  \qed
\end{definition}

\section{Theoretical main results}\label{sec:theoy}

\begin{definition}\label{def:emp_proc}
For $\mathds{T} \subset \overline{\Rbb} \:= \Rbb \cup \{ \pm \infty \}$, the space $(\ell_\infty(\mathds{T}),d_\infty)$ is the set of all bounded functions $g \colon \mathds{T} \to \Rbb$, equipped with the uniform metric, $d_\infty(g,h) := \sup_{t \in \mathds{T}} |g(t) - h(t)|$. Let $(\ell_\infty^{2\kappa}(\mathds{T}),d_\infty^{2\kappa})$ denote the ${2\kappa}$-fold product space $\ell_\infty(\mathds{T}) \times \ldots \times \ell_\infty(\mathds{T})$ equipped with the metric 
\begin{align*}
d_\infty^{2\kappa}\big((x_1,\ldots,x_{2\kappa}),(y_1,\ldots,y_{2\kappa})\big) := \max_{i\in \{1,\ldots,{2\kappa}\}} d_\infty(x_i,y_i)\,.
\end{align*}
Henceforth, $\ell_\infty^{2\kappa}$ shall denote $\ell_\infty^{2\kappa}(\overline{\Rbb})$.
For any $n \in \Nbb_+$, let $X_n$ be an $(\ell_{\infty}^{2\kappa},d^{2\kappa}_\infty)$-valued random element defined by
\begin{align*}
t \mapsto X_n(t) = \sum_{i=1}^{\kappa} \Big[  e_{i} \ \alpha_n^{1/2} \,\big(\mathds{F}_{i,n}(t)- F_i(t) \big)  +  e_{\kappa +i} \ \beta_n^{1/2} \,\big(\mathds{G}_{i,n}(t)- G_i(t) \big) \Big]\,. \tag*{\qed}
\end{align*}
\end{definition}

\begin{theorem}\label{thm:convergence1}
Let $H(x_1,\ldots,x_\kappa,y_1,\ldots,y_\kappa) = F(x_1,\ldots,x_\kappa)\,G(y_1,\ldots,y_\kappa)$ be the cumulative distribution function of $\big(\xi_1(1),\ldots, \xi_1(\kappa),\eta_1(1), \ldots, \eta_1(\kappa)\big)$.
Then, as $n \to \infty$, the sequence $\{X_n\}_{n \in \Nbb}$ converges weakly in $(\ell_\infty^{2\kappa},d^{2\kappa}_\infty)$ to a tight $(\cf-\cb(\ell_\infty^{2\kappa}))$-measurable centered gaussian random element $X_0 = (B_1,\ldots,B_{2\kappa})$ with covariance function
\begin{align*}
\cov\big(B_i(u),B_j(v)\big) =
\begin{cases}
F\big( k_{i,j}(u,v) \big) - F_i(u)F_j(v) \hspace{.75cm} &\text{ if } \quad s,t \in \Rbb; \ i,j \in \{1,\ldots,{\kappa}\}\,,\\
G\big( k_{i,j}(u,v) \big) - G_i(u)G_j(v) \hspace{.75cm} &\text{ if } \quad s,t \in \Rbb; \ i,j \in \{\kappa+1,\ldots,{2\kappa}\},,\\
{ 0} \hspace{.75cm} &\text{ if } \quad s,t \in \Rbb; \ i \in \{1,\ldots,{\kappa}\}\,, j \in \{\kappa+1, \ldots, 2\kappa\}\,,
\end{cases}
\end{align*}
where $k_{i,j}(s,t)$ denotes a vector of suitable dimension satisfying 
\begin{gather*}
    \big\langle k_{i,j}(u,v)\,,\, e_k \big\rangle = 
    \begin{cases}
    u           &\text{ if } \quad k=i \neq j\,, \\ 
    v           &\text{ if } \quad k=j \neq i\,, \\
    u \wedge v  \quad &\text{ if } \quad k = i = j\,, \\
    \infty      & \text{ if } \quad k \notin \{i,j\}\,. 
    \end{cases}
\end{gather*}
\end{theorem}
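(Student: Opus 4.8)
The plan is to reduce the claimed weak convergence to a standard empirical process statement and then assemble the blocks correctly. First I would observe that $X_n$ is nothing but the stacked empirical process built from the two independent i.i.d.\ samples $\{\xi_r\}$ and $\{\eta_s\}$, evaluated over the function class of indicators $\{x \mapsto \ind\{x(i) \le t\} \colon i \in \{1,\ldots,\kappa\},\, t \in \overline{\Rbb}\}$. This class is a finite union of the classes $\{(-\infty,t] \colon t \in \overline{\Rbb}\}$ applied to each coordinate, hence it is a VC class (in fact Donsker), being a finite union of Donsker classes; alternatively one can invoke the multivariate version of Donsker's theorem for empirical distribution functions directly. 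Working on $\overline{\Rbb}$ rather than $\Rbb$ is harmless since the indicator functions extend continuously (as $0$ at $-\infty$ and $1$ at $+\infty$), and the resulting process has bounded, indeed $[-1,1]$-scaled, sample paths, so it lives in $\ell_\infty^{2\kappa}$.

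Next I would handle the two samples separately and then combine. For the diseased block I would apply the functional central limit theorem to $\beta_n^{1/2}(\mathds{G}_{i,n} - G_i)$ jointly over $i \in \{1,\ldots,\kappa\}$; this yields weak convergence in $\ell_\infty^\kappa$ to a tight centered Gaussian process whose covariance at $(u,v)$ for coordinates $i,j$ is exactly $\mathrm{Pr}\{\eta_1(i) \le u, \eta_1(j) \le v\} - G_i(u)G_j(v) = G(k_{i,j}(u,v)) - G_i(u)G_j(v)$, using the convention encoded by $k_{i,j}$ (the $\infty$ entries corresponding to unconstrained coordinates, and $u \wedge v$ when $i=j$). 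The analogous statement holds for the non-diseased block with $F$ in place of $G$. Since $\sigma\{\xi_r \colon r\}$ and $\sigma\{\eta_s \colon s\}$ are independent by Definition~\ref{def:model}, the joint weak limit of the $2\kappa$-tuple is the independent coupling of the two Gaussian limits, which forces the cross-covariances between an $F$-block coordinate and a $G$-block coordinate to vanish—this is precisely the third case in the covariance display. Tightness of the product is immediate from tightness of the two factors, and $(\cf\text{--}\cb(\ell_\infty^{2\kappa}))$-measurability of the limit follows from separability of the support of the limiting law (the limit concentrates on the separable subspace of functions that are, coordinatewise, differences of bounded càdlàg functions).

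The one genuinely delicate point—and the step I expect to be the main obstacle—is the bookkeeping with the possibly unequal sample sizes $\alpha_n$ and $\beta_n$. The theorem as stated does not (in this excerpt) fix the ratio $\alpha_n/(\alpha_n+\beta_n)$; for the \emph{marginal} convergence of each block this does not matter, since $\alpha_n^{1/2}(\mathds{F}_{i,n}-F_i)$ converges as long as $\alpha_n \to \infty$, and likewise for $\beta_n$. So the cleanest route is to prove the two block convergences under only $\alpha_n \to \infty$ and $\beta_n \to \infty$, note that the limiting covariance functions written in the statement do not involve the ratio at all, and then glue via independence. If one instead wanted the combined process normalized by a common $(\alpha_n+\beta_n)^{1/2}$ (as appears later in $T_n$), the ratio would enter as a scalar multiple; but the present statement uses the natural self-normalization $\alpha_n^{1/2}$ and $\beta_n^{1/2}$ inside each block, so no ratio assumption is needed here. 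I would close the argument by citing the standard Donsker theorem for empirical processes indexed by a VC class (e.g.\ the multivariate empirical CDF being Donsker), applied once per sample, together with the elementary fact that weak convergence of two independent sequences implies joint weak convergence of the pair to the product law.
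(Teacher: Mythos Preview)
Your argument is correct, and it takes a genuinely different route from the paper's. You treat the $F$-block and the $G$-block as two separate empirical processes, invoke the Donsker property of the indicator class (a finite union of one-dimensional half-line classes, hence VC with bounded envelope), and then couple the two weak limits via the assumed independence of the samples. This is the modern, off-the-shelf empirical-process argument, and it handles the possibly different sample sizes $\alpha_n$ and $\beta_n$ cleanly, since each block carries its own normalization and only $\alpha_n\to\infty$, $\beta_n\to\infty$ are needed.

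The paper instead proceeds by a quantile-transform/copula construction: it represents $(\xi_1,\eta_1)$ through a copula $C$ with uniform marginals, builds the corresponding $2\kappa$-variate uniform empirical process $U_n$, establishes its weak convergence in $\ell_\infty^{2\kappa}([0,1))$ by combining marginal Donsker results with a Prohorov-type tightness/subsequence argument and identification of finite-dimensional limits, and finally pushes the limit forward through the continuous map $(x_1,\ldots,x_{2\kappa})\mapsto (x_1\circ F_1,\ldots,x_{2\kappa}\circ G_\kappa)$. What the paper's route buys is an explicit representation of $X_0$ as the image of a standard $2\kappa$-variate Brownian bridge on $[0,1)$, which can be convenient downstream; what your route buys is brevity and a transparent treatment of the two-sample structure without the copula and isometry bookkeeping. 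The only point where you should be slightly more explicit is the joint-convergence-under-independence step in the Hoffmann--J\o rgensen framework: cite the relevant permanence lemma (marginal asymptotic tightness implies joint asymptotic tightness, and independent finite-dimensional marginals determine the product limit) rather than calling it elementary.
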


\begin{definition}
Let $D(\overline{\Rbb})$ be the set of right continuous real functions with left limits, and $\mathds{D}$ the set of pairs $(\vartheta_1,\vartheta_2) \in D(\overline{\Rbb})\times D(\overline{\Rbb})$ such that $\vartheta_1$ has variation bounded by 1. Furthermore, set $\mathds{D}_1 = \mathds{D}^{2\kappa}$, and $\mathds{D}_2 = \mathds{D} \times \Rbb^2$. For any real function $\vartheta$, let $\|\vartheta\|_\infty = \sup_{x \in \mathrm{Dom}(\vartheta)} |\vartheta(x)|$. We consider the norms $\|\cdot\|_{\mathds{D}_1}$ and $\|\cdot\|_{\mathds{D}_2}$ given by 
\begin{align*}
\|(\vartheta_1,\ldots,\vartheta_{2\kappa})\|_{\mathds{D}_1} &= \max_{i \in  \{1, \ldots, 2\kappa\}} \|\vartheta_i\|_\infty\,,\\
\|(\vartheta_1,\vartheta_2,x_1,x_2)\|_{\mathds{D}_2} &= \max \big\{\|\vartheta_1\|_\infty, \, \|\vartheta_2\|_\infty,\, |x_1|,\,|x_2|\big\}\,.
\end{align*}
For $p,q \in (0,1)$ and $i\in \{1,\ldots,\kappa\}$, we define maps $\psi_{p,q}^{(i)} \colon \mathds{D}_1 \to \mathds{D}_2$, $\phi_{p,q} \colon \mathds{D}_2 \to \Rbb$, and $\tau_{p,q} \colon \mathds{D}_1 \to \Rbb$ by
\begin{align*}
\psi_{p,q}^{(i)}(\vartheta_1,\ldots,\vartheta_{2\kappa}) 
&= \big(\vartheta_{i},\vartheta_{\kappa + i}, \vartheta^{-1}_{i}(1-p),\vartheta^{-1}_{\kappa + i}(1-q)\big) \,, \\
\phi_{p,q}(\vartheta,\zeta,x_1,x_2) 
&= \int_{(x_1,x_2]} \big( \zeta(b) - \zeta(u) \big) \ \dif \vartheta(u) \,, \\
\tau_{p,q} &= \big( \phi_{p,q} \circ \psi_{p,q}^{(1)}, \ldots, \phi_{p,q} \circ \psi_{p,q}^{(\kappa)} \big)^\top\,.
\end{align*}
The map $\tau_{1,0} \colon \mathds{D}_1 \to \Rbb$ is defined by
\begin{align*}
\tau_{1,0}(\vartheta_1,\ldots,\vartheta_{2\kappa}) = \sum_{i=1}^\kappa e_i \ \int_{\Rbb} \big( 1 - \vartheta_{\kappa + i}(u) \big) \ \dif \vartheta_{i}(u)\,. \tag*{\qed}
\end{align*}
\end{definition}

\begin{lemma}\label{lemma:hadamard_dif}
For $p,q \in (0,1)$, the function $\tau_{p,q} \colon (\mathds{D}_1,\|\cdot\|_{\mathds{D}_1}) \to (\Rbb^\kappa,\|\cdot\|_\infty)$ is Hadamard differentiable at the point $(F_1,\ldots,F_{\kappa},G_1,\ldots,G_\kappa) \in \mathds{D}_1$, tangentially to the set 
\begin{align*}
\mathds{D}_0 
&:= \big\{(h_1,\ldots,h_{2\kappa}) \in \mathds{D}_{1} \colon \text{for each } i \in \{1,\ldots,\kappa\}, \\ 
&\hspace{4cm} h_{i} \text{ is continuous at } F_{i}^{-1}(1-p), \
              h_{\kappa + i} \text{ is continuous at } G_{i}^{-1}(1-q) \big\}\,.
\end{align*}
Its derivative at $(F_1,\ldots,F_{\kappa},G_1,\ldots,G_\kappa)$ is 
\begin{align*}
&D\tau_{p,q}(F_1,\ldots,F_{\kappa},G_1,\ldots,G_\kappa)[h_1,\ldots,h_{2\kappa}] \\
&\hspace{0.8cm}= \sum_{i=1}^\kappa e_i \Bigg[ 
\int_{\big(F_{i}^{-1}(1-p),G_{i}^{-1}(1-q)\big]} h_{i}(u-) \ \dif G_{i}(u) - \int_{\big(F_{i}^{-1}(1-p),G_{i}^{-1}(1-q)\big]} h_{\kappa + i}(u)  \ \dif F_{i}(u) \Bigg]\,.
\end{align*}
The functional $\tau_{1,0}$ is Hadamard differentiable as well, with derivative
\begin{align*}
&D\tau_{1,0}(F_1,\ldots,F_{\kappa},G_1,\ldots,G_\kappa)[h_1,\ldots,h_{2\kappa}] \\
&\hspace{1cm}    = \sum_{i=1}^\kappa e_i \Bigg[ 
\int_{\Rbb} h_{i}(u-) \ \dif G_{i}(u) - \int_{\Rbb} \big( h_{\kappa + i}(u) - \lim_{x \to -\infty} h_{\kappa + i}(x) \big) \ \dif F_{i}(u) \Bigg]\,.
\end{align*}
\end{lemma}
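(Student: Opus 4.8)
The plan is to obtain the Hadamard differentiability of $\tau_{p,q}$ by decomposing it, as the definition suggests, into the composition $\tau_{p,q} = (\phi_{p,q} \circ \psi_{p,q}^{(1)}, \ldots, \phi_{p,q} \circ \psi_{p,q}^{(\kappa)})^\top$ and applying the chain rule for Hadamard derivatives. Since each component is built the same way, it suffices to treat a single index $i$ and then assemble the results via $e_i$. First I would establish Hadamard differentiability of the inner map $\psi_{p,q}^{(i)}$ at $(F_1,\ldots,F_\kappa,G_1,\ldots,G_\kappa)$ tangentially to $\mathds{D}_0$. The first two coordinates of $\psi_{p,q}^{(i)}$ are just coordinate projections, hence trivially (Hadamard-)differentiable with derivative $(h_i, h_{\kappa+i})$. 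For the third and fourth coordinates I would invoke the standard Hadamard differentiability of the inverse/quantile map: the map $\vartheta \mapsto \vartheta^{-1}(1-p)$ is Hadamard differentiable at $F_i$ tangentially to functions $h_i$ that are continuous at $a_i = F_i^{-1}(1-p)$, with derivative $-h_i(a_i)/f_i(a_i)$, using the continuous differentiability and positivity of $f_i$ near $a_i$ assumed in Definition \ref{def:parameter} (and analogously for $G_i^{-1}(1-q)$ with derivative $-h_{\kappa+i}(b_i)/g_i(b_i)$). This is a classical result (van der Vaart, Lemma 21.3, or van der Vaart–Wellner); I would cite it rather than reprove it. Thus $D\psi_{p,q}^{(i)}(\cdot)[h] = \big(h_i,\, h_{\kappa+i},\, -h_i(a_i)/f_i(a_i),\, -h_{\kappa+i}(b_i)/g_i(b_i)\big)$.

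Next I would establish Hadamard differentiability of the outer map $\phi_{p,q}(\vartheta,\zeta,x_1,x_2) = \int_{(x_1,x_2]} (\zeta(b)-\zeta(u))\,\dif\vartheta(u)$ at the point $(F_i, G_i, a_i, b_i)$. This is the more delicate piece and I expect it to be the main obstacle, because $\phi_{p,q}$ depends on its arguments in three genuinely different ways: through the integrand $\zeta$, through the integrator $\vartheta$, and through the limits of integration $x_1, x_2$. I would handle the three dependencies separately by writing, for a perturbation $(\vartheta_t,\zeta_t,x_{1,t},x_{2,t}) = (F_i + t h_i, G_i + t h_{\kappa+i}, a_i + t \delta_1, b_i + t \delta_2) + o(t)$ (the form of an admissible Hadamard path), the difference quotient $t^{-1}\big(\phi_{p,q}(\vartheta_t,\zeta_t,x_{1,t},x_{2,t}) - \phi_{p,q}(F_i,G_i,a_i,b_i)\big)$ and expanding term by term. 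The integrand perturbation contributes $\int_{(a_i,b_i]}\big(h_{\kappa+i}(b_i) - h_{\kappa+i}(u)\big)\,\dif F_i(u)$; the integrator perturbation contributes, after an integration by parts to move $\dif h_i$ onto the smooth factor $G_i(b_i)-G_i(u)$, the term $-\int_{(a_i,b_i]} h_i(u-)\,\dif G_i(u)$ (here the requirement in $\mathds{D}_0$ that $h_i$ be continuous at the endpoint $a_i$ kills the boundary term, and the bounded-variation constraint defining $\mathds{D}$ keeps everything controlled); and the endpoint perturbations contribute boundary terms proportional to the integrand evaluated at $a_i$ and $b_i$ — but the integrand $(G_i(b_i)-G_i(u))$ vanishes at $u=b_i$ and contributes a term at $u=a_i$ that, once I substitute the derivative $\delta_1 = -h_i(a_i)/f_i(a_i)$ obtained from the inner map, either cancels or combines with the leading terms. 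I would carry out this bookkeeping carefully using uniform continuity and the dominated convergence theorem (together with uniform boundedness of the $\vartheta$-variations on $\mathds{D}$) to justify passing to the limit, and verify that the resulting candidate derivative is linear and continuous in $h$.

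Composing via the chain rule, the endpoint contributions coming from $D\psi_{p,q}^{(i)}$ should cancel against the corresponding boundary terms in $D\phi_{p,q}$ — this cancellation is exactly why the stated derivative has the clean form with no $f_i, g_i$ appearing — leaving $D(\phi_{p,q}\circ\psi_{p,q}^{(i)})[h] = \int_{(a_i,b_i]} h_i(u-)\,\dif G_i(u) - \int_{(a_i,b_i]} h_{\kappa+i}(u)\,\dif F_i(u)$, and summing against $e_i$ yields the claimed formula for $D\tau_{p,q}$. Finally, for $\tau_{1,0}$ the argument is simpler and in fact classical: $\tau_{1,0}$ has no inner quantile map and no variable limits of integration, so only the integrand-and-integrator expansion is needed; one integration by parts on each term gives $\int_{\Rbb} h_i(u-)\,\dif G_i(u) - \int_{\Rbb}\big(h_{\kappa+i}(u) - \lim_{x\to-\infty} h_{\kappa+i}(x)\big)\,\dif F_i(u)$, the subtraction of $\lim_{x\to-\infty} h_{\kappa+i}(x)$ being the boundary contribution at $-\infty$ that survives because the integration is over all of $\Rbb$ rather than a compact interval. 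I would remark that this recovers the known total-AUC derivative used by Konietschke et al. \cite{Konietschke2018}, which serves as a consistency check.
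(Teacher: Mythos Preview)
Your overall strategy coincides with the paper's: factor $\tau_{p,q}$ as $\phi_{p,q}\circ\psi_{p,q}^{(i)}$, cite the quantile-map derivative (van der Vaart, Lemma~21.3) for $\psi_{p,q}^{(i)}$, differentiate $\phi_{p,q}$ by splitting the difference quotient into an endpoint part ($D^{(1)}$ in the paper) and an integrand/integrator part ($D^{(2)}$), and then apply the chain rule so that the endpoint contributions disappear from the composed derivative. The treatment of $\tau_{1,0}$ is likewise the same.

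Your bookkeeping of the cancellation is off, however. The integration-by-parts boundary term from $\int_{(a_i,b_i]}\bigl(G_i(b_i)-G_i(u)\bigr)\,\dif h_{i,r}(u)$ is $-h_i(a_i)\bigl(G_i(b_i)-G_i(a_i)\bigr)$; continuity of $h_i$ at $a_i$ does \emph{not} kill it (continuity is needed only so that the quantile derivative exists). In the paper this term survives in $D\phi_{p,q}$ and is cancelled \emph{after} the chain rule by the lower-endpoint contribution $-x\,f_i(a_i)\bigl(G_i(b_i)-G_i(a_i)\bigr)$ once $x=-h_i(a_i)/f_i(a_i)$ is substituted. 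Symmetrically, the upper endpoint does contribute a nonzero term $y\,g_i(b_i)\bigl(F_i(b_i)-F_i(a_i)\bigr)$, coming not from the limit of integration (which, as you say, gives zero since the integrand vanishes at $b_i$) but from the dependence of $\zeta(b)$ on $b$; this cancels against the $h_{\kappa+i}(b_i)\bigl(F_i(b_i)-F_i(a_i)\bigr)$ piece of the integrand term once $y=-h_{\kappa+i}(b_i)/g_i(b_i)$. Finally, the integrator contribution carries a \emph{plus} sign: it is $\int_{(a_i,b_i]}\bigl(h_i(u-)-h_i(a_i)\bigr)\,\dif G_i(u)$.
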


\begin{theorem}\label{thm:convergence2}
Suppose that $\lim_{n \to \infty} \alpha_n \big/ (\alpha_n + \beta_n) = \lambda \in (0,1)$. Then, as $n \to \infty$, the sequence of random vectors $\big\{ (\alpha_n + \beta_n)^{1/2} \big(\hat{\theta}_{p,q;n}-\theta_{p,q}\big) \big\}_{n \in \Nbb}$ converges weakly to the normally distributed random vector
\begin{align*}
w_{p,q} = D\tau_{p,q}(F_1,\ldots,F_{\kappa},G_1,\ldots,G_\kappa)\Big[\frac{B_1}{\lambda^{1/2}},\ldots,\frac{B_\kappa}{\lambda^{1/2}},\frac{B_{\kappa+1}}{(1-\lambda)^{1/2}},\ldots, \frac{B_{2\kappa}}{(1-\lambda)^{1/2}}\Big]\,.
\end{align*}
The limit $w_{p,q}$ has mean zero and the elements of its covariance matrix $\Sigma_{p,q}$ are
\begin{align*}
\sigma_{p,q}(i,j) &= \lambda^{-1}  \iint_{D}  \cov\big(B_{i}(u),B_{j}(v)\big)  \ P^{(1)}_{i}\otimes P^{(1)}_{j}(\dif u, \dif v) \\
    & \hspace{2.8cm}+ (1-\lambda)^{-1} \iint_{D} \cov \big(B_{\kappa + i}(u),B_{\kappa + j}(v)\big) \ P^{(0)}_{i}\otimes P^{(0)}_{j}(\dif u, \dif v)  \,,
\end{align*}
where $D = [a_i,b_i)\times[a_j,b_j)$, $i,j \in \{1,\ldots,\kappa\}$\,.
\end{theorem}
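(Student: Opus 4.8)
The plan is to derive the weak convergence from the functional delta method and then to read the covariance off the explicit form of the Hadamard derivative. The first point is that $\hat\theta_{p,q;n}$ and $\theta_{p,q}$ are the values of one and the same functional at the empirical, respectively the true, distribution functions: comparing Definitions~\ref{def:parameter} and~\ref{def:estimator} with the maps $\psi_{p,q}^{(i)}$, $\phi_{p,q}$, $\tau_{p,q}$ defined just before Lemma~\ref{lemma:hadamard_dif}, one sees that $\theta_{p,q}=\tau_{p,q}(\mathbf F)$ and $\hat\theta_{p,q;n}=\tau_{p,q}(\mathbf F_n)$, where $\mathbf F=(F_1,\dots,F_\kappa,G_1,\dots,G_\kappa)$ and $\mathbf F_n=(\mathds F_{1,n},\dots,\mathds F_{\kappa,n},\mathds G_{1,n},\dots,\mathds G_{\kappa,n})$; indeed $\psi_{p,q}^{(i)}$ extracts the $i$th pair of distribution functions together with the two associated quantiles, whereupon $\phi_{p,q}$ returns the $i$th component of $\theta_{p,q}$ (resp.\ $\hat\theta_{p,q;n}$). (In the boundary case $(p,q)=(1,0)$ one uses $\tau_{1,0}$ in place of $\tau_{p,q}$.) So everything reduces to the asymptotics of $\tau_{p,q}$ evaluated along the empirical distribution functions.

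Next I would feed the empirical process of Theorem~\ref{thm:convergence1} into $\tau_{p,q}$ after rescaling it. With $r_n=(\alpha_n+\beta_n)^{1/2}\to\infty$ one has, as functions on $\overline{\Rbb}$, $r_n(\mathds F_{i,n}-F_i)=(r_n\alpha_n^{-1/2})\langle X_n,e_i\rangle$ and $r_n(\mathds G_{i,n}-G_i)=(r_n\beta_n^{-1/2})\langle X_n,e_{\kappa+i}\rangle$, with $r_n\alpha_n^{-1/2}\to\lambda^{-1/2}$ and $r_n\beta_n^{-1/2}\to(1-\lambda)^{-1/2}$ under the assumption on $\alpha_n/(\alpha_n+\beta_n)$. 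Since $X_n$ converges weakly to the tight Gaussian element $X_0$, it is bounded in probability in $\ell_\infty^{2\kappa}$, so replacing $r_n\alpha_n^{-1/2}$ and $r_n\beta_n^{-1/2}$ by their limits perturbs the process only by a uniformly $o_\pr(1)$ term; applying the continuous mapping theorem to the linear, bounded diagonal rescaling operator together with a Slutsky-type argument, $r_n(\mathbf F_n-\mathbf F)$ converges weakly in $(\ell_\infty^{2\kappa},d_\infty^{2\kappa})$ to
\[
Z=\big(\lambda^{-1/2}B_1,\dots,\lambda^{-1/2}B_\kappa,(1-\lambda)^{-1/2}B_{\kappa+1},\dots,(1-\lambda)^{-1/2}B_{2\kappa}\big).
\]

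Before invoking the delta method I would check that $Z$ satisfies the conditions defining the tangent set $\mathds D_0$ almost surely. By the covariance function in Theorem~\ref{thm:convergence1}, each $B_i$ (respectively $B_{\kappa+i}$) has the law of a standard Brownian bridge composed with $F_i$ (respectively $G_i$), and the tight version therefore has sample paths continuous at every continuity point of $F_i$ (respectively $G_i$); since $F_i$ and $G_i$ are continuously differentiable near $a_i=F_i^{-1}(1-p)$ and $b_i=G_i^{-1}(1-q)$ by Definition~\ref{def:parameter}, the continuity requirements of $\mathds D_0$ hold with probability one. Lemma~\ref{lemma:hadamard_dif} provides Hadamard differentiability of $\tau_{p,q}$ (respectively $\tau_{1,0}$) at $\mathbf F$ tangentially to $\mathds D_0$, so the functional delta method gives that $r_n(\hat\theta_{p,q;n}-\theta_{p,q})=r_n(\tau_{p,q}(\mathbf F_n)-\tau_{p,q}(\mathbf F))$ converges weakly to $D\tau_{p,q}(\mathbf F)[Z]=w_{p,q}$. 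Being the image of the centred Gaussian element $Z$ under a bounded linear map into $\Rbb^\kappa$, $w_{p,q}$ is a centred Gaussian vector.

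Finally, I would compute $\sigma_{p,q}(i,j)=\ew[w_{p,q}(i)\,w_{p,q}(j)]$ from the explicit derivative in Lemma~\ref{lemma:hadamard_dif}, which exhibits $w_{p,q}(i)$ as the difference of $\lambda^{-1/2}\int_{(a_i,b_i]}B_i(u-)\,\dif G_i(u)$, a $\sigma\{B_1,\dots,B_\kappa\}$-measurable quantity, and $(1-\lambda)^{-1/2}\int_{(a_i,b_i]}B_{\kappa+i}(u)\,\dif F_i(u)$, a $\sigma\{B_{\kappa+1},\dots,B_{2\kappa}\}$-measurable quantity. These two $\sigma$-fields are independent, because the corresponding cross-covariances vanish by Theorem~\ref{thm:convergence1} and $(B_1,\dots,B_{2\kappa})$ is jointly Gaussian, so the mixed products drop out of $\ew[w_{p,q}(i)\,w_{p,q}(j)]$ and only two terms survive. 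Moving the expectation inside each double integral by Fubini's theorem, replacing $B_i(u-)$ by $B_i(u)$ (they differ only on the countable set of discontinuities of $F_i$, which carries no $\dif G_i$-mass on $(a_i,b_i]$ under the continuity assumptions, and likewise for $F_j$), and noting that the restrictions of $\dif F_i$ and $\dif G_i$ to $(a_i,b_i]$ agree with those of $P^{(0)}_i$ and $P^{(1)}_i$ to $[a_i,b_i)$ because $F_i,G_i$ are continuous at the endpoints, one arrives at
\begin{align*}
\sigma_{p,q}(i,j) &= \lambda^{-1}\iint_D\cov\big(B_i(u),B_j(v)\big)\,P^{(1)}_i\otimes P^{(1)}_j(\dif u,\dif v)\\
&\quad + (1-\lambda)^{-1}\iint_D\cov\big(B_{\kappa+i}(u),B_{\kappa+j}(v)\big)\,P^{(0)}_i\otimes P^{(0)}_j(\dif u,\dif v),
\end{align*}
with $D=[a_i,b_i)\times[a_j,b_j)$. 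The structural steps are routine once Theorem~\ref{thm:convergence1} and Lemma~\ref{lemma:hadamard_dif} are granted; I expect the genuine work to be the measure-theoretic bookkeeping in this last step --- one-sided limits, half-open intervals, and possible atoms of $F_i$ and $G_i$ --- together with the careful verification that $Z$ lands in the tangent set $\mathds D_0$.
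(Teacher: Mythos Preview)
Your proposal is correct and follows essentially the same route as the paper: functional delta method via Theorem~\ref{thm:convergence1} and Lemma~\ref{lemma:hadamard_dif}, Gaussianity of the limit from linearity of the derivative, and Fubini for the covariance. You supply more detail than the paper does---the explicit rescaling from $\alpha_n^{1/2},\beta_n^{1/2}$ to $(\alpha_n+\beta_n)^{1/2}$ via Slutsky, the verification that $Z$ takes values in $\mathds D_0$, and the independence argument killing the cross terms---but the skeleton is identical.
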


\begin{corollary}
The statistic $\hat{\sigma}_{p,q;n}(i,j)$ is a consistent estimator of $\sigma_{p,q}(i,j)$. 
\end{corollary}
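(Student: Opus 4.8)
The plan is to show that each of the two summands in the definition of $\hat\sigma_{p,q;n}(i,j)$ converges in probability to the matching summand of $\sigma_{p,q}(i,j)$. These two summands are mirror images of one another under the exchange of the roles of $\{\xi_r\}$ and $\{\eta_s\}$ (and correspondingly of $F$ and $G$, of $\alpha_n$ and $\beta_n$, and of $\lambda$ and $1-\lambda$), so it suffices to treat the first one. I would write it as the product of the scalar factor $(\alpha_n+\beta_n)/\alpha_n$ and the double average $A_n:=\beta_n^{-2}\sum_{s_1,s_2=1}^{\beta_n}\widehat{\Phi}_n(\eta_{s_1}(i),\eta_{s_2}(j))$, where $\widehat{\Phi}_n(u,v)=\ind\{\hat a_{i,n}<u\le\hat b_{i,n}\}\,\ind\{\hat a_{j,n}<v\le\hat b_{j,n}\}\big[\mathds{F}_n(k_{i,j}(u,v))-\mathds{F}_{i,n}(u)\mathds{F}_{j,n}(v)\big]$. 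The scalar factor tends to $\lambda^{-1}$ by the standing assumption $\alpha_n/(\alpha_n+\beta_n)\to\lambda$, so the whole task reduces to proving that $A_n$ converges in probability to $\iint_{D}\big(F(k_{i,j}(u,v))-F_i(u)F_j(v)\big)\,P^{(1)}_i\otimes P^{(1)}_j(\dif u,\dif v)$ with $D=[a_i,b_i)\times[a_j,b_j)$; by Theorem \ref{thm:convergence1} the integrand is exactly $\cov(B_i(u),B_j(v))$, which is the first summand of $\sigma_{p,q}(i,j)$.

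To establish this I would pass from $\widehat{\Phi}_n$ to the deterministic kernel $\Phi(u,v)=\ind\{a_i<u\le b_i\}\,\ind\{a_j<v\le b_j\}\big[F(k_{i,j}(u,v))-F_i(u)F_j(v)\big]$ in two replacement steps, bounding the error of each uniformly in $(u,v)$ so that it survives the averaging. The first step replaces the bracket $\mathds{F}_n(k_{i,j}(u,v))-\mathds{F}_{i,n}(u)\mathds{F}_{j,n}(v)$ by $F(k_{i,j}(u,v))-F_i(u)F_j(v)$, at a cost of at most $\sup_{u,v}|\mathds{F}_n(k_{i,j}(u,v))-F(k_{i,j}(u,v))|+\|\mathds{F}_{i,n}-F_i\|_\infty+\|\mathds{F}_{j,n}-F_j\|_\infty$, which tends to $0$ almost surely by the bivariate Glivenko--Cantelli theorem. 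The second step replaces the data-driven window $(\hat a_{i,n},\hat b_{i,n}]\times(\hat a_{j,n},\hat b_{j,n}]$ by $(a_i,b_i]\times(a_j,b_j]$: using $|\ind_A\ind_B-\ind_{A'}\ind_{B'}|\le|\ind_A-\ind_{A'}|+|\ind_B-\ind_{B'}|$ and summing out the free index, the error is at most the $\mathds{G}_{i,n}$-mass of the interval between $\hat a_{i,n}$ and $a_i$, plus that between $\hat b_{i,n}$ and $b_i$, plus the analogous $j$-terms; each such increment is bounded by $2\|\mathds{G}_{i,n}-G_i\|_\infty+\big(G_i(\hat a_{i,n}\vee a_i)-G_i(\hat a_{i,n}\wedge a_i)\big)$. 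Here the first term vanishes by Glivenko--Cantelli, and the second vanishes in probability because $\hat a_{i,n}=\mathds{F}_{i,n}^{-1}(1-p)\to a_i$ in probability---consistency of the empirical quantile at a point where $F_i$ is continuous and strictly increasing, guaranteed by $f_i(a_i)>0$---while $G_i$ is continuous at $a_i$ by Definition \ref{def:parameter}; the same reasoning applies at $b_i$, $a_j$, $b_j$.

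After the two replacements we are left with the genuine V-statistic $\beta_n^{-2}\sum_{s_1,s_2=1}^{\beta_n}\Phi(\eta_{s_1}(i),\eta_{s_2}(j))$. Its $\beta_n$ diagonal terms contribute $O(\beta_n^{-1})$, and its off-diagonal part is, after symmetrising its kernel, a U-statistic whose kernel $\Phi$ is bounded (hence integrable) and is evaluated at the independent pair $(\eta_{s_1}(i),\eta_{s_2}(j))\sim P^{(1)}_i\otimes P^{(1)}_j$; the law of large numbers for U-statistics therefore gives convergence in probability to $\iint\Phi\,\dif P^{(1)}_i\,\dif P^{(1)}_j$, which equals $\iint_D\cov(B_i,B_j)\,\dif P^{(1)}_i\,\dif P^{(1)}_j$ because $P^{(1)}_i$ and $P^{(1)}_j$ charge none of the endpoints $a_i,b_i,a_j,b_j$ (again by the differentiability hypothesis). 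Running the mirror argument on the second summand of $\hat\sigma_{p,q;n}(i,j)$---exchange $\{\xi_r\}\leftrightarrow\{\eta_s\}$, $F\leftrightarrow G$, $\alpha_n\leftrightarrow\beta_n$---yields convergence in probability to $(1-\lambda)^{-1}\iint_D\cov(B_{\kappa+i}(u),B_{\kappa+j}(v))\,P^{(0)}_i\otimes P^{(0)}_j(\dif u,\dif v)$, and adding the two limits gives $\hat\sigma_{p,q;n}(i,j)\to\sigma_{p,q}(i,j)$ in probability. In the boundary case $(p,q)=(1,0)$ the trimming indicators equal $1$ off a $P^{(1)}$- or $P^{(0)}$-null set of endpoints, so the second replacement step is vacuous and the argument only simplifies.

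I expect the second replacement step---exchanging the estimated trimming window for the deterministic one---to be the main obstacle. It is the only place that genuinely uses the smoothness assumptions of Definition \ref{def:parameter}: positivity of $f_i,g_i$ at $a_i,b_i$ delivers consistency of the sample quantiles $\hat a_{i,n},\hat b_{i,n}$, and continuity of $F_i,G_i$ there guarantees that nudging the integration limits disturbs only a negligible amount of probability mass. A secondary point that needs care is that the summand of $A_n$ depends on $n$ through both $\mathds{F}_n$ and $\hat a_{i,n},\hat b_{i,n}$, so $A_n$ is not literally a V-statistic until those ingredients have been frozen; it is precisely the uniformity of the Glivenko--Cantelli bounds that licenses freezing them before invoking the U-statistic law of large numbers.
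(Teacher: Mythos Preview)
Your proposal is correct and follows essentially the same route as the paper: split into the two mirror summands, pull out the scalar factor, then replace the empirical integrand $\mathds F_n(k_{i,j})-\mathds F_{i,n}\mathds F_{j,n}$ by its population analogue via the (multivariate) Glivenko--Cantelli theorem, handle the random trimming window via consistency of the sample quantiles together with continuity of $F_i,G_i$ at the endpoints, and finally pass from the empirical to the true product measure. The only cosmetic difference is that the paper swaps the order of the last two steps (it first fixes the deterministic window and bounds the symmetric-difference mass, then replaces $\mathds P^{(1)}_{i,n}\otimes\mathds P^{(1)}_{j,n}$ by $P^{(1)}_i\otimes P^{(1)}_j$, citing ``Glivenko--Cantelli'' for the latter), whereas you freeze the window first and then invoke the U-/V-statistic law of large numbers for the remaining double average; your formulation is arguably more explicit at that point.
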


\noindent
Slightly altered, Theorems \ref{thm:convergence1} and \ref{thm:convergence2} are valid without the assumption that $\sigma\{\xi_r \colon r \in \Nbb_+ \}$ and $\sigma\{\eta_s \colon s \in \Nbb_+ \}$ are independent. { In this case, 
\begin{align*}
\cov\big(B_i(u),B_j(v)\big) = H(k_{i,j}(u,v))-F_i(u)G_j(v) \quad &\text{ if } \quad s,t \in \Rbb; \ i \in \{1,\ldots,{\kappa}\}\,, j \in \{\kappa+1, \ldots, 2\kappa\}\,,
\end{align*}
and} the covariance $\sigma_{p,q}(i,j)$ has the additional addend
\begin{align*}
       - [\lambda (1-\lambda)]^{-1/2} \Big[ \iint_{D} \cov \big(B_{i}(u),B_{\kappa+j}(v) \big) \ P^{(1)}_{i}\otimes P^{(0)}_{j}(\dif u, \dif v) 
     + \iint_{D} \cov \big(B_{\kappa + i}(u),B_{j}(v) \big) \ P^{(0)}_{i}\otimes P^{(1)}_{j}(\dif u, \dif v) \Big] \,.
\end{align*}

It is already well known that the sequence $\big\{ (\alpha_n + \beta_n)^{1/2} \big(\hat{\theta}_{1,0}-\theta_{1,0}\big) \big\}_{n \in \Nbb}$ converges weakly to a normally distributed random vector. As a side result we obtain an explicit form of that limit,
\begin{align*}
 w_{1,0} = D\tau_{1,0}(F_1,\ldots,G_{\kappa})\big[\lambda^{-1/2}B_1,\ldots,\lambda^{-1/2}B_\kappa,(1-\lambda)^{-1/2}B_{\kappa+1},\ldots,(1-\lambda)^{-1/2}B_{2\kappa}\big].    
\end{align*}
The corresponding covariance matrix $\Sigma_{1,0}$ has the same form as $\Sigma_{p,q}$ except that $D = \Rbb^2$.

\begin{lemma}\label{thm:boostrap}
Under the conditions of Theorem \ref{thm:convergence2}, the sequence $\big\{(\alpha_n+\beta_n)^{1/2}\big(\hat{\theta}^*_{p,q;n}- \hat{\theta}_{p,q;n}\big)\big\}_{n \in \Nbb}$ converges conditionally in distribution to $w_{p,q}$, i.e. 
\begin{align*}
\sup_{h \in \mathrm{BL}_1(\Rbb^\kappa)} \Big| \ew \Big[ h \big( (\alpha_n+\beta_n)^{1/2}\big( \hat{\theta}^*_{p,q;n} - \hat{\theta}_{p,q;n}\big) \big) \bigm| \xi_1,\ldots,\xi_{\alpha_n},\eta_1,\ldots,\eta_{\beta_n} \Big] - \ew \, h(w_{p,q})  \Big| \to 0 
\end{align*}
in outer probability as $n \to \infty$, where $\mathrm{BL}_1(\ell_\infty^\kappa(\Rbb))$ denotes the set of bounded Lipschitz functions $\Rbb^\kappa \to [-1,1]$ with Lipschitz constant at most $1$.
\end{lemma}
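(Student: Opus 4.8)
The idea is to view $\hat{\theta}_{p,q;n}$ as a Hadamard-differentiable functional of the empirical distribution functions and to combine the bootstrap central limit theorem for empirical processes with the functional delta method for the bootstrap. Write $\mathbf{F}_n := (\mathds{F}_{1,n},\ldots,\mathds{F}_{\kappa,n},\mathds{G}_{1,n},\ldots,\mathds{G}_{\kappa,n})$ for the vector of empirical distribution functions, regarded as a random element of $\mathds{D}_1$, and let $\mathbf{F}_n^*$ be the analogous vector formed from the bootstrap samples. Both are $2\kappa$-tuples of genuine cumulative distribution functions, so their $F$-type coordinates have total variation one and they lie in the domain of $\tau_{p,q}$; by Definition \ref{def:estimator} we then have $\hat{\theta}_{p,q;n} = \tau_{p,q}(\mathbf{F}_n)$ and $\hat{\theta}^*_{p,q;n} = \tau_{p,q}(\mathbf{F}_n^*)$ (with $\tau_{1,0}$ in place of $\tau_{p,q}$ in the total-AUC case $(p,q)=(1,0)$).

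The first step is a bootstrap analogue of Theorem \ref{thm:convergence1}. The coordinates of $X_n$ are indexed by the function classes $\{(x_1,\ldots,x_\kappa)\mapsto\ind\{x_i\le t\}\colon i\le\kappa,\ t\in\overline{\Rbb}\}$ and its $\eta$-analogue, each a finite union of VC classes and therefore $P^{(0)}$- respectively $P^{(1)}$-Donsker. Since $\sigma\{\xi_r\}$ and $\sigma\{\eta_s\}$ are independent and resampling is carried out independently within the two groups (and $\lambda\in(0,1)$ forces $\alpha_n,\beta_n\to\infty$), the bootstrap central limit theorem for Donsker classes (Giné--Zinn; van der Vaart and Wellner, Section 3.6) shows that $\big(\alpha_n^{1/2}(\mathds{F}^*_{i,n}-\mathds{F}_{i,n})\big)_{i\le\kappa}$ and $\big(\beta_n^{1/2}(\mathds{G}^*_{i,n}-\mathds{G}_{i,n})\big)_{i\le\kappa}$ converge conditionally in distribution, in the $\mathrm{BL}_1$ sense and in outer probability, to $(B_1,\ldots,B_\kappa)$ and $(B_{\kappa+1},\ldots,B_{2\kappa})$, the very limits appearing in Theorem \ref{thm:convergence1}. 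Rescaling by $\big((\alpha_n+\beta_n)/\alpha_n\big)^{1/2}\to\lambda^{-1/2}$ and $\big((\alpha_n+\beta_n)/\beta_n\big)^{1/2}\to(1-\lambda)^{-1/2}$ then yields that $(\alpha_n+\beta_n)^{1/2}(\mathbf{F}_n^*-\mathbf{F}_n)$ converges conditionally, in the same sense, to $T := \big(\lambda^{-1/2}B_1,\ldots,\lambda^{-1/2}B_\kappa,(1-\lambda)^{-1/2}B_{\kappa+1},\ldots,(1-\lambda)^{-1/2}B_{2\kappa}\big)$.

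For the second step I would invoke the functional delta method for the bootstrap (see, e.g., van der Vaart and Wellner, Theorem 3.9.11). Its hypotheses are: $\tau_{p,q}$ is Hadamard differentiable at $(F_1,\ldots,F_\kappa,G_1,\ldots,G_\kappa)$ tangentially to $\mathds{D}_0$, which is Lemma \ref{lemma:hadamard_dif}; the unconditional sequence $(\alpha_n+\beta_n)^{1/2}(\mathbf{F}_n-(F_1,\ldots,G_\kappa))$ converges weakly to $T$, which is what underlies Theorem \ref{thm:convergence2}; $T$ takes values in the tangent set $\mathds{D}_0$ almost surely; and the conditional convergence established in the first step holds. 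The support statement is where a little care is needed: because $F_i$ and $G_i$ are continuous at $a_i=F_i^{-1}(1-p)$ and $b_i=G_i^{-1}(1-q)$ (indeed differentiable near these points by Definition \ref{def:parameter}), the covariance function of $X_0$ in Theorem \ref{thm:convergence1} forces $B_i$ to be almost surely continuous at $F_i^{-1}(1-p)$ and $B_{\kappa+i}$ almost surely continuous at $G_i^{-1}(1-q)$, so that $T\in\mathds{D}_0$ with probability one. With these ingredients the delta method gives that $(\alpha_n+\beta_n)^{1/2}\big(\hat{\theta}^*_{p,q;n}-\hat{\theta}_{p,q;n}\big)=(\alpha_n+\beta_n)^{1/2}\big(\tau_{p,q}(\mathbf{F}_n^*)-\tau_{p,q}(\mathbf{F}_n)\big)$ converges conditionally in distribution, in outer probability, to $D\tau_{p,q}(F_1,\ldots,G_\kappa)[T]=w_{p,q}$, which is exactly the asserted $\mathrm{BL}_1$ statement; the case $(p,q)=(1,0)$ is identical upon using the derivative of $\tau_{1,0}$ from Lemma \ref{lemma:hadamard_dif}.

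The main obstacle is not a single computation but the bookkeeping needed to apply the bootstrap delta method cleanly: verifying the Donsker property jointly across the $\kappa$ coordinates (routine, since the indexing classes are VC), checking that the Gaussian limit concentrates on the tangent set $\mathds{D}_0$ (which, as noted, hinges on continuity of $F_i,G_i$ at the boundary quantiles and is precisely the point that would break down were $p$ or $q$ allowed to reach $0$ in the presence of atoms), and confirming the asymptotic-measurability conditions that make the outer-probability formulation legitimate. A more elementary alternative would expand the trimmed Mann--Whitney statistic \eqref{eq:trimmed_MW} directly and control the fluctuation of the resampled trimming indices $\hat{a}^*_{i,n},\hat{b}^*_{i,n}$, but this would essentially redo the work already contained in the delta-method proof of Theorem \ref{thm:convergence2}.
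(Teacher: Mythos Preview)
Your proposal is correct and follows essentially the same route as the paper: apply the bootstrap functional delta method (the paper cites van der Vaart's Theorem 23.9, your Theorem 3.9.11 in van der Vaart--Wellner is the same result) after verifying conditional weak convergence of the bootstrapped empirical process via the Donsker/Gin\'e--Zinn bootstrap CLT (the paper's Theorem 3.6.1 of \cite{vandervaart1996}) and the unconditional convergence from Theorem \ref{thm:convergence1}. Your write-up is in fact more explicit than the paper's three-line proof, particularly in spelling out why the limit $T$ concentrates on the tangent set $\mathds{D}_0$ and in handling the two-sample rescaling by $\lambda^{-1/2}$ and $(1-\lambda)^{-1/2}$.
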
 

\noindent
Based on the Glivenko--Cantelli Theorem for Efron's Boostrap \citep[Theorem 3.2]{wellner2001}, we can additionally infer that the bootstrap version of the covariance estimator is consistent.

\begin{lemma}\label{lemma:boostrap_var}
Let $\hat{\sigma}^*_{p,q;n}(i,j)$ denote the bootstrap version of
$\hat{\sigma}_{p,q;n}(i,j)$. Then, for almost all data generating sequences $\{\xi_{\alpha_n}\}_{n \in \Nbb}$\,, $\{\eta_{\beta_n}\}_{n \in \Nbb}$ and every $\varepsilon > 0$,
\begin{align*}
\pr \Big\{ \big| \hat{\sigma}^*_{p,q;n}(i,j) - \hat{\sigma}_{p,q;n}(i,j) \big| > \varepsilon \bigm| \xi_1,\ldots,\xi_{\alpha_n}, \eta_1,\ldots,\eta_{\beta_n} \Big\} \to 0\,,
\end{align*}  
as $n \to \infty$.
\end{lemma}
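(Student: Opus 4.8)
The plan is to mirror the argument that establishes consistency of $\hat{\sigma}_{p,q;n}(i,j)$ for $\sigma_{p,q}(i,j)$ (the Corollary above), but to carry it out conditionally on the data, replacing every invocation of the ordinary Glivenko--Cantelli theorem by its bootstrap counterpart \citep[Theorem 3.2]{wellner2001}. First I would recall that $\hat{\sigma}_{p,q;n}(i,j)$ is a continuous functional---built from integrals against empirical measures, products of empirical CDFs, and the estimated endpoints $\hat a_{i,n},\hat b_{i,n}$---of the empirical processes $\mathds{F}_{i,n}$, $\mathds{G}_{i,n}$ and the bivariate empirical CDFs $\mathds{F}_n(k_{i,j}(\cdot,\cdot))$, $\mathds{G}_n(k_{i,j}(\cdot,\cdot))$. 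The bootstrap version $\hat{\sigma}^*_{p,q;n}(i,j)$ is the same functional evaluated at the bootstrapped empirical measures $\mathds{F}^*_{i,n}$, $\mathds{G}^*_{i,n}$, etc. Since $\hat{\sigma}_{p,q;n}(i,j)\to\sigma_{p,q}(i,j)$ in probability by the Corollary, it suffices to show $\hat{\sigma}^*_{p,q;n}(i,j)-\sigma_{p,q}(i,j)\to 0$ conditionally in probability, for almost every data sequence; subtracting the two statements gives the claim.

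Concretely, I would proceed in three steps. Step one: invoke the bootstrap Glivenko--Cantelli theorem to get $\sup_u|\mathds{F}^*_{i,n}(u)-F_i(u)|\to 0$ and $\sup_u|\mathds{G}^*_{i,n}(u)-G_i(u)|\to 0$ conditionally a.s., and likewise $\sup_{x,y}|\mathds{F}^*_n(k_{i,j}(x,y))-F(k_{i,j}(x,y))|\to 0$ and the $\mathds{G}$ analogue, using that the classes of indicator functions $\{\mathds{1}\{\cdot\le u\}\}$ and $\{\mathds{1}\{\cdot\le x,\,\cdot\le y\}\}$ are Donsker, hence Glivenko--Cantelli, and the bootstrap version of Glivenko--Cantelli holds for such classes. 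Step two: deduce convergence of the plug-in endpoints, $\hat a^*_{i,n}=(\mathds{F}^*_{i,n})^{-1}(1-p)\to a_i$ and $\hat b^*_{i,n}=(\mathds{G}^*_{i,n})^{-1}(1-q)\to b_i$ conditionally a.s., from uniform convergence of the CDFs together with the assumption (Definition \ref{def:parameter}) that $F_i,G_i$ are continuously differentiable with strictly positive derivative at $a_i,b_i$, which makes the generalized inverse continuous there (a standard quantile-continuity argument, essentially Lemma 21.2 of \citet{vaart1998}, adapted to the bootstrap). Step three: substitute these uniform limits into the explicit double-sum expression for $\hat{\sigma}^*_{p,q;n}(i,j)$; the indicator factors $\mathds{1}\{\hat a^*_{i,n}<\eta^*_{s_1}(i)\le \hat b^*_{i,n}\}$ and the bracketed covariance-type differences are all bounded continuous functionals of the quantities controlled in steps one and two, so the double average converges to the integral $\lambda^{-1}\iint_D \cov(B_i(u),B_j(v))\,P^{(1)}_i\otimes P^{(1)}_j(\dif u,\dif v)$ plus the analogous $P^{(0)}$ term, i.e. to $\sigma_{p,q}(i,j)$. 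Because all convergences in steps one and two are for almost every data-generating sequence, the resulting convergence of $\hat{\sigma}^*_{p,q;n}(i,j)$ is conditional-in-probability for almost all sequences, which is exactly the assertion; since the limit is a constant, convergence in conditional probability and conditional distribution coincide.

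The main obstacle is Step two: the continuity of the map from a CDF to its $(1-p)$-quantile is not automatic at an arbitrary point, and one must handle the boundary behaviour carefully---uniform closeness of $\mathds{F}^*_{i,n}$ to $F_i$ forces $\hat a^*_{i,n}$ near $a_i$ only because $F_i$ is strictly increasing and differentiable there, and the bootstrap empirical quantile must be pinned between the left and right limits implied by the $\varepsilon$-neighbourhood in Definition \ref{def:parameter}. A secondary technical point is the interchange of limits in the integral against the (random) measure $\dif\mathds{F}^*_{i,n}$: here one should argue via the Helly--Bray/portmanteau route, noting that $\mathds{F}^*_{i,n}\Rightarrow F_i$ weakly as measures (conditionally a.s.) and that the integrand, restricted to the shrinking-to-$[a_i,b_i)$ domain, converges uniformly to a bounded continuous limit, so Slutsky-type arguments for the conditional law apply. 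Everything else---boundedness of indicators, the $(\alpha_n+\beta_n)/(\beta_n^2\alpha_n)$ and $(\alpha_n+\beta_n)/(\alpha_n^2\beta_n)$ normalizing factors converging to $\lambda^{-1}$ and $(1-\lambda)^{-1}$ under the assumption $\alpha_n/(\alpha_n+\beta_n)\to\lambda$, and the algebra matching the double sum to the product-measure integral---is routine bookkeeping already implicit in the proof of the Corollary.
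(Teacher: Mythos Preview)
Your proposal is correct and follows essentially the same route as the paper. The paper itself offers no detailed proof of this lemma---only the one-sentence remark preceding the statement that the result follows from the Glivenko--Cantelli theorem for Efron's bootstrap \citep[Theorem 3.2]{wellner2001}---so your three-step argument (bootstrap Glivenko--Cantelli for the marginal and bivariate empirical CDFs, quantile continuity for the endpoints under the positivity assumption of Definition~\ref{def:parameter}, then substitution into the double sum and comparison with $\sigma_{p,q}(i,j)$ via the Corollary) is exactly the intended expansion of that remark, mirroring the supplemental proof of Corollary~1 with bootstrap quantities in place of the original empirical ones.
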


\begin{theorem}\label{lemma_test}
Let $p,q \in (0,1)$ or $(p,q) = (1,0)$. Then, under the conditions of Theorem \ref{thm:convergence2}, the family of tests $\big\{\varphi_{i,\delta} \colon i \in \{1,\ldots,r\}\big\}$ for the testing problem \eqref{eq:hypotheses} asymptotically controls the family wise error rate in the strong sense; i.e. for any subset $\{i_1,\ldots,i_m\} \subset \{1,\ldots,r\}$, if $H_0^{i_1} \wedge \ldots \wedge H_0^{i_m}$ is true, then  
\begin{align*}
\lim_{n \to \infty} \pr \Big( \bigcup_{j=1}^m \big\{ \varphi_{i_j,\delta}(\xi_1,\ldots,\xi_{\alpha_n},\eta_1,\ldots,\eta_{_{\beta_n}}) = 1 \big\} \Big)  \leq \delta\,.
\end{align*}
The test $\varphi_\delta$ for the global hypotheses
$H_0 \colon C^\top\theta_{p,q} = 0$ vs. $H_1 \colon C^\top\theta_{p,q} \neq 0$ is asymptotically of size $\delta$.
\end{theorem}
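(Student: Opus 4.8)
The plan is to derive the assertion from three facts already in hand: the weak convergence $(\alpha_n+\beta_n)^{1/2}(\hat\theta_{p,q;n}-\theta_{p,q}) \Rightarrow w_{p,q}$ with $w_{p,q}$ centred Gaussian (Theorem~\ref{thm:convergence2}), the conditional bootstrap convergence of $(\alpha_n+\beta_n)^{1/2}(\hat\theta^*_{p,q;n}-\hat\theta_{p,q;n})$ to $w_{p,q}$ (Lemma~\ref{thm:boostrap}), and the consistency of the two covariance estimators (the Corollary to Theorem~\ref{thm:convergence2} and Lemma~\ref{lemma:boostrap_var}); these are glued together by Slutsky-type arguments. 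Write $\Sigma_{p,q}$ for the covariance matrix of $w_{p,q}$, let $V_{p,q}$ be the diagonal matrix with $\mathrm{diag}(V_{p,q}) = \mathrm{diag}(C^\top\Sigma_{p,q}C)$ --- tacitly assumed, as in Definition~\ref{def:test}, to have strictly positive entries so that $V_{p,q}^{-1/2}$ exists --- and set $Z = V_{p,q}^{-1/2}C^\top w_{p,q}$, a Gaussian vector with standard-normal marginals. Put $\Lambda(t) = \pr\{\|Z\|_\infty \leq t\}$. Since $\{\|Z\|_\infty = t\} \subseteq \bigcup_i \{|Z_i| = t\}$ and each $Z_i$ is non-atomic, $\Lambda$ is continuous; it is moreover strictly increasing on $(0,\infty)$ by a routine Gaussian support argument, so its $(1-\delta)$-quantile $q_{1-\delta}$ is characterised by $\Lambda(q_{1-\delta}) = 1-\delta$.

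First I would prove that the data-dependent critical value converges: $q(S_n^*;1-\delta) \to q_{1-\delta}$ in outer probability. Lemma~\ref{lemma:boostrap_var} together with the Corollary gives $\hat\sigma^*_{p,q;n}(i,j) \to \sigma_{p,q}(i,j)$ in conditional probability for almost every data-generating sequence, hence $(V_{p,q;n}^*)^{-1/2} \to V_{p,q}^{-1/2}$; combining this with Lemma~\ref{thm:boostrap} through a conditional Slutsky lemma shows that $S_n^* = (\alpha_n+\beta_n)^{1/2}(V_{p,q;n}^*)^{-1/2}(C^\top\hat\theta^*_{p,q;n}-C^\top\hat\theta_{p,q;n})$ converges conditionally in distribution to $Z$, in outer probability. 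Because $\Lambda$ is continuous, this upgrades to $\sup_t |\pr\{\|S_n^*\|_\infty \leq t \mid \text{data}\} - \Lambda(t)| \to 0$ in outer probability (pointwise convergence of monotone functions to a continuous limit is uniform), and inverting at $q_{1-\delta}$, where $\Lambda$ is strictly increasing, yields the claimed convergence of $q(S_n^*;1-\delta)$. The essential point is that $S_n^*$ is recentred at $\hat\theta_{p,q;n}$, so its limit --- and therefore $q_{1-\delta}$ --- does not depend on which hypotheses are true.

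Next, fix $\{i_1,\ldots,i_m\} \subseteq \{1,\ldots,r\}$ and assume $H_0^{i_1} \wedge \cdots \wedge H_0^{i_m}$ holds, i.e. $\langle c_{i_j},\theta_{p,q}\rangle = 0$ for all $j$. Then $\langle T_n, e_{i_j}\rangle$ equals $(\alpha_n+\beta_n)^{1/2}\langle c_{i_j},\hat\theta_{p,q;n}-\theta_{p,q}\rangle$ divided by its estimated standard deviation, so by Theorem~\ref{thm:convergence2}, the Corollary ($V_{p,q;n} \to V_{p,q}$ in probability), and the continuous mapping theorem, the subvector $T_n^{(m)} = (\langle T_n,e_{i_1}\rangle,\ldots,\langle T_n,e_{i_m}\rangle)^\top$ converges weakly to $Z^{(m)}$, the corresponding subvector of $Z$. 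Since $q(S_n^*;1-\delta)$ converges in probability to the constant $q_{1-\delta}$, Slutsky's lemma yields $(T_n^{(m)}, q(S_n^*;1-\delta)) \Rightarrow (Z^{(m)}, q_{1-\delta})$, and as $\pr\{\|Z^{(m)}\|_\infty = q_{1-\delta}\} = 0$ we obtain
\begin{align*}
\pr\Big(\bigcup_{j=1}^m \{\varphi_{i_j,\delta} = 1\}\Big) &= \pr\big(\|T_n^{(m)}\|_\infty > q(S_n^*;1-\delta)\big) \\
&\longrightarrow \pr\big(\|Z^{(m)}\|_\infty > q_{1-\delta}\big) \leq \pr\big(\|Z\|_\infty > q_{1-\delta}\big) = \delta ,
\end{align*}
the inequality holding because $Z^{(m)}$ is a subvector of $Z$, so $\|Z^{(m)}\|_\infty \leq \|Z\|_\infty$. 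This establishes strong control of the family-wise error rate. For the global test, under $H_0 \colon C^\top\theta_{p,q} = 0$ every coordinate of $C^\top\theta_{p,q}$ vanishes, so the same reasoning with $m = r$ and $\{i_1,\ldots,i_m\} = \{1,\ldots,r\}$ gives $T_n \Rightarrow Z$ and hence $\pr\{\varphi_\delta = 1\} = \pr\{\|T_n\|_\infty > q(S_n^*;1-\delta)\} \to \pr\{\|Z\|_\infty > q_{1-\delta}\} = \delta$, so $\varphi_\delta$ is asymptotically of size $\delta$.

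I expect the principal difficulty to lie in the first step: turning the conditional-in-outer-probability bootstrap statements into genuine convergence of the random critical value $q(S_n^*;1-\delta)$, and then lawfully combining a conditional limit theorem (for the bootstrap) with an unconditional one (for $T_n$) --- a combination that is legitimate only because the bootstrap limit $q_{1-\delta}$ is deterministic. The features that make everything work are that the common limit $Z$ is Gaussian with a continuous, locally strictly increasing max-norm distribution function, so distributional convergence transfers to quantiles, and that recentring the bootstrap at $\hat\theta_{p,q;n}$ makes its limit independent of the true parameter; given these, the subvector bound $\|Z^{(m)}\|_\infty \leq \|Z\|_\infty$ delivers the $\delta$-threshold almost for free.
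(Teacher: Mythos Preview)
Your proposal is correct and follows essentially the same route as the paper: both combine the bootstrap conditional limit (Lemma~\ref{thm:boostrap}), the consistency of the covariance estimators (the Corollary and Lemma~\ref{lemma:boostrap_var}), and Slutsky's lemma to identify the limiting distribution of the studentised statistic and its bootstrap counterpart, and then exploit the subvector inequality for the max-norm to bound the family-wise error. The only organisational difference is that the paper passes to the subvector quantile $q_{i_1,\ldots,i_m}(S_n^*;1-\delta)$ of the bootstrap statistic first (using $q_{1,\ldots,r} \geq q_{i_1,\ldots,i_m}$) and then takes limits, whereas you take limits first and apply $\|Z^{(m)}\|_\infty \leq \|Z\|_\infty$ at the Gaussian end; your version is more explicit about why the random critical value converges (continuity and strict monotonicity of $\Lambda$) and about the legitimacy of combining the conditional bootstrap limit with the unconditional limit for $T_n$, points the paper leaves implicit.
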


{ }

\section{Computer experiments and data example}\label{sec:numerical}

\subsection{Material and methods}

To illustrate and corroborate our theoretical findings, we conducted a series of computer experiments. { In the first set of simulations, we focused on the asymptotic behaviour of the maximum test $\varphi_\delta$ under the global null hypothesis. 
The remaining two sets of simulations were devoted to comparing the power of the total and partial AUC maximum tests against comparable alternatives.} In all instances we approximated the bootstrap quantiles based on 2000 independent repetitions. After 10,000 simulation runs, we computed the average errors. All simulations were carried out using the \textsf{R} programming language, version 4.1.0 \citep{R}. The code is available from the authors upon request.

\paragraph{Type I error simulations.} 

{ We simulated a balanced diagnostic trial (i.e. $\alpha = \beta$) with one-way layout and three factor levels. Here, we chose a Tuckey-type contrast matrix for testing all possible pairs of factor levels. We were mainly interested in the necessary sample sized that is needed to attain the aspired type I error rate of $\delta = 0.05$.}
We set $\kappa=3$ and defined the joint distribution function of the observations by
$ F(u_1,\ldots,u_6) = \mathcal{C}\big(F_1(u_1), F_2(u_2),F_3(u_3),G_1(u_4),G_2(u_5),G_3(u_6) \big)$,
where $\mathcal{C}$ is a Gaussian copula with a Spearman correlation matrix
\begin{scriptsize}
\begin{align*}
R_s = 
\begin{pmatrix} 
  1.00 & 0.79 & 0.38 & 0.00 & 0.00 & 0.00 \\
  0.79 & 1.00 & 0.79 & 0.00 & 0.00 & 0.00 \\
  0.38 & 0.79 & 1.00 & 0.00 & 0.00 & 0.00 \\
  0.00 & 0.00 & 0.00 & 1.00 & 0.79 & 0.38 \\
  0.00 & 0.00 & 0.00 & 0.79 & 1.00 & 0.79 \\
  0.00 & 0.00 & 0.00 & 0.38 & 0.79 & 1.00
\end{pmatrix}\,.
\end{align*}
\end{scriptsize}

{Note that the two blocks of zeros reflect the assumed independence between diseased and non-diseased population, cf. Definition \ref{def:model}.}
We chose $F_1$, $G_1$, $F_2$, $G_2$, $F_3$, $G_3$ as the cumulative distribution functions of $\mathcal{N}(0,1)$, $\mathcal{N}(0.5,1)$, $\exp\big(\mathcal{N}(0,1)\big)$, $\exp\big( \mathcal{N}(0.5,1)\big)$, $\big[1+\exp\big(-\mathcal{N}(0,1)\big)\big]^{-1}$, $\big[1+\exp\big(-\mathcal{N}(0.5,1)\big)\big]^{-1}$, respectively. We tested the global hypothesis $H_0 \colon C^\top\theta_{p,q} = 0$ for a $3 \times 3$ Tukey-type all pair comparison contrast matrix, i.e. the contrast vectors are $c_1 = (1,-1,0)^\top$, $c_2 = (0,1,-1)^\top$, and $c_3 = (1,0,-1)^\top$. The type I error rates of $\varphi_\delta$ were computed for different pairs $(p,q) \in \{0,0.2,\ldots,1\}\times\{0,0.2,\ldots,1\}$ and are displayed in Table \ref{table:alpha}. 

\paragraph{Power comparison}

{We compared the power of two partial AUC maximum tests, $\varphi_\delta(0.6,0.4)$ and $\varphi_\delta(0.8,0.6)$, and the total AUC maximum test $\varphi_\delta(1,0)$ against the fixed alternatives $H_1 \colon C^\top \theta_{0.6,0.4} = \lambda \, v$, $H_1' \colon C^\top\theta_{0.8,0.6} = \lambda \, v$ and $H_1'' \colon C^\top\theta_{1,0} = \lambda \, v$, respectively, where the scalars $\lambda \in \Rbb$ determine the size of the alternatives and the unit vector $v$ their direction. We considered different settings, varying in $C$, $v$, and $\lambda$. In all scenarios, the level of significance was $\delta = 0.05$.

\begin{enumerate}[label=(\alph*),wide,labelindent=0pt]
\item For the first set of power comparisons, the scenario was almost as in the type I error simulations. The contrast matrix and the marginal distributions $F_1,F_2,F_3,G_1,G_2$ were chosen as above. This choice implies that the alternatives $H_1$, $H_1'$, and $H_1''$ all point into the direction $v = 2^{-1/2}(-1,1,0)^\top$. The parameter $\mu$ of the distribution $G_3 \sim \big[1+\exp\big(-\mathcal{N}(\mu,1)\big)\big]^{-1}$ was then tuned such that $C^\top\theta \approx \lambda \, v$ for the respectively aspired effect size $\lambda$.
The copula $\mathcal{C}$ to define the joint distribution was chosen as in the type I error simulation. The results of this set of power comparisons are displayed in Table \ref{table:beta}

\item Here, we simulated a balanced two-factorial crossed design. The first factor ($A$) has three levels, the second factor ($B$) has two levels, so that $\kappa = 6$. We chose the contrast matrix such that $H_0 \colon C^\top\theta_{p,q} = 0$ was equivalent to testing the hypothesis of no interaction effect $A\times B$, i.e. 
\begin{align*}
C = \begin{pmatrix}
\hphantom{-}\frac{2}{3} & -\frac{1}{3} & -\frac{1}{3} \\
 -\frac{1}{3} & \hphantom{-} \frac{2}{3} & -\frac{1}{3} \\
 -\frac{1}{3} & -\frac{1}{3} &\hphantom{-} \frac{2}{3}
\end{pmatrix}
\otimes 
\begin{pmatrix}
\hphantom{-}\frac{1}{2} & -\frac{1}{2}  \\
 -\frac{1}{2} & \hphantom{-} \frac{1}{2}
\end{pmatrix}\,,
\end{align*}
where $\otimes$ denotes the Kronecker product (cf. Brunner et al. \cite[p. 268]{brunner2017}). The underlying data distribution was again defined via a gaussian copula, $\tilde{\mathcal{C}}(F_1,\ldots,F_6,G_1,\ldots,G_6)$, with spearman correlation matrix
\begin{scriptsize}
\begin{align*}
R_s = 
\begin{pmatrix}
  1.00  & 0.86 & 0.73 & 0.61 & 0.45 & 0.36 & 0     & 0     & 0     & 0     & 0     & 0 \\
  0.86 & 1.00  & 0.86 & 0.73 & 0.61 & 0.45 & 0     & 0     & 0     & 0     & 0     & 0 \\
  0.73 & 0.86 & 1.00  & 0.86 & 0.73 & 0.61 & 0     & 0     & 0     & 0     & 0     & 0 \\
  0.61 & 0.73 & 0.86 & 1.00  & 0.86 & 0.73 & 0     & 0     & 0     & 0     & 0     & 0 \\
  0.45 & 0.61 & 0.73 & 0.86 & 1.00  & 0.86 & 0     & 0     & 0     & 0     & 0     & 0 \\
  0.36 & 0.45 & 0.61 & 0.73 & 0.86 & 1.00  & 0     & 0     & 0     & 0     & 0     & 0 \\
 	0   & 0     & 0     & 0     & 0     & 0     &   1.00  & 0.86 & 0.73 & 0.61 & 0.45 & 0.36  \\
    0   & 0     & 0     & 0     & 0     & 0     &   0.86 & 1.00  & 0.86 & 0.73 & 0.61 & 0.45 \\
    0   & 0     & 0     & 0     & 0     & 0     &   0.73 & 0.86 & 1.00  & 0.86 & 0.73 & 0.61 \\
    0   & 0     & 0     & 0     & 0     & 0     &   0.61 & 0.73 & 0.86 & 1.00  & 0.86 & 0.73 \\
    0   & 0     & 0     & 0     & 0     & 0     &  0.45 & 0.61 & 0.73 & 0.86 & 1.00  & 0.86  \\
    0   & 0     & 0     & 0     & 0     & 0     & 0.36 & 0.45 & 0.61 & 0.73 & 0.86 & 1.00   \\
\end{pmatrix}\,.
\end{align*}
\end{scriptsize}
We defined $F_1,\ldots,F_6,G_1,\ldots,G_6$ as the cumulative distribution functions of $\big[1+\exp\big(-\mathcal{N}(0,1)\big)\big]^{-1}$, $\exp\big(\mathcal{N}(0,1)\big)$, $\mathcal{N}(0,1)$, $\big[1+\exp\big(-\mathcal{N}(0,1)\big)\big]^{-1}$, $\exp\big(\mathcal{N}(0,1)\big)$, $\mathcal{N}(0,1)$,$\big[1+\exp\big(-\mathcal{N}(0,1)\big)\big]^{-1}$,$\exp\big(\mathcal{N}(0,1)\big)$, $\mathcal{N}(0,1)$, $\big[1+\exp\big(-\mathcal{N}(0,1)\big)\big]^{-1}$,$\exp\big(\mathcal{N}(0,1)\big)$, $\mathcal{N}(\mu,1)$, respectively. Due to this choice and the shape of $C$, the alternatives $H$, $H'$, $H''$ all pointed into the direction $v = 12^{-1/2} (1,-1,1,-1,-2,2)^\top$.
\end{enumerate} 
}

{
\paragraph{Data example} 

We tested the null hypothesis
\begin{align*}
H_0 \colon 
\begin{pmatrix}
-1 & \hphantom{-}1 & \hphantom{-}0 \\
-1 & \hphantom{-}0        & \hphantom{-}1 \\
\hphantom{-}0  &     -1 & \hphantom{-}1
\end{pmatrix}
\begin{pmatrix}
\theta_{p,q}(1) \\
\theta_{p,q}(2) \\
\theta_{p,q}(3)
\end{pmatrix}
= 0\,.
\end{align*} 
for different values of $p$ and $q$. The p-values were computed based on a bootstrap statistic. The bootstrap distribution was approximated based on 2000 iterations. The results are presented in Table \ref{table:data}. The variations in $p$ and $q$ also induce a multiple testing problem. This multiplicity is not accounted for by our multiple contrast test procedure. We therefore also present p-values that are adjusted means of the Bonferroni--Holm method.

\subsection{Results}
Our type I error simulations demonstrate that $\varphi_\delta$ rapidly attains the desired size of $0.05$. However, the necessary sample size for this observation varied significantly between the different settings. {In some cases the test is rather accurate for small sample sizes around $n = 30$, whereas for other constellations $n=80$ observations per group are necessary. We believe that two factors are mainly responsible for this variation, which we will lay out in the next section.}

As to the power simulations, the type II error rates were also quite sensitive to the choice of $p$ and $q$. In any case, we observe that the considered partial AUC maximum tests detected the deviances from the global null hypothesis more frequently than the total AUC test, at least for sample sizes sufficiently large for the tests to attain their nominal sizes as indicated by Table \ref{table:alpha}.

In our data example, the statistical evidence for a difference in the biomarkers' performances varied substantially with different choices for $p$ and $q$. Even though no adjusted p-value was below the critical threshold of $0.05$, we would argue that the exploratory character of the data evaluation in combination with our statistical results would justify the planning of a prospective diagnostic study to examine the performance differences of the three biomarkers. For such a study, the clinically relevant curve segment should be defined in advance by the medical investigator.

\begin{table}[h!]
\captionsetup{font=footnotesize}
\begin{minipage}{\textwidth}
\begin{adjustbox}{width=0.8\textwidth,center}
\begin{tabular}{ccccccccccccccccccccccc}\toprule[1.5pt]
         &&\multicolumn{19}{c}{$n$} \\ \cmidrule{3-23}
 $(p\,,\,q)$   &&  30      && 40     &&    50   && 60     && 70     && 80       && 90       && 100    && 110    &&   120 && 130\\ \midrule[1.2pt]
 (0.2\,,\,0.0) &&  0.0088  && 0.0180 && 0.0254  && 0.0307 && 0.0345 && 0.0379 && 0.0388 && 0.0393 && 0.0414 && 0.0461 && 0.0430 \\
 (0.4\,,\,0.0) &&  0.0272  && 0.0378 && 0.0414  && 0.0410 && 0.0404 && 0.0441 && 0.0488 && 0.0443 && 0.0499 && 0.0477 && 0.0469\\
 (0.4\,,\,0.2) &&  0.0119  && 0.0207 && 0.0309  && 0.0376 && 0.0409 && 0.0403 && 0.0448 && 0.0428 && 0.0476 && 0.0466 && 0.0451\\
 (0.6\,,\,0.0) &&  0.0400  && 0.0428 && 0.0435  && 0.0441 && 0.0502 && 0.0478 && 0.0465 && 0.0435 && 0.0454 && 0.0441 && 0.0504\\
 (0.6\,,\,0.2) &&  0.0308  && 0.0215 && 0.0327  && 0.0310 && 0.0376 && 0.0418 && 0.0420 && 0.0480 && 0.0422 && 0.0427 && 0.0486\\
 (0.6\,,\,0.4) &&  0.0140  && 0.0222 && 0.0273  && 0.0354 && 0.0416 && 0.0434 && 0.0440 && 0.0465 && 0.0454 && 0.0458 && 0.0468\\ 
 (0.6\,,\,0.6) &&  0.0168  && 0.0200 && 0.0340  && 0.0334 && 0.0372 && 0.0402 && 0.0416 && 0.0401 && 0.0442 && 0.0463 && 0.0508\\ 
 (0.8\,,\,0.0) &&  0.0392  && 0.0443 && 0.0448  && 0.0467 && 0.0484 && 0.0472 && 0.0482 && 0.0526 && 0.0501 && 0.0459 && 0.0476\\ 
 (0.8\,,\,0.2) &&  0.0397  && 0.0436 && 0.0501  && 0.0482 && 0.0482 && 0.0485 && 0.0478 && 0.0536 && 0.0510 && 0.0511 && 0.0507\\ 
 (0.8\,,\,0.4) &&  0.0406  && 0.0431 && 0.0479  && 0.0508 && 0.0465 && 0.0505 && 0.0475 && 0.0501 && 0.0518 && 0.0480 && 0.0502\\ 
 (0.8\,,\,0.6) &&  0.0442  && 0.0420 && 0.0471  && 0.0496 && 0.0503 && 0.0519 && 0.0460 && 0.0498 && 0.0486 && 0.0487 && 0.0490\\ 
 (0.8\,,\,0.8) &&  0.0435  && 0.0463 && 0.0484  && 0.0480 && 0.0478 && 0.0508 && 0.0477 && 0.0484 && 0.0509 && 0.0507 && 0.0498\\
 (1.0\,,\,0.0) &&  0.0409  && 0.0457 && 0.0447  && 0.0470 && 0.0498 && 0.0553 && 0.0461 && 0.0503 && 0.0514 && 0.0536 && 0.0461\\ 
 (1.0\,,\,0.2) &&  0.0431  && 0.0486 && 0.0487  && 0.0482 && 0.0469 && 0.0505 && 0.0506 && 0.0504 && 0.0479 && 0.0459 && 0.0467\\ 
 (1.0\,,\,0.4) &&  0.0377  && 0.0437 && 0.0458  && 0.0433 && 0.0462 && 0.0459 && 0.0493 && 0.0497 && 0.0493 && 0.0463 && 0.0520\\ 
 (1.0\,,\,0.6) &&  0.0323  && 0.0413 && 0.0417  && 0.0448 && 0.0460 && 0.0458 && 0.0491 && 0.0469 && 0.0479 && 0.0544 && 0.0494\\ 
 (1.0\,,\,0.8) &&  0.0079  && 0.0180 && 0.0219  && 0.0292 && 0.0336 && 0.0388 && 0.0382 && 0.0412 && 0.0466 && 0.0391 && 0.0483\\
\bottomrule[1.5pt]
\end{tabular}
\end{adjustbox}
\caption{Type I error rates for the partial AUC maximum test with Tukey-type contrast matrix.}\label{table:alpha}
\end{minipage}
\end{table}

\begin{table}[h!]
\captionsetup{font=footnotesize}
\begin{minipage}{\textwidth}
\begin{adjustbox}{width= 0.8\textwidth, center}
\begin{tabular}{ccccccccccccccccccc}\toprule[1.5pt]
                       &&               &&                              \multicolumn{15}{c}{$n$}                        \\ \cmidrule{5-19}
  && $\lambda$ &&  30   &&   40     &&    50   && 60     && 70     && 80       && 90   && 100  \\ \midrule[1.2pt]
 $\varphi_\delta(0.8,0.6)$ &&   0.107   && 0.6297  && 0.8504 && 0.9472  && 0.9835 && 0.9953 && 0.9986 && 0.9995 && 0.9999 \\	
 $\varphi_\delta(0.8,0.6)$ &&   0.061   && 0.1858  && 0.3352 && 0.4736  && 0.6038 && 0.6946 && 0.7745 && 0.8287 && 0.8768 \\ 
 $\varphi_\delta(0.8,0.6)$ &&   0.021   && 0.0322  && 0.0493 && 0.0730  && 0.0913 && 0.1132 && 0.1273 && 0.1449 && 0.1639 \\ \midrule
 $\varphi_\delta(0.6,0.4)$ &&   0.107   && 0.4910  && 0.7447 && 0.8713  && 0.9410 && 0.9743 && 0.9887 && 0.9967 && 0.9989 \\ 
 $\varphi_\delta(0.6,0.4)$ &&   0.061   && 0.1416  && 0.2572 && 0.3659  && 0.4669 && 0.5647 && 0.6465 && 0.7040 && 0.7549 \\ 		 
 $\varphi_\delta(0.6,0.4)$ &&   0.021   && 0.0285  && 0.0427 && 0.0607  && 0.0824 && 0.0901 && 0.1035 && 0.1170 && 0.1283 \\ \midrule
 $\varphi_\delta(1,0)$     &&   0.107   && 0.4397  && 0.5985 && 0.7420  && 0.8314 && 0.9111 && 0.9472 && 0.9695 && 0.9843 \\
 $\varphi_\delta(1,0)$     &&   0.061   && 0.1633  && 0.2111 && 0.2714  && 0.3387 && 0.3871 && 0.4454 && 0.4969 && 0.5528 \\ 
 $\varphi_\delta(1,0)$     &&   0.021   && 0.0548  && 0.0651 && 0.0693  && 0.0769 && 0.0811 && 0.0878 && 0.0934 && 0.0976  \\ 
\bottomrule[1.5pt]
\end{tabular}
\end{adjustbox}
\caption{Empirical power comparison of total and partial AUC maximum tests with Tukey-type contrast matrix for different effect sizes.} \label{table:beta}
\end{minipage}
\end{table}  

\begin{table}[h!]
\captionsetup{font=footnotesize}
\begin{minipage}{\textwidth}
\begin{adjustbox}{width= 0.8\textwidth, center}
\begin{tabular}{ccccccccccccccccccc}\toprule[1.5pt]
                       &&               &&                              \multicolumn{15}{c}{$n$}                        \\ \cmidrule{5-19}
  && $\lambda$ &&  30   &&   40     &&    50   && 60     && 70     && 80       && 90   && 100  \\ \midrule[1.2pt]
 $\varphi_\delta(0.8,0.6)$ &&   0.054  && 0.6012  && 0.7970 && 0.9028  && 0.9563 && 0.9816 && 0.9921 && 0.9973 && 0.9989 \\	
 $\varphi_\delta(0.8,0.6)$ &&   0.035  && 0.2877  && 0.4244 && 0.5493  && 0.6556 && 0.7414 && 0.8086 && 0.8616 && 0.8980 \\ 
 $\varphi_\delta(0.8,0.6)$ &&   0.018  && 0.0888  && 0.1298 && 0.1677  && 0.2027 && 0.2399 && 0.2744 && 0.2967 && 0.3354 \\ \midrule
 $\varphi_\delta(0.6,0.4)$ &&   0.052  && 0.4871  && 0.6859 && 0.8090  && 0.8905 && 0.9393 && 0.9707 && 0.9824 && 0.9914 \\ 
 $\varphi_\delta(0.6,0.4)$ &&   0.035  && 0.2302  && 0.3551 && 0.4518  && 0.5551 && 0.6354 && 0.7125 && 0.7584 && 0.8169 \\ 		 
 $\varphi_\delta(0.6,0.4)$ &&   0.018  && 0.0753  && 0.1062 && 0.1336  && 0.1684 && 0.1965 && 0.2228 && 0.2463 && 0.2788 \\ \midrule
 $\varphi_\delta(1,0)$     &&   0.055  && 0.3764  && 0.5604 && 0.6902  && 0.7925 && 0.8751 && 0.9226 && 0.9505 && 0.9693 \\
 $\varphi_\delta(1,0)$     &&   0.035  && 0.1614  && 0.2447 && 0.3224  && 0.3990 && 0.4754 && 0.5355 && 0.5989 && 0.6546 \\ 
 $\varphi_\delta(1,0)$     &&   0.018  && 0.0600  && 0.0847 && 0.1042  && 0.1203 && 0.1391 && 0.1619 && 0.1746 && 0.1953  \\ 
\bottomrule[1.5pt]
\end{tabular}
\end{adjustbox}
\caption{Empirical power comparison of total and partial AUC maximum tests for interaction effects in a two-factor crossed design for different effect sizes.} \label{table:beta_2}
\end{minipage}
\end{table}  

\begin{table}[h!]
\captionsetup{font=footnotesize}
\begin{minipage}{\textwidth}
\begin{adjustbox}{width=0.7\textwidth,center}
\begin{tabular}{c|cccccccccccc}
 $(p\,,\,q)$   &&(1\,,\,0)&&(0.8\,,\,0.2)&&(0.6\,,\,0.4)&&(0.5\,,\,0.5)&&(0.4\,,\,0.6) \\ \midrule[1pt]
p-value        && 0.382  &&  0.259      && 0.069       && 0.015     && 0.051        \\
adj. p-value   && 0.518  &&  0.518      && 0.207       && 0.075     && 0.204       
\end{tabular}
\end{adjustbox}
\caption{Test results for the data example.}\label{table:data}
\end{minipage}
\end{table}
}

\section{Discussion}

\subsection{Accuracy of type I error rate}

{The accuracy of the type I error rate depends on the ability of the bootstrap to approximate our test statistic's distribution, which in turn hinges on the proximity of the test statistic to its weak limit \cite{hall2013}.} Our numerical experiments suggest that this proximity varies between different settings. { We have identified two main factors that we think are responsible for this variation: first, the degree to which the involved distributions $P^{(0)}$ and $P^{(1)}$ overlap; second, the location of the intervals $[F_i^{-1}(1-p),G_i^{-1}(1-q)]$. Let us briefly discuss the influence of these factors.} 

Suppose for the sake of simplicity that $\kappa=1$ and $\alpha_n = \beta_n = n/2$. The derivative $D\tau(F,G)$ is a linear approximation of the functional $\tau$ around $(F,G)$. Hence,
\begin{align*}
&n^{1/2}\big\{ \tau(\mathds{F}_{n},\mathds{G}_{n}) - \tau(F,G) \big\} 
\approx D\tau(F,G)\big[{n}^{1/2}\big((\mathds{F}_{n},\mathds{G}_{n})-(F,G)\big)\big] \\
&= n^{1/2} \int_{(a,b]} \big( \mathds{F}_n(u_j-) - F(u_j-) \big)  \dif G(u) 
 - n^{1/2} \int_{(a,b]} \big( \mathds{G}_n(v_j)\} - G(v_j) \big)   \dif F(u) \\
&\approx \sum_{j=1}^N P^{(1)}\big((u_{j-1},u_j]\big) \ n^{1/2}  \big( \mathds{F}_n(u_j-) - F(u_j-) \big) 
 -  \sum_{j=1}^N P^{(0)}\big((v_{j-1},v_j]\big) \ n^{1/2} \big( \mathds{G}_n(v_j)\} - G(v_j) \big)  \\
&= S_{1,n} + S_{2,n}\,,
\end{align*}
where $a = u_0 < \ldots < u_N = b$ and $a = v_0 < \ldots < v_N = b$ are sufficiently fine partitions and $N$ is sufficiently large. { As above, $a = F^{-1}(1-p)$ and $b = G^{-1}(1-q)$.} By the Lindeberg--Feller theorem and Slutsky's lemma, $S_{1,n}$ and $S_{2,n}$ converge weakly to two mutually independent normally distributed random variables. We invoke the Edgeworth expansion to assess the quality of this normal approximation. Let $\vartheta_j = F(u_j-)$, and denote with $\Tilde{F}_j$ the cumulative distribution function of $n^{-1/2} \sum_{r=1}^n\big( \ind\{\xi_{r} < u_j\} - \vartheta_j \big)\big/(\vartheta_j(1-\vartheta_j))^{1/2}$. Then  
\begin{align}\label{eq:edgeworth}
\Tilde{F}_j(x) = \Phi(x) + \frac{1-2\vartheta_j}{\big(2\pi \; \vartheta_j(1-\vartheta_j) \big)^{1/2}} \big(1-x^2\big) \exp\big(-x^2/2\big) \ n^{-1/2} + o\big(n^{-1/2}\big)\,, 
\end{align}  
as $n \to \infty$ \citep[p. 363]{gut2013}. Of course, the rate $O(n^{-1/2})$ with which $\tilde{F}_j(x)$ converges to $\Phi(x)$ is not affected by the involved model parameters. However, the constants $\vartheta_j$ in the Edgeworth expansions \eqref{eq:edgeworth} { may severely impede the approximation. These constants are sensitive to the shape of the distributions $P^{(0)}$, $P^{(1)}$ as well as the choice of $p$ and $q$.

First, suppose that $a$ and $b$ are very large numbers. This may occur with choices $p <  \varepsilon$ and $q > 1-\varepsilon$ for small $\varepsilon > 0$. The resulting values $F(u_j-) = \vartheta_j$ will be very close to one, implying by virtue of equation \eqref{eq:edgeworth} that 
$F_j(x)$ is not at all close to $\Phi(x)$. Second, if the distributions $P^{(0)}$ and $P^{(1)}$ are well separated with $\mathrm{med}(P^{(1)}) \gg \mathrm{med}(P^{(0)})$, the addends where $F(u_j-)$ is close to one will get a lot of weight within the sums $S_{1,n}$. This again leads to a poor approximation of $S_{1,n}$ by a multivariate normal distribution. Heuristically, both phenomena are plausible if we recall the estimator's shape as a trimmed Mann--Whitney statistic as displayed in line \eqref{eq:trimmed_MW}. A small overlap between $P^{(0)}$ and $P^{(1)}$ imply that only a small number of indicators $\ind\big\{ \xi_{(r)} < \eta_{(s)}\big\}$ will be non-zero. Choices $p <  \varepsilon$ and $q > 1-\varepsilon$ severely reduce the number of addends in \eqref{eq:trimmed_MW} as $\lceil \alpha_n(1-\varepsilon) \rceil +1 \approx \alpha_n$ and $\lceil \beta_n(1-q) \rceil \approx 1$. In both scenarios the effective sample size is drastically reduced. Especially the latter phenomenon might explain the observed variation in the necessary sample sizes in our simulations to attain the aspired type I error rate (cf. Table \ref{table:alpha}).  }

\subsection{Power comparison}

{In our computer experiments, partial AUC maximum tests seemed more successful than the total AUC maximum test in detecting comparable alternatives. We want to give a theoretical explanation for this phenomenon. To this end, we consider two fixed alternatives, $H_1 \colon C^\top \theta_{p,q} = x$ and $H_1' \colon C^\top \theta_{1,0} = x$, for some vector $x \neq 0$. Let us look at the power of the partial AUC maximum test and the total AUC maximum test against the alternatives $H_1$ and $H_1'$, respectively.

First, observe that the power of the partial AUC maximum test against $H_1$ is
\begin{align*}
\ew_{H_1} \, \varphi_\delta(p,q) 
&= \pr_{H_1} \Big\{ \big\| T_n  + (\alpha_n+\beta_n)^{1/2} V_{p,q;n}^{-1/2} \, x \big\|_\infty > 	q(S_n^*;1-\delta) \Big\} \\
&= \pr_{H_1} \Big\{ \big\| \tilde{w}_{p,q}  + (\alpha_n+\beta_n)^{1/2} V_{p,q;n}^{-1/2} \, x \big\|_\infty > 	q(\tilde{w}_{p,q};1-\delta) \Big\} + o(1)\,,
\end{align*}  
where $\tilde{w}$ is a normally distributed random vector with mean 0 and covariance matrix $V_{p,q}^{-1/2}C^\top \Sigma_{p,q} C V_{p,q}^{-1/2}$, and $q(\tilde{w};1-\delta)$ is its $1-\delta$ equicoordinate quantile. Furthermore, $V_{p,q} = \mathrm{diag}(C^\top \Sigma_{p,q} C)$. Analogously, the power of the total AUC maximum test against $H_1'$ is 
\begin{align*}
\ew_{H_1'} \, \varphi_\delta(1,0) =\pr_{H_1'} \Big\{ \big\| \tilde{w}_{1,0}  + (\alpha_n+\beta_n)^{1/2} V_{1,0;n}^{-1/2} \, x \big\|_\infty > 	q(\tilde{w}_{1,0};1-\delta) \Big\} + o(1)\,.
\end{align*} 
We see that for fixed sample size the power of both tests are mainly determined by the size of the elements of $V_{p,q;n}$ and $V_{1,0;n}$, respectively. Moreover, if $V_{p,q;n} < V_{1,0;n}$ elementwise, the partial AUC maximum test will tend to be more powerful against $H_0$ than the total AUC maximum test against $H_0'$. Let us therefore inspect the difference of their asymptotic versions $V_{1,0}(i)-V_{p,q}(i)$, for which we have explicit displays. By definition, 
\begin{align*}
(C^\top \Sigma_{1,0} C)(i,i) - (C^\top \Sigma_{p,q} C)(i,i) = \big \langle c_i\,,\, (\Sigma_{1,0}-\Sigma_{p,q}) \, c_i\big\rangle\,,
\end{align*}
where $c_i$ is the $i$th column of the contrast matrix $C$. A sufficient condition for last display to be positive, regardless of the contrast vector, is that $\Delta := \Sigma_{1,0}-\Sigma_{p,q}$ is positive definite. Since the definiteness of this matrix is hard to ascertain in the general case, we will focus on the special cases where $\kappa = 2$. This corresponds to a comparison of two biomarkers. In this case, $\Delta = \Sigma_{0,1}-\Sigma_{p,q}$ is a symmetric $\Rbb^{2 \times 2}$ matrix with diagonal elements
\begin{align*}
\Delta(i,i) 
&= \int_{\Rbb^2\setminus(a_i,b_i]^2} \ew\big[B_{i}(u)B_{i}(v)\big] \ P^{(1)}_i \otimes P^{(1)}_i(\dif v, \dif u) \\
& \hspace{2cm} +  \int_{\Rbb^2\setminus(a_i,b_i]^2} \ew\big[B_{\kappa + i}(u)B_{\kappa + i}(v)\big] \ P^{(0)}_i \otimes P^{(0)}_i(\dif v, \dif u)\,,  
\end{align*}
which we see by an inspection of the proof of Theorem \ref{thm:convergence2}. Under the condition that $F(x),G(x) < 1$ for all $x< \infty$, we conclude that $\Delta(i,i) > 0$ because  $\ew\big[ B_i(u)B_i(v) \big] = F_i(u\wedge v) - F_i(u)F_i(v) > 0$ and $\ew\big[ B_{\kappa + i}(u)B_{\kappa + i}(v) \big] = G_i(u\wedge v) - G_i(u)G_i(v) > 0$ for every $u,v \in \Rbb$ and $r\in \{1,\ldots,\kappa\}$.
The off-diagonal value of $\Delta$ is 
\begin{align*}
\Delta(1,2) &= \int_{\big((a_1,b_1] \times (a_2,b_2]\big)^c} \ew\big[ B_1(u)B_2(v) \big] \ P^{(1)}_1 \otimes P^{(1)}_2(\dif u, \dif v)  \\
&\hspace{2cm}+ \int_{\big((a_1,b_1] \times (a_2,b_2]\big)^c} \ew\big[ B_1(u)B_2(v) \big] \ P^{(0)}_1 \otimes P^{(0)}_2(\dif u, \dif v)  \,.  									
\end{align*} 
The eigenvalues $\lambda_1 \geq \lambda_2$ of $\Delta$ are the roots of $p(\lambda)=\{\lambda-\Delta(1,1)\}\{\lambda-\Delta(2,2)\} -\{\Delta(1,2)\}^2$. The first eigenvalue, $\lambda_1$, is always positive. The second eigenvalue, $\lambda_2$, is positive if and only if $\Delta(1,1)\,\Delta(2,2) > \{\Delta(1,2)\}^2$. { This will be the case if no strong dependencies between $\sigma\{\eta_{s}(1) \colon s \in \Nbb\}$ and $\sigma\{\eta_{s}(2) \colon s \in \Nbb\}$ or between $\sigma\{\xi_{r}(1) \colon r \in \Nbb_+\}$ and $\sigma\{\xi_{r}(2) \colon r \in \Nbb\}$ are manifest. } Hence, the matrix $\Delta$ is either positive semidefinite or indefinite, depending on the cross covariances of $(B_1,B_2,B_3,B_{4})$. However, it is never negative definite.   

What does this mean heuristically? The partial AUC estimator has asymptotically smaller variances than the total AUC estimator, giving the former an advantage over the latter. However, if $\sigma\{\eta_{s}(1) \colon s \in \Nbb\}$ and $\sigma\{\eta_{s}(2) \colon s \in \Nbb\}$ or $\sigma\{\xi_{r}(1) \colon r \in \Nbb_+\}$ and $\sigma\{\xi_{r}(2) \colon r \in \Nbb\}$ are strongly dependent, we might observe a rather peculiar flow of statistical information between the ROC curves of the two biomarkers: the clinically irrelevant segments of one ROC curve might carry information about the clinically relevant segment of the other ROC curve. This additional information is available only to the total AUC estimator, which might to some degree compensate for the disadvantage imposed by its larger variances. This could result in an indefinite Matrix $\Delta$, corresponding to a situation where for some directions of $x$ the partial AUC maximum test has more power and for other constellations the total AUC maximum test could be more powerful. This would depend on the orientation of $x$ as well as the ellipsoids generated by $\Sigma_{p,q}$ and $\Sigma_{1,0}$. This entire ensemble depends of course on the contrast matrix $C$ and the underlying distributions $P^{(0)}$,
$P^{(1)}$.  

At this point we want to emphasize that the preceding discussion was entirely based on heuristic arguments. A more formal approach would focus on a comparison of the local asymptotic power functions of the total and partial AUC maximum tests. These power functions are derived as the limits of the usual power functions evaluated at local alternatives $x = O(n^{-1/2})$ as $n \to \infty$. To find these limits, our asymptotic results would need to be extended so as to hold uniformly over small sets of marker distributions, which is beyond the scope of this paper.   
}

{

\subsection{Conclusion}

To conclude the paper, we want to discus the consequences of our findings for possible applications. To begin with, let us summarize how our choice of $p$ and $q$ may influence the result of the statistical analysis. First, for a given sample size, the choice of $p$ and $q$ affects the accuracy of the bootstrap approximation of $\hat{\theta}_{p,q}$: we have seen that large values of $F_i^{-1}(1-p)$ tend to increase the sample size that is necessary for a good approximation. Second, for a given alternative $H_1 \colon C^\top \theta_{p,q} = x(p,q) \neq 0$, shrinking the interval $[F_i^{-1}(1-p),G_i^{-1}(1-q)]$ tends to increase the power $\ew_{H_1} \, \varphi_\delta(p,q)$ by reducing the asymptotic variances $V_{p,q}$. In theory, ever smaller intervals $[F_i^{-1}(1-p),G_i^{-1}(1-q)]$ will lead to ever smaller variances $V_{p,q}$. Third, a change in $p$ and $q$ will of course alter the effect size $C^\top \theta_{p,q} = x$. In case of multiple ROC curve crossings, shrinking $[F_i^{-1}(1-p),G_i^{-1}(1-q)]$ may at first increase $x$. Ultimately, however, the effect size will vanish as $\big| F_i^{-1}(1-p)-G_i^{-1}(1-q)\big| \to 0$ because in this case $\theta_{p,q} \to 0$, which means that then also $x(p,q) = C^\top \theta_{p,q} \to 0$. This of course tends to reduce $\ew_{H_1} \, \varphi_\delta(p,q)$. Thus, the choice of $p$ and $q$ influences $\ew_{H_1} \, \varphi_\delta(p,q)$ in different ways that are to some extent contrary to each other. For a given pair of marker distributions $P^{(0)},P^{(1)}$, we could try to choose $p$ and $q$ so as to maximize $\ew_{H_1} \, \varphi_\delta(p,q)$ by balancing out those mechanisms. However, the pair $(p,q)$ yielding the optimal combination of $V_{p,q}$ and $x(p,q)$ certainly depends on the underlying distributions $P^{(0)},P^{(1)}$ and can therefore not be known a priori without additional information. 

In view of the above consideration, we have the following recommendation for the use of $\theta_{p,q}$ as a statistical parameter, depending on the study type. For exploratory analysis in a retrospective study, we would conduct inference for several choices of $p$ and $q$. This may prevent candidate biomarkers from being thrown out of the research process too early just because they don't seem superior to the current diagnostic standard in terms of their total AUCs. Instead we might uncover statistically significant differences for some clinically relevant ROC curve segments. This approach leads of course to a more general multiple testing problem based on the family of null hypotheses $\big\{H_0^{i,j,k} \colon \langle c_i \,,\, \theta_{p_j,q_k} \rangle = 0 \text{ with } i=1,\ldots,r$; $j=1,\ldots,s$; $k=1,\ldots, t \big\}$. We think that our theoretical approach is suitable for constructing a test procedure for this problem as well. The main ingredient would be an extension of our asymptotic results so as to cover the asymptotic distribution of vectors $\big( \hat{\theta}_{p_1,q_1}, \ldots, \hat{\theta}_{p_s,q_t} \big)^\top$. Our empirical process approach is certainly suitable for this problem, but a detailed derivation seems to be out of scope of the present paper.

By contrast, a confirmatory prospective study certainly demands a prior specification of the parameter of interest. With certain information from prior exploratory studies available, a proper sample size planning should be possible. In this case a choice directed at maximizing $\ew_{H_1} \, \varphi_\delta(p,q)$ may enable a more economical decision regarding the sample sizes. Lastly,   
we have to emphasize that the most important criterion for a good choice of $p$ and $q$ is a justification why the corresponding parameter $\theta_{p,q}$ is really appropriate to answer the research question. In our context, this means explaining why we want to assess a biomarker's performance based solely on the ROC curve segment defined by $p$ and $q$. This in turn is a question not so much for the statistician but for the medical investigators of the study.
}

\section*{Acknowledgements}
The authors gratefully acknowledge financial support from the Deutsche Forschungsgemeinschaft (grant nos. DFG KO 4680/3-2, DFG KO 4680/4-1).
We are grateful to the Charité Department of Radiology for providing the example data set. We thank two anonymous reviewers for their valuable comments which led to considerable improvements in the manuscript.

\bibliographystyle{myjmva}
\bibliography{references}

\begin{thebibliography}{31}
\expandafter\ifx\csname natexlab\endcsname\relax\def\natexlab#1{#1}\fi
\providecommand{\bibinfo}[2]{#2}
\ifx\xfnm\relax \def\xfnm[#1]{\unskip,\space#1}\fi
\bibitem[{Akritas and Arnold(1994)}]{akritas1994}
\bibinfo{author}{M.~G. Akritas}, \bibinfo{author}{S.~F. Arnold},
  \bibinfo{title}{Fully nonparametric hypotheses for factorial designs {I}:
  Multivariate repeated measures designs}, \bibinfo{journal}{J. Am. Stat.
  Assoc.} \bibinfo{volume}{89} (\bibinfo{year}{1994})
  \bibinfo{pages}{336--343}.
\bibitem[{Bandos et~al.(2017)Bandos, Guo and Gur}]{bandos2017}
\bibinfo{author}{A.~I. Bandos}, \bibinfo{author}{B.~Guo},
  \bibinfo{author}{D.~Gur}, \bibinfo{title}{Jackknife variance of the partial
  area under the empirical receiver operating characteristic curve},
  \bibinfo{journal}{Stat. Methods Med. Res.} \bibinfo{volume}{26}
  (\bibinfo{year}{2017}) \bibinfo{pages}{528--541}.
\bibitem[{Blanche et~al.(2020)Blanche, Dartigues and Riou}]{blanche2020closed}
\bibinfo{author}{P.~Blanche}, \bibinfo{author}{J.-F. Dartigues},
  \bibinfo{author}{J.~Riou}, \bibinfo{title}{A closed max-t test for multiple
  comparisons of areas under the {ROC} curve}, \bibinfo{journal}{Biometrics}
  (\bibinfo{year}{2020}).
\bibitem[{Bretz et~al.(2001)Bretz, Genz and A.~Hothorn}]{bretz2001numerical}
\bibinfo{author}{F.~Bretz}, \bibinfo{author}{A.~Genz},
  \bibinfo{author}{L.~A.~Hothorn}, \bibinfo{title}{On the numerical
  availability of multiple comparison procedures}, \bibinfo{journal}{Biom. J.}
  \bibinfo{volume}{43} (\bibinfo{year}{2001}) \bibinfo{pages}{645--656}.
\bibitem[{Brunner et~al.(2017)Brunner, Konietschke, Pauly and
  Puri}]{brunner2017}
\bibinfo{author}{E.~Brunner}, \bibinfo{author}{F.~Konietschke},
  \bibinfo{author}{M.~Pauly}, \bibinfo{author}{M.~L. Puri},
  \bibinfo{title}{Rank-based procedures in factorial designs: Hypotheses about
  non-parametric treatment effects}, \bibinfo{journal}{J. R. Stat. Soc. Series
  B Stat. Methodol.} \bibinfo{volume}{79} (\bibinfo{year}{2017})
  \bibinfo{pages}{1463--1485}.
\bibitem[{Carrington et~al.(2020)Carrington, Fieguth, Qazi, Holzinger, Chen,
  Mayr and Manuel}]{carrington2020}
\bibinfo{author}{A.~M. Carrington}, \bibinfo{author}{P.~W. Fieguth},
  \bibinfo{author}{H.~Qazi}, \bibinfo{author}{A.~Holzinger},
  \bibinfo{author}{H.~H. Chen}, \bibinfo{author}{F.~Mayr},
  \bibinfo{author}{D.~G. Manuel}, \bibinfo{title}{A new concordant partial
  {AUC} and partial c statistic for imbalanced data in the evaluation of
  machine learning algorithms}, \bibinfo{journal}{BMC Medical Inform. Decis.
  Mak.} \bibinfo{volume}{20} (\bibinfo{year}{2020}) \bibinfo{pages}{1--12}.
\bibitem[{Dean and Voss(1999)}]{dean1999}
\bibinfo{author}{A.~Dean}, \bibinfo{author}{D.~Voss}, \bibinfo{title}{Design
  and analysis of experiments}, \bibinfo{publisher}{Springer},
  \bibinfo{address}{New York, NY}, \bibinfo{year}{1999}.
\bibitem[{Dodd and Pepe(2003)}]{dodd2003}
\bibinfo{author}{L.~E. Dodd}, \bibinfo{author}{M.~S. Pepe},
  \bibinfo{title}{Partial {AUC} estimation and regression},
  \bibinfo{journal}{Biometrics} \bibinfo{volume}{59} (\bibinfo{year}{2003})
  \bibinfo{pages}{614--623}.
\bibitem[{Gin{\'e} and Zinn(1990)}]{gine1990}
\bibinfo{author}{E.~Gin{\'e}}, \bibinfo{author}{J.~Zinn},
  \bibinfo{title}{Bootstrapping general empirical measures},
  \bibinfo{journal}{Ann. Probab.} \bibinfo{volume}{18} (\bibinfo{year}{1990})
  \bibinfo{pages}{851--869}.
\bibitem[{Gut(2013)}]{gut2013}
\bibinfo{author}{A.~Gut}, \bibinfo{title}{Probability: A Graduate Course},
  Springer Texts in Statistics, \bibinfo{publisher}{Springer},
  \bibinfo{address}{New York, NY}, \bibinfo{year}{2013}.
\bibitem[{Hall(2013)}]{hall2013}
\bibinfo{author}{P.~Hall}, \bibinfo{title}{The bootstrap and Edgeworth
  expansion}, \bibinfo{publisher}{Springer}, \bibinfo{address}{New York, NY},
  \bibinfo{year}{2013}.
\bibitem[{Hothorn et~al.(2008)Hothorn, Bretz and
  Westfall}]{hothorn2008simultaneous}
\bibinfo{author}{T.~Hothorn}, \bibinfo{author}{F.~Bretz},
  \bibinfo{author}{P.~Westfall}, \bibinfo{title}{Simultaneous inference in
  general parametric models}, \bibinfo{journal}{Biom. J.} \bibinfo{volume}{50}
  (\bibinfo{year}{2008}) \bibinfo{pages}{346--363}.
\bibitem[{Konietschke et~al.(2018)Konietschke, Aguayo and
  Staab}]{Konietschke2018}
\bibinfo{author}{F.~Konietschke}, \bibinfo{author}{R.~R. Aguayo},
  \bibinfo{author}{W.~Staab}, \bibinfo{title}{Simultaneous inference for
  factorial multireader diagnostic trials}, \bibinfo{journal}{Stat. Med.}
  \bibinfo{volume}{37} (\bibinfo{year}{2018}) \bibinfo{pages}{28--47}.
\bibitem[{Konietschke et~al.(2012)Konietschke, Hothorn and
  Brunner}]{konietschke2012}
\bibinfo{author}{F.~Konietschke}, \bibinfo{author}{L.~A. Hothorn},
  \bibinfo{author}{E.~Brunner}, \bibinfo{title}{Rank-based multiple test
  procedures and simultaneous confidence intervals},
  \bibinfo{journal}{Electron. J. Stat.} \bibinfo{volume}{6}
  (\bibinfo{year}{2012}) \bibinfo{pages}{738--759}.
\bibitem[{Lange and Brunner(2012)}]{Lange2012}
\bibinfo{author}{K.~Lange}, \bibinfo{author}{E.~Brunner},
  \bibinfo{title}{Sensitivity, specificity and {ROC}-curves in multiple reader
  diagnostic trials--a unified, nonparametric approach},
  \bibinfo{journal}{Stat. Methodol.} \bibinfo{volume}{9} (\bibinfo{year}{2012})
  \bibinfo{pages}{490--500}.
\bibitem[{Ma et~al.(2013)Ma, Bandos, Rockette and Gur}]{ma2013}
\bibinfo{author}{H.~Ma}, \bibinfo{author}{A.~I. Bandos}, \bibinfo{author}{H.~E.
  Rockette}, \bibinfo{author}{D.~Gur}, \bibinfo{title}{On use of partial area
  under the {ROC} curve for evaluation of diagnostic performance},
  \bibinfo{journal}{Stat. Med.} \bibinfo{volume}{32} (\bibinfo{year}{2013})
  \bibinfo{pages}{3449--3458}.
\bibitem[{Mammen(1992)}]{mammen1992}
\bibinfo{author}{E.~Mammen}, \bibinfo{title}{Higher order accuracy of the
  bootstrap method for smooth functionals}, \bibinfo{journal}{Scand. J. Stat.}
  \bibinfo{volume}{19} (\bibinfo{year}{1992}) \bibinfo{pages}{255--269}.
\bibitem[{McClish(1989)}]{mcclish1989}
\bibinfo{author}{D.~K. McClish}, \bibinfo{title}{Analyzing a portion of the
  {ROC} curve}, \bibinfo{journal}{Med. Decis. Making} \bibinfo{volume}{9}
  (\bibinfo{year}{1989}) \bibinfo{pages}{190--195}.
\bibitem[{Mukerjee et~al.(1987)Mukerjee, Robertson and Wright}]{mukerjee1987}
\bibinfo{author}{H.~Mukerjee}, \bibinfo{author}{T.~Robertson},
  \bibinfo{author}{F.~T. Wright}, \bibinfo{title}{Comparison of several
  treatments with a control using multiple contrasts}, \bibinfo{journal}{J. Am.
  Stat. Assoc.} \bibinfo{volume}{82} (\bibinfo{year}{1987})
  \bibinfo{pages}{902--910}.
\bibitem[{Nelsen(2006)}]{Nelsen2006}
\bibinfo{author}{R.~B. Nelsen}, \bibinfo{title}{An introduction to copulas},
  \bibinfo{publisher}{Springer}, \bibinfo{address}{New York, NY},
  \bibinfo{year}{2006}.
\bibitem[{Pollard(1984)}]{Pollard1984}
\bibinfo{author}{D.~Pollard}, \bibinfo{title}{Convergence of stochastic
  processes}, \bibinfo{publisher}{Springer}, \bibinfo{address}{New York, NY},
  \bibinfo{year}{1984}.
\bibitem[{Qin et~al.(2011)Qin, Jin and Zhou}]{Qin2011}
\bibinfo{author}{G.~Qin}, \bibinfo{author}{X.~Jin}, \bibinfo{author}{X.-H.
  Zhou}, \bibinfo{title}{Non-parametric interval estimation for the partial
  area under the {ROC} curve}, \bibinfo{journal}{Can. J. Stat.}
  \bibinfo{volume}{39} (\bibinfo{year}{2011}) \bibinfo{pages}{17--33}.
\bibitem[{{R Core Team}(2021)}]{R}
\bibinfo{author}{{R Core Team}}, \bibinfo{title}{\textsf{R}: A Language and
  Environment for Statistical Computing}, \bibinfo{organization}{R Foundation
  for Statistical Computing}, \bibinfo{address}{Vienna, Austria},
  \bibinfo{year}{2021}.
\bibitem[{van~der Vaart(2000)}]{vandervaart2000}
\bibinfo{author}{A.~van~der Vaart}, \bibinfo{title}{Asymptotic Statistics},
  \bibinfo{publisher}{Cambridge University Press}, \bibinfo{address}{New York,
  NY}, \bibinfo{year}{2000}.
\bibitem[{van~der Vaart and Wellner(1996)}]{vandervaart1996}
\bibinfo{author}{A.~W. van~der Vaart}, \bibinfo{author}{J.~Wellner},
  \bibinfo{title}{Weak Convergence and Empirical Processes: With Applications
  to Statistics}, \bibinfo{publisher}{Springer}, \bibinfo{address}{New York,
  NY}, \bibinfo{year}{1996}.
\bibitem[{Walter(2005)}]{walter2005}
\bibinfo{author}{S.~D. Walter}, \bibinfo{title}{The partial area under the
  summary {ROC} curve}, \bibinfo{journal}{Stat. Med.} \bibinfo{volume}{24}
  (\bibinfo{year}{2005}) \bibinfo{pages}{2025--2040}.
\bibitem[{Wang et~al.(2012)Wang, Ma, George and Zhou}]{wang2012}
\bibinfo{author}{X.~Wang}, \bibinfo{author}{J.~Ma},
  \bibinfo{author}{S.~George}, \bibinfo{author}{H.~Zhou},
  \bibinfo{title}{Estimation of {AUC} or partial {AUC} under
  test-result-dependent sampling}, \bibinfo{journal}{Stat. Biopharm. Res.}
  \bibinfo{volume}{4} (\bibinfo{year}{2012}) \bibinfo{pages}{313--323}.
\bibitem[{Wellner(2001)}]{wellner2001}
\bibinfo{author}{J.~A. Wellner}, \bibinfo{title}{Some converse limit theorems
  for exchangeable bootstraps}, \bibinfo{journal}{IMS Lect. Notes--Monogr.
  Ser.} \bibinfo{volume}{36} (\bibinfo{year}{2001}) \bibinfo{pages}{593--606}.
\bibitem[{Yang et~al.(2019)Yang, Lu, Lyu and Hu}]{yang2019}
\bibinfo{author}{H.~Yang}, \bibinfo{author}{K.~Lu}, \bibinfo{author}{X.~Lyu},
  \bibinfo{author}{F.~Hu}, \bibinfo{title}{Two-way partial {AUC} and its
  properties}, \bibinfo{journal}{Stat. Methods Med. Res.} \bibinfo{volume}{28}
  (\bibinfo{year}{2019}) \bibinfo{pages}{184--195}.
\bibitem[{Yang et~al.(2017)Yang, Lu and Zhao}]{yang2017}
\bibinfo{author}{H.~Yang}, \bibinfo{author}{K.~Lu}, \bibinfo{author}{Y.~Zhao},
  \bibinfo{title}{A nonparametric approach for partial areas under {ROC} curves
  and ordinal dominance curves}, \bibinfo{journal}{Stat. Sin.}
  \bibinfo{volume}{27} (\bibinfo{year}{2017}) \bibinfo{pages}{357--371}.
\bibitem[{Zhang et~al.(2002)Zhang, Zhou, Freeman~Jr. and Freeman}]{zhang2002}
\bibinfo{author}{D.~D. Zhang}, \bibinfo{author}{X.-H. Zhou},
  \bibinfo{author}{D.~H. Freeman~Jr.}, \bibinfo{author}{J.~L. Freeman},
  \bibinfo{title}{A non-parametric method for the comparison of partial areas
  under {ROC} curves and its application to large health care data sets},
  \bibinfo{journal}{Stat. Med.} \bibinfo{volume}{21} (\bibinfo{year}{2002})
  \bibinfo{pages}{701--715}.

\end{thebibliography}

\newpage

\appendix

\section{Proofs}\label{sec:proofs}

For $t \in \Rbb^{2\kappa}$ let $\pi_t \colon (\ell_\infty^{2\kappa},d^{2\kappa}_\infty) \to \Rbb$ be the coordinate projection $\pi_t(x) = x(t)$. With $\cp^{2\kappa}$ we denote the projection 
$\sigma$-field or cylindrical $\sigma$-field on $(\ell_\infty^{2\kappa},d^{2\kappa}_\infty)$, i.e. the $\sigma$-field generated by the coordinate projections. In the following, the symbol $\rightsquigarrow$ denotes weak convergence.  

\begin{proof}[Proof of Theorem \ref{thm:convergence1}]
We extend the result of Theorem \ref{thm:k=1} of the supplemental material, combining the ideas of \citet[p. 42]{vandervaart1996} and \citet[p. 97]{Pollard1984}. Essentially we use the continuous mapping theorem twice. 

First, the map $\varphi \colon (\ell_\infty^{2\kappa}([0,1)),d_\infty^{2\kappa}) \to (\ell_\infty([0,{2\kappa})),d_\infty)$,
\begin{align*}
\varphi(x_1,\ldots,x_{2\kappa})[t] = \sum_{i=1}^{2\kappa} x_i(t-i+1) \, \ind_{[i-1,i)}(t)\,, \qquad t \in [0,{2\kappa})\,,
\end{align*}
is an isometry. Hence, both $\varphi$ and $\varphi^{-1}$ are continuous, and with the continuous mapping theorem \citep[Theorem 1.3.6]{vandervaart1996} we conclude that $\varphi(x_{1,n},\ldots,x_{{2\kappa},n}) \rightsquigarrow \varphi(x_{1,0},\ldots,x_{{2\kappa},0})$ in $\ell_\infty([0,{2\kappa}))$ if and only if $(x_{1,n},\ldots,x_{{2\kappa},n}) \rightsquigarrow (x_{1,0},\ldots,x_{{2\kappa},0})$ in $\ell_\infty^{2\kappa}([0,1))$, for any sequence $\big\{(x_{1,n},\ldots,x_{{2\kappa},n}) \colon n \in \Nbb \big\} \subset (\ell_\infty^{2\kappa}([0,1)),d_\infty^{2\kappa})$. 

Now for $r \in \Nbb_+$, let $(\upsilon_{1,r},\ldots,\upsilon_{{2\kappa},r}) \colon (\ell_\infty^{2\kappa},\cp^{2\kappa}) \to \big([0,1]^{2\kappa},\cb([0,1]^{2\kappa})\big)$ be a $[0,1]^{2\kappa}$-valued random vector whose marginals are uniformly distributed over $[0,1]$. According to Sklar's Theorem \citep{Nelsen2006}, its cumulative distribution function $C$ can be chosen such that $H(t_1,\ldots,t_{2\kappa}) = C\big(F_1(t_1),\ldots,F_\kappa(t_\kappa),G_{1}(t_{\kappa+1}),\ldots,G_\kappa(t_{2\kappa})\big)$. Therefore, $\pr \big\{F_1^{-1}(\upsilon_{1,r}) \leq t_1 ,\ldots, G_{\kappa}^{-1}(\upsilon_{{2\kappa},r}) \leq t_{2\kappa} \big\} = H(t_1,\ldots,t_{2\kappa})$, and we conclude that $\big(F_1^{-1}(\upsilon_{1,r}),\ldots, G_{\kappa}^{-1}(\upsilon_{{2\kappa},r})\big)$ and $X_n$ have the same distribution on $(\ell_\infty^{2\kappa},\cp^{2\kappa})$.

Now consider $U_n = \big(U_{1,n},\ldots,U_{{2\kappa},n}\big) \in (\ell_\infty^{2\kappa}([0,1)),d_\infty^{2\kappa})$, 
\begin{align*}
U_{i,n}(t) :=  n^{-1/2}\sum_{r=1}^n \big(\ind_{\{\upsilon_{i,r} \leq t\}} - \tilde{G}(t)\big)\,,  \hspace{1cm} i \in \{1,\ldots,{2\kappa}\}\,,
\end{align*}
where $\tilde{G}$ denotes the cumulative distribution function of a $\text{Uniform}[0,1]$ distribution. For all $i \in \{1,\ldots,2\kappa\}$, Donsker's Theorem \citep[p. 96]{Pollard1984} implies that $U_{i,n} \rightsquigarrow U_{i,0}$ in $(\ell_\infty[0,1),d_\infty)$, where $U_{i,0}$ is a $\tilde{G}$-brownian bridge. By Prohorov's Theorem \citep[Thm 1.3.9]{vandervaart1996} in combination with Lemmas 1.4.3 and 1.4.4 of \cite{vandervaart1996}, we conclude that there exists a subsequence $\{U_{n_q}\}$ that converges weakly to a tight Borel measurable random element $U_0 \in (\ell_\infty^{2\kappa}[0,1),d_\infty^{2\kappa})$. Suppose there is another such subsequence $\{U_{n_r}\}$ with corresponding weak limit $V_0$. As noted before, $\varphi(U_{n_q}) \rightsquigarrow \varphi(U_0)$ and $\varphi(U_{n_r}) \rightsquigarrow \varphi(V_0)$. Both limits are Borel measurable tight random elements. We will show that $\varphi(V_0)$ and $\varphi(U_0)$ have the same distribution. For any $k \in \Nbb$, the mapping $x \mapsto (\pi_{t_1}x,\ldots,\pi_{t_k}x)\colon (\ell_\infty([0,{2\kappa})),d_\infty) \to \Rbb^k$ is continuous. By the continuous mapping theorem, $\big(\pi_{t_1}\varphi(U_{n_q}), \ldots,\pi_{t_k}\varphi(U_{n_q})\big) \rightsquigarrow \big(\pi_{t_1}\varphi(U_{0}), \ldots,\pi_{t_k}\varphi(U_{0})\big)$ as well as 
$\big(\pi_{t_1}\varphi(U_{n_r}), \ldots,\pi_{t_k}\varphi(U_{n_r})\big) \rightsquigarrow \big(\pi_{t_1}\varphi(V_{0}), \ldots,\pi_{t_k}\varphi(V_{0})\big)$. But by the multivariate Lindeberg-Feller Theorem, both limit distributions coincide. Since tight Borel probability measures on $(\ell_\infty,\cb(\ell_\infty))$ are uniquely defined by their finite dimensional marginals \citep[Lemma 1.5.3]{vandervaart1996}, we conclude that $\varphi(U_0)$ and $\varphi(V_0)$ have the same distribution. Since all subsequences of $\varphi(U_n)$ converge to the same weak limit $\varphi(U_0)$, it follows that $\varphi(U_n) \rightsquigarrow \varphi(U_0)$, which implies $U_n \rightsquigarrow U_0$. Note that $\phi(U_0)$ is a centered Gaussian process and so is $U_0$. 

With $F_i(-\infty) = 0$ and $F(\infty) = 1$, we can view each $F_i$ as an element in $(\ell_\infty,d_\infty)$.  We define the map $\tilde{H} \colon (\ell_\infty^{2\kappa}([0,1)),d^{2\kappa}_\infty) \to (\ell_\infty^{2\kappa},d^{2\kappa}_\infty)$ by $\tilde{H} \big( x_1,\ldots, x_{2\kappa} ) := \big(x_1(F_1(\cdot)), \ldots, x_{2\kappa}( G_{\kappa}(\cdot))\big)$. Observe that $d^{2\kappa}_\infty \big(\tilde{H}(x),\tilde{H}(y)\big) \leq d^{2\kappa}_\infty(x,y)$ for all $x,y \in (\ell_\infty^{2\kappa}([0,1)),d^{2\kappa}_\infty)$, which makes $\tilde{H}$ continuous. Therefore,  
\begin{align*}
\Big(
n^{-1/2}\sum_{r=1}^n \big(\ind_{\{F_1^{-1}(\upsilon_{1,r}) \leq t\}} - F_1(t)\big), \ldots,
n^{-1/2}\sum_{r=1}^n \big(\ind_{\{G_{\kappa}^{-1}(\upsilon_{{2\kappa},r}) \leq t\}} - G_{\kappa}(t)\big)
\Big)
= \tilde{H}(U_n) \rightsquigarrow \tilde{H}(U_0)\,. 
\end{align*}
However, for any $i \in \{1,\ldots,\kappa\}$, $F_i$ and $G_i$ are the cumulative distribution functions of $F_i^{-1}(\upsilon_{i,r})$ and $G_i^{-1}(\upsilon_{i,r})$, respectively, which means that $\tilde{H}(U_n)$ and $X_{n}$ have the same distribution on $(\ell_\infty^{2\kappa},\cp^{2\kappa})$. With Lemma \ref{lemma:1} from the supplemental material we conclude that $X_{n} \rightsquigarrow \tilde{H}(U_0)$ as well. From the definition of $\tilde{H}$, it follows that $\tilde{H}(U_{0})$ is a centered Gaussian process. 

In a lest step, we compute the corresponding covariance function. We start with the covariance of $U_0 = (U_{1,0}, \ldots, U_{{2\kappa},0})$. For $0 \leq s \leq t \leq 1$ and $i,j \in \{1,\ldots,{2\kappa}\}$, 
\begin{align*}
\cov \big( U_{i,0}(s),U_{j,0}(t) \big) 
= \ew \Big[ \ind_{\{\upsilon_{i,1} \leq s\}} \ind_{\{\upsilon_{j,1} \leq t\}} - \tilde{G}(s)\tilde{G}(t) \Big] 
= C\big(k_{i,j}(s,t)\big) - \tilde{G}(s)\tilde{G}(t)\,.
\end{align*}
Therefore, for $-\infty < s \leq t < \infty$ and $i,j \in \{1, \ldots, {\kappa}\}$,
\begin{align*}
\cov \big( (\tilde{H}(U_0))_i[s],(\tilde{H}(U_0))_j[t] \big)
&= \cov \big( U_{i,0}(F_i(s)),U_{j,0}(F_j(t)) \big) \\
&= C \big( k_{i,j}(F_i(s),F_j(t)) \big) - \tilde{G}(F_i(s))\, \tilde{G}(F_j(t)) \\
&= F \big( k_{i,j}(s,t) \big) - F_i(s)\, F_j(t) \,.
\end{align*}
For the cases $i,j \in \{\kappa +1,\ldots, 2\kappa\}$ as well as $i \in \{1,\ldots,\kappa\}$, $j \in \{\kappa +1,\ldots,2\kappa\}$, the argument is completely analogous.
\end{proof}

\begin{proof}[Proof of Lemma \ref{lemma:hadamard_dif}]
First we show that $\psi_{p,q}^{(i)}$ is Hadamard differentiable at $(F_1,\ldots,F_{\kappa},G_1,\ldots,G_\kappa)$ tangentially to $\mathds{D}_0$, with derivative
\begin{align*}
D\psi_{p,q}^{(i)}(F_1,\ldots,F_{\kappa},G_{1}, \ldots, G_\kappa)[h_1,\ldots,h_{2\kappa}] = \Bigg(h_{i},h_{\kappa + i}, -\frac{h_{i}(F_{i}^{-1}(1-p)}{f_{i}(F_{i}^{-1}(1-p)},-\frac{h_{\kappa + i}\big(G_{i}^{-1}(1-q)\big)}{g_{i}\big(G_{i}^{-1}(1-q)\big)}\Bigg)\,.
\end{align*}
Let the sequences $\{t_r\}_{r \in \Nbb} \subset \Rbb$ and $\{(h_{1,r}, \ldots,h_{2\kappa,r})\}_{r \in \Nbb} \subset \mathds{D}_1$ be chosen such that $t_r \to 0$ and $\|(h_{1,r},\ldots,h_{2\kappa,r}) - (h_1,\ldots,h_{2\kappa})\|_{\mathds{D}_1} \to 0$ as $r \to \infty$. Note that
\begin{align*}
&\frac{1}{t_r} \big( \psi_{p,q}^{(i)}(F_1+t_rh_{1,r},\ldots,G_{\kappa}+t_rh_{2\kappa,k}\big) - \psi_{p,q}^{(i)}(F_1,\ldots,G_{\kappa}) \big) \\
&= \Big( h_{i,r} , h_{i+\kappa,r} , \frac{(F_{i}+t_r h_{i,r})^{-1}(1-q)-F_{i}^{-1}(1-q)}{t_r},\frac{(G_{i}+t_r h_{\kappa  + i,r})^{-1}(1-p)-G_{i}^{-1}(1-p)}{t_r} \Big)\,.
\end{align*}
This converges in $\|\cdot\|_{\mathds{D}_2}$ to the prescribed limit (consult Lemma 21.3 in \cite{vandervaart2000} for the Hadamard derivative of the quantile transform).

In a second step, we have to show that $\phi_{p,q}$ is Hadard differentiable at each  $(F_{i},G_{i})$, tangentially to $D\psi_{p,q}^{(i)}(F_1,\ldots,G_{\kappa})[\mathds{D}_0]$, with derivative
\begin{align*}
&D\phi_{p,q}(F_{i},G_{i},a,b)[h_{i},h_{\kappa + i},x,y]  \\
&\hspace{2.2cm}= \big[ y \, g_{i}(b) \big( F_{i}(b) - F_{i}(a) \big) - x \, f_{i}(a) \big( G_{i}(b) - G_{i}(a)\big) \big]
\\
&\hspace{3.5cm} + \int_{(a,b]} \big(h_{\kappa + i}(b) - h_{\kappa + i}(u)\big) \ \dif F_{i}(u) 
  + \int_{(a,b]} \big(h_{i}(u-) - h_{i}(a)\big) \ \dif G_{i}(u) \\
&\hspace{2.2cm}=  \int_{(a,b]} \big(h_{\kappa + i}(b) - h_{\kappa + i}(u) + y \, g_{i}(b) \big) \ \dif F_{i}(u) 
  + \int_{(a,b]} \big(h_{i}(u-) - h_{i}(a) - x \, f_{i}(a) \big) \ \dif G_{i}(u)
\end{align*}
To that end, let $\{(h_{{i},r},h_{{\kappa + i},r},x_r,y_r)\}_{r \in \Nbb}$ be a sequence such that $\|(h_{{i},r},h_{{\kappa + i},r},x_r,y_r) - (h_{i},h_{\kappa + i},x,y)\|_{\mathds{D}_2} \to 0$, as $r \to \infty$, and $\big(F_{i} + t_r h_{i,r}, G_{i} + t_rh_{\kappa + i,r}, a + t_r x_r, b + t_r y_r\big) \in \mathds{D}_2$ for all $r$. Then
\begin{align*}
&\frac{1}{t_r} \big[ \phi_{p,q}(F_{i} + t_rh_{{i},r},G_{i} + t_rh_{{\kappa + i},r}, a + t_r x_r, b + t_r y_r) - \phi_{p,q}(F_{i},G_{i},a,b) \big] \\ 
&\hspace{0.5cm}= \frac{1}{t_r} \big[ \phi_{p,q}(F_{i} + t_r h_{{i},r},G_{i} + t_r h_{{\kappa + i},r}, a + t_r x_r, b + t_r x_r) \\
&\hspace{1.5cm} - \phi_{p,q}(F_{i} + t_r h_{{i},r},G_{i} + t_r h_{{\kappa + i},r},a,b) \big] \\
& \hspace{3cm} + \frac{1}{t_r} \big[\phi_{p,q}(F_{i} + t_r h_{{i},r},G_{i} + t_r h_{{\kappa + i},r},a,b) - \phi_{p,q}(F_{i},G_{i},a,b) \big] \\
& \hspace{.5cm}= D^{(1)}_{i,r} + D^{(2)}_{i,r}\,.
\end{align*}
It can be shown that
\begin{align}
&\Big| D^{(1)}_{i,r}  - \Big[y \, g_{i}(b) \big( F_{i}(b) - F_{i}(a) \big) 
- x \, f_{i}(a) \big( G_{i}(b) - G_{i}(a)\big) \Big] \Big| \nonumber \\
&\hspace{0.5cm} + \Big| D^{(2)}_{i,r} -  \Big[\int_{(a,b]} \big(h_{\kappa + i}(b) - h_{\kappa + i}(u)\big) \ \dif F_{i}(u) 
  +  \int_{(a,b]} \big(h_{i}(u-) - h_{i}(a)\big) \ \dif G_{i}(u) \Big] \Big| \to 0\,,
  \label{eq:D1+D2:2}
\end{align}
as $r \to \infty$. We present a detailed proof of this claim in the supplemental material.

According to the chain rule for Hadamard differentiation \citep[Lemma 3.9.6]{vandervaart1996},
\begin{align*}
&D \big( \phi_{p,q} \circ \psi_{p,q}^{(i)} \big)(F_1,\ldots,G_{\kappa})[h_1, \ldots,h_{2\kappa}] \\
&= D \phi_{p,q}\big(\psi_{p,q}^{(i)}(F_1,\ldots,G_{\kappa})\big)\big[ D\psi_{p,q}^{(i)}(F_1,\ldots,G_{\kappa})[h_1,\ldots,h_{2\kappa}]\big] \\
&= \int_{\big(F_{i}^{-1}(1-p),G_{i}^{-1}(1-q)\big]} h_{i}(u-) \ \dif G_{i}(u) - \int_{\big(F_{i}^{-1}(1-p),G_{i}^{-1}(1-q)\big]} h_{\kappa + i}(u)  \ \dif F_{i}(u) \,.
\end{align*}
This implies the first assertion. 

As to the derivative of $\tau_{1,0}$, note that we do not have to apply the chain rule and that the term $D_{i,k}^{(1)}$ disappears in this case. The analysis of $D^{(2)}_{i,k}$ is very similar to the case of $\tau_{p,q}$. 
\end{proof}

\begin{proof}[Proof of Theorem \ref{thm:convergence2}]
The weak convergence to $w_{p,q}$ follows from the functional delta method \citep[Theorem 3.9.4]{vandervaart1996}. As linear functionals of Gaussian processes, the limits are Gaussian, i.e. multivariate normal \citep[Lemma 3.9.8]{vandervaart1996}. The covariance $\cov\big(\langle w_{p,q}, e_i \rangle\,,\,\langle w_{p,q}, e_j \rangle\big)$ is a sum of four covariances, 
\begin{align*}
&\cov \Bigg( 
\int_{(a_i,b_i]} \lambda^{-1/2} B_{i}(u-) \ \dif G_{i}(u) 
- \int_{(a_i,b_i]} (1-\lambda)^{-1/2} B_{\kappa + i}(u) \  \dif F_{i}(u) \,,\, \\
& \hspace{4cm} \int_{(a_j,b_j]} \lambda^{-1/2} B_{j}(u-) \ \dif G_{j}(u) 
- \int_{(a_j,b_j]} (1-\lambda)^{-1/2} B_{\kappa + j}(u) \  \dif F_{j}(u)
 \Bigg)  \\
&= T_1 + T_2 + T_3 + T_4\,. 
\end{align*}
Using the fact that the brownian bridge has continuous sample paths and that $\ew B_i(u) = 0$ for any $u \in \Rbb$, we can apply Fubini's theorem to conclude that
\begin{align*}
T_1 
&= \int_{(a_i,b_i]} \int_{(a_j,b_j]} \lambda^{-1}
\ew   \big[ B_{i}(u) B_{j}(v-) \big] \ P^{(1)}_{j}(\dif v)  P^{(1)}_{i}(\dif u)  = \lambda^{-1} \iint_D \cov\big(B_{i}(u),B_j(v)\big) \ P^{(1)}_{i} \otimes P^{(1)}_{j}(\dif u,\dif v)\,.
\end{align*}
The other three terms are computed analogously. 

The proof that $\hat{\sigma}_{p,q;n}(i,j) \to \sigma_{p,q}(i,j)$ almost surely as $n \to \infty$ is entirely based on the Glivenko--Cantelli theorem. We provide it in the supplemental material.
\end{proof}

\begin{proof}[Proof of Theorem \ref{thm:boostrap}]
The claim follows upon an application of the fundamental Theorem \ref{Thm:boostrap_fundamental} of the supplemental material \citep[Theorem 23.9]{vandervaart2000}. To verify its conditions, we use our Theorem \ref{thm:convergence1} and Theorem 3.6.1 of \cite{vandervaart1996}. 
\end{proof}

\begin{proof}[Proof of Lemma \ref{lemma_test}]
Let $V$ be the diagonal matrix with $\mathrm{diag}(V)=\mathrm{diag}(C^\top \Sigma_{p,q} C)$. As a consequence of Theorem \ref{thm:boostrap}, Lemma \ref{lemma:boostrap_var}, and Slutsky's lemma, we have
\begin{align*}
\sup_{x \in \Rbb} \big| \pr \big\{\|S_n^*\|_\infty \leq x  \bigm| \xi_1,\ldots,\xi_{\alpha_n},\eta_1,\ldots,\eta_{_{\beta_n}} \big\} - \pr \big\{\|V^{-1/2}C^\top w_{p,q}\|_\infty \leq x \big\} \big| \to 0
\end{align*}
as $n \to \infty$, in probability \citep[p. 862]{gine1990}. Hence, for all $\theta_{p,q}$ such that $\langle c_{i_1}\,,\,\theta_{p,q})\rangle = \ldots = \langle c_{i_m}\,,\,\theta_{p,q})\rangle = 0$,
\begin{align*}
 \pr \Big( \bigcup_{j=1}^m \big\{ \varphi_{i_j, \delta} = 1 \big\} \Big) 
&= \pr \Big\{ \max_{j \in \{i_1,\ldots,i_m\}} \big|T_{j,n}\big| > q_{1,\ldots,r}(S_n^*;1-\delta) \Big\} \\
&\leq \pr \Big\{ \max_{j \in \{i_1,\ldots,i_m\}} \big|T_{j,n}\big| > q_{i_1,\ldots,i_m}(S_n^*;1-\delta) \Big\} \\
&= \pr \Big\{ \max_{j \in \{i_1,\ldots,i_m\}} \big|n^{1/2}\big(\big\langle c_j ,\hat{\theta}_{p,q}[\mathds{P}_n]\big\rangle - \big\langle c_j , \theta_{p,q}[P]\big\rangle \big) \big| > q_{i_1,\ldots,i_m}(S_n^*;1-\delta)\Big\} \to \delta
\end{align*}
as $n \to \infty$, where $q_{i_1,\ldots,i_m}(S_n^*;1-\delta)$ denotes the $(1-\delta)$ the equi-coordinate quantile of the random vector $\big(\langle S_n^*,e_{i_1}\rangle,\ldots,\langle S_n^*,e_{i_m}\rangle\big)^\top$. For the global test, i.e. if $\{i_1,\ldots,i_m\} = \{1,\ldots,r\}$, the inequality in the second line becomes an equality. 
\end{proof}

\section{Supplemental Material}

\subsection*{Details of the proof of Lemma \ref{lemma:hadamard_dif}}

For the Lebesgue--Stieltjes measure of a càdlàg function $F$, we write $\mu_F$.
Without loss of generality we consider the case $i=1$. Let us first estimate the size of $D_{1,k}^{(1)}$. We have
\begin{align*}
t_r \, D_{1,r}^{(1)} 
&= \int_{(a+x t_r,b+y t_r]} \big(G_1 + t_r h_{\kappa+1,r}\big)(b+y t_r) - \big(G_1 + t_r h_{\kappa + 1,r}\big)(u) \ \dif \big(F_1 + t_r h_{1,r}\big)(u) \\
& \hspace{2cm} - \int_{(a,b]} \big(G_1 + t_r h_{\kappa + 1,r}\big)(b) - \big(G_1 + t_r h_{\kappa+1,r}\big)(u) \ \dif \big(F_1 + t_r h_{1,r}\big)(u) \\
&= \int_{(-\infty,b+y t_r]} \big(G_1 + t_r h_{\kappa + 1,r}\big)(b+y t_r) - \big(G_1 + t_r h_{\kappa + 1,r}\big)(u) \ \dif \big(F_1 + t_r h_{1,r}\big)(u) \\
& \hspace{.5cm} - \int_{(-\infty,a+x t_r]} \big(G_1 + t_r h_{\kappa + 1,r}\big)(b+y t_r) - \big(G_1 + t_r h_{\kappa + 1,r}\big)(u) \ \dif \big(F_1 + t_r h_{1,r}\big)(u) \\
& \hspace{1.5cm} - \int_{(-\infty,b]} \big(G_1 + t_r h_{\kappa + 1,r}\big)(b) - \big(G_1 + t_r h_{\kappa + 1,r}\big)(u) \ \dif \big(F_1 + t_r h_{1,r}\big)(u) \\
& \hspace{2cm} + \int_{(-\infty,a]} \big(G_1 + t_r h_{\kappa + 1,r}\big)(b) - \big(G_1 + t_r h_{\kappa + 1,r}\big)(u) \ \dif \big(F_1 + t_r h_{1,r}\big)(u) \\
&= \qquad \int_{(-\infty,b+y t_r]} \big(G_1 + t_r h_{\kappa + 1,r}\big)(b+y t_r) - \big(G_1 + t_r h_{\kappa + 1,r}\big)(b) \ \dif \big(F_1 + t_r h_{1,r}\big)(u) \\
&\qquad      + \int_{(-\infty,b+y t_r]} \big(G_1 + t_r h_{\kappa + 1,r}\big)(b) - \big(G_1 + t_r h_{\kappa 1,r}\big)(u) \ \dif \big(F_1 + t_r h_{1,r}\big)(u) \\
&\qquad - \int_{(-\infty,b]} \big(G_1 + t_r h_{\kappa + 1,r}\big)(b) - \big(G_1 + t_r h_{\kappa + 1,r}\big)(u) \ \dif \big(F_1 + t_r h_{1,r}\big)(u) \\
&\qquad - \int_{(-\infty,a+x t_r]} \big(G_1 + t_r h_{\kappa + 1,r}\big)(b+y t_r) - \big(G_1 + t_r h_{\kappa + 1,r}\big)(b) \ \dif \big(F_1 + t_r h_{1,r}\big)(u) \\
&\qquad - \int_{(-\infty,a+x t_r]} \big(G_1 + t_r h_{\kappa + 1,r}\big)(b) - \big(G_1 + t_r h_{\kappa + 1,r}\big)(u) \ \dif \big(F_1 + t_r h_{1,r}\big)(u) \\
&\qquad + \int_{(-\infty,a]} \big(G_1 + t_r h_{\kappa + 1,r}\big)(b) - \big(G_1 + t_r h_{\kappa + 1,r}\big)(u) \ \dif \big(F_1 + t_r h_{1,r}\big)(u) \,.
\end{align*}
As $r \to \infty$, we see that
\begin{align*}
& \frac{1}{y t_r} \int_{(-\infty,b+y t_r]} \big(G_1 + t_r h_{\kappa + 1,r}\big)(b+y t_r) - \big(G_1 + t_r h_{\kappa + 1,r}\big)(b) \ \dif \big(F_1 + t_r h_{1,r}\big)(u) \\
& = \Big[\frac{G_1(b + y t_r) - G_1(b)}{y t_r} + \frac{h_{\kappa + 1,r}(b + y t_r) - h_{\kappa + 1,r}(b)}{y}\Big] \Big[ F_1(b + t_r y) + t_r \mu_{h_{1,r}}\big( (-\infty,b+t_r y]\big) \Big] \\
&= \big(g_1(b) + o(1) \big) \, \big( F_1(b) + o(1) + O(t_r) \big) \,,
\end{align*}
and analogously
\begin{align*}
&\frac{1}{y t_r} \int_{(-\infty,a+x t_r]} \big(G_1 + t_r h_{\kappa + 1,r}\big)(b+y t_r) - \big(G_1 + t_r h_{\kappa + 1,r}\big)(b) \ \dif \big(F_1 + t_r h_{1,r}\big)(u) \\
&= \big(g_1(b) + o(1) \big) \, \big(F_1(a) + o(1) + O(t_r) \big) \,.
\end{align*}
If $r$ is large enough, both $F_1$ and $G_1$ are continuously differentiable in $(b-|y t_r|,b+|y t_r|)$. Therefore, 
\begin{align*}
&\frac{1}{y\,t_r} \int_{(-\infty,b+y t_r]} \big( G_1(b)-G_1(u) \big) \ \dif F_1(u) 
- \frac{1}{y\,t_r} \int_{(-\infty,b]} \big( G_1(b)-G_1(u) \big) \ \dif F_1(u) \\
&=   \frac{1}{|y\,t_r|} \int_b^{b+y t_r} \big( G_1(b)-G_1(u) \big) \, f_1(u) \ \dif u \\
&= \big( G_1(b)-G_1(z_r) \big) \, f_1(z_r)\,,
\end{align*}
for some $z_r \in \Rbb$ with $|z_r - b| \leq |y \, t_r|$. Hence,
\begin{align*}
&\frac{1}{y\,t_r} \Big| \int_{(-\infty,b+y t_r]} \Big[ \big(G_1 + t_r h_{\kappa + 1,r}\big)(b) - \big(G_1 + t_r h_{\kappa + 1,r}\big)(u)\Big] \ \dif \big(F_1 + t_r h_{1,r}\big)(u) \\
&\hspace{2cm} - \int_{(-\infty,b]} \big(G_1 + t_r h_{\kappa + 1,r}\big)(b) - \big(G_1 + t_r h_{\kappa + 1,r}\big)(u) \ \dif \big(F_1 + t_r h_{1,r}\big)(u) \Big| \\
&\leq \frac{1}{y\,t_r} \Big| \int_{(-\infty,b+t_r]} \big( G_1(b)-G_1(u) \big) \ \dif F_1(u) 
- \frac{1}{y\,t_r} \int_{(-\infty,b]} \big( G_1(b)-G_1(u) \big) \ \dif F_1(u) \Big|\\
& \hspace{1.5cm} + \frac{2\,\sup_r \|h_{\kappa + 1,r}\|_\infty}{y}   \mu_{F_1}\big( (b,b+y t_r] \cup (b+y t_r,b] \big) 
 + \frac{2 + o(1)}{y}   \mu_{h_{1,r}}\big( (b,b+y t_r] \cup (b+y t_r,b] \big)\\
&= |G_1(b) - G_1(z_k)| \, |f_1(z_k)| + o(1)\\
&= o(1)
\end{align*}
for $k \to \infty$. Analogously we infer that
\begin{align*}
&\frac{1}{x\,t_r} \Big[ \int_{(-\infty,a+x t_r]} \Big[ \big(G_1 + t_r h_{\kappa + 1,r}\big)(b) - \big(G_1 + t_r h_{\kappa + 1,r}\big)(u)\Big] \ \dif \big(F_1 + t_r h_{1,r}\big)(u) \\
&\hspace{3cm} - \int_{(-\infty,a]} \big(G_1 + t_r h_{\kappa + 1,r}\big)(b) - \big(G_1 + t_r h_{\kappa + 1,r}\big)(u) \ \dif \big(F_1 + t_r h_{1,r}\big)(u) \Big] \\
&= (G_1(b) - G_1(a)) \, f_1(a) + o(1)  \,,
\end{align*}
as $r \to \infty$. In summary, we have shown so far that
\begin{align*}
\lim_{r \to \infty} \Big|D^{(1)}_{1,r} - \big[ y \, g_1(b) \big( F_1(b) - F_1(a) \big) - x \, f_1(a) \big( G_1(b) - G_1(a)\big) \big] \Big| = 0\,.
\end{align*}
As for $D^{(2)}_{1,k}$, we see that
\begin{align*}
D^{(2)}_{1,r} 
&= \frac{1}{t_r} \Big[ \int_{(a,b]} \big[ \big(G_1 + t_r h_{\kappa + 1,r}\big)(b) - \big(G_1 + t_r	h_{\kappa + 1,r}\big)(u) \big] \ \dif \big(F_1 + t_r h_{1,r}\big)(u) \\
&\hspace{4cm} - \int_{(a,b]} \big( G_1(b) - G_1(u) \big) \ \dif F_1(u) \Big] \\
&= \int_{(a,b]} \big(h_{\kappa + 1,r}(b) - h_{\kappa + 1,r}(u)\big) \ \dif F_1(u) \\
&\hspace{2cm} + \int_{(a,b]} \big( G_1(b) - G_1(u) \big) \ \dif h_{1,r}(u) \\
&\hspace{4cm} + t_r \int_{(a,b]} \big(h_{\kappa + 1,r}(b) - h_{\kappa + 1,r}(u) \big) \ \dif h_{1,r}(u) \\
&= \int_{(a,b]} \big(h_{\kappa + 1,r}(b) - h_{\kappa + 1,r}(u)\big) \ \dif F_1(u) \\
& \hspace{2cm} + \int_{(a,b]} \big(h_{1,r}(u-) - h_{1,r}(a)\big) \ \dif G_1(u) \\
& \hspace{4cm} + O(t_r) \\
&= \int_{(a,b]} \big(h_{\kappa + 1}(b) - h_{\kappa + 1}(u)\big) \ \dif F_1(u)  + \int_{(a,b]} \big(h_{1}(u-) - h_{1}(a)\big) \ \dif G_1(u) 
 + o(1)\,,
\end{align*}
since $\|h_{\kappa + 1,r}-h_{\kappa + 1}\|_\infty + \|h_{1,r} - h_1\|_\infty \to 0$, as $r \to \infty$.
\qed

\subsection*{Proof of Corollary 1}
Let $\mathds{P}^{(0)}_{i,n} = \alpha_n^{-1} \sum_{r=1}^{\alpha_n} \delta_{\xi_r}$ and $\mathds{P}^{(1)}_{i,n} = \beta_n^{-1} \sum_{s=1}^{\beta_n} \delta_{\eta_s}$.
Recall that
\begin{align}\label{eq:eq}
\hat{\sigma}_{p,q;n}(i,j)&= \frac{\alpha_n + \beta_n}{\alpha_n}\int_{(\hat{a}_{i,n},\hat{b}_{i,n}]} \int_{(\hat{a}_{j,n},\hat{b}_{j,n}]} \big( \mathds{F}_n\big( k_{i,j}(u,v)\big) - \mathds{F}_{i,n}(u)\mathds{F}_{j,n}(v) \big) \ \dif \mathds{G}_{j,n}(v)  \dif \mathds{G}_{i,n}(u) \nonumber \\ 
&\qquad+ \frac{\alpha_n + \beta_n}{\beta_n}\int_{(\hat{a}_{i,n},\hat{b}_{i,n}]} \int_{(\hat{a}_{j,n},\hat{b}_{j,n}]}  \big( \mathds{G}_n\big( k_{i,j}(u,v)\big) - \mathds{G}_{i,n}(u)\mathds{G}_{j}(v) \big) \ \dif \mathds{F}_{j,n}(v)  \dif \mathds{F}_{i,n}(u) \,.
\end{align}
We consider both addends separately. As for the first term, observe that
\begin{align*}
&\Big|\frac{\alpha_n + \beta_n}{\alpha_n}\int_{(\hat{a}_{i,n},\hat{b}_{i,n}]} \int_{(\hat{a}_{j,n},\hat{b}_{j,n}]} \big( \mathds{F}_n\big( k_{i,j}(u,v)\big) - \mathds{F}_{i,n}(u)\mathds{F}_{j,n}(v) \big) \ \dif \mathds{G}_{j,n}(v)  \dif \mathds{G}_{i,n}(u) \\
& \hspace{5cm }- \frac{1}{\lambda} \iint_D \big(F(k_{i,j}(u,v)) - F_i(u)F_j(v)\big) P_i^{(1)} \otimes P_j^{(1)} (\dif u, \dif v) \Big| \\ 
&\leq \quad \Big| \int_{(a_i,b_i] \times (a_j,b_j]} \big( F(k_{i,j})(u,v) - F_{i}(u)F_{j}(v) \big) \ P^{(1)}_{j}\otimes P^{(1)}_{i}(\dif u, \dif v) \\
& \hspace{1cm} - \int_{(a_i,b_i] \times (a_j,b_j]} \big( \mathds{F}_n(k_{i,j})(u,v) - \mathds{F}_{i,n}(u)\mathds{F}_{j,n}(v) \big) \ \mathds{P}^{(1)}_{j,n}\otimes \mathds{P}^{(1)}_{i,n}(\dif u, \dif v)  \Big| \\
& \quad + \Big| \int_{(a_i,b_i] \times (a_j,b_j]} \big( \mathds{F}_n(k_{i,j})(u,v) - \mathds{F}_{i,n}(u)\mathds{F}_{j,n}(v) \big) \ \mathds{P}^{(1)}_{j,n}\otimes \mathds{P}^{(1)}_{i,n}(\dif u, \dif v) \\
& \hspace{1cm} - \int_{(\hat{a}_{i,n},\hat{b}_{i,n}] \times (\hat{a}_{i,n},\hat{b}_{i,n}]} \big( \mathds{F}_n(k_{i,j})(u,v) - \mathds{F}_{i,n}(u)\mathds{F}_{j,n}(v) \big) \ \mathds{P}^{(1)}_{j,n}\otimes \mathds{P}^{(1)}_{i,n}(\dif u, \dif v)  \Big| \\
& \quad + o(1) \\
&\leq \quad \Big(\big\|\mathds{F}_n - F\big\|_\infty + \big\|\mathds{F}_{i,n}\mathds{F}_{j,n}- F_{i}F_{j}\big\|_\infty \big) \\
& \quad + \Big| \int \ind_{(a_i,b_i]\times (a_j,b_j]}(u,v) \ \big(F(k_{i,j})(u,v)-F_i(u)F_j(v)\big) \ \big( P_i^{(1)} \otimes P_j^{(1)} - \mathds{P}_{i,n}^{(1)} \otimes \mathds{P}_{j,n}^{(1)} \big)(\dif u, \dif v)  \Big|  \\
& \quad + \big(\mathds{P}_{i,n}^{(1)} \otimes \mathds{P}_{j,n}^{(1)}\big)\big( \big\{ (a_i,b_i]\times (a_j,b_j] \big\} \triangle \big\{ (\hat{a}_{i,n},\hat{b}_{i,n}] \times (\hat{a}_{j,n},\hat{b}_{j,n}] \big\} \big) \\
& \quad + o(1)\,.
\end{align*}
The first two addends converge to zero almost surely as a consequence of the Glivenko--Cantelli theorem. The third term is $o_{\pr}(1)$, and we conclude that
\begin{align*}
    &\Big|\frac{\alpha_n + \beta_n}{\alpha_n}\int_{(\hat{a}_{i,n},\hat{b}_{i,n}]} \int_{(\hat{a}_{j,n},\hat{b}_{j,n}]} \big( \mathds{F}_n\big( k_{i,j}(u,v)\big) - \mathds{F}_{i,n}(u)\mathds{F}_{j,n}(v) \big) \ \dif \mathds{G}_{j,n}(v)  \dif \mathds{G}_{i,n}(u) \\
& \hspace{5cm }- \frac{1}{\lambda} \iint_D \big(F(k_{i,j}(u,v)) - F_i(u)F_j(v)\big) P_i^{(1)} \otimes P_j^{(1)} (\dif u, \dif v) \Big| \to 0
\end{align*}
as $n \to \infty$, in probability. The other addend in line \eqref{eq:eq} is treated analogously. \qed

\subsection*{Weak convergence of empirical processes}

\begin{definition}
For $n \in \Nbb$, let $X_n \colon \Omega \to (\ell_\infty^\kappa,d^\kappa_\infty)$ be $(\cf-\cp^\kappa)$-measurable, and let $X_0$ be $(\cf-\cb(\ell_\infty^\kappa))$-measurable. The sequence $\{X_n\}_{n \in \Nbb_+}$ is said to converge weakly to $X_0$, denoted by $X_n \rightsquigarrow  X_0$, if for every bounded continuous functional $f \colon (\ell_\infty^\kappa,d^\kappa_\infty) \to \Rbb$
\begin{align*}
\int^*f(X_n) \dif \pr \to \int f(X_0) \dif \pr \hspace{1cm } (n \to \infty)\,.
\end{align*}
Here, $\int^*f(X_n) \dif \pr = \inf\big\{ \int U \dif \pr \colon f(X_n) \leq U, \, U \colon (\Omega,\cf) \to (\overline{\Rbb},\cb(\overline{\Rbb})\big\}$. 
\end{definition}

As in the classical theory, this form of weak convergence is characterized as convergence of the finite dimensional distributions plus tightness.

\begin{definition}
The random element $X_0$ is tight if for every $\varepsilon > 0$ there exists a compact $K \subset (\ell_\infty^\kappa,d^\kappa_\infty)$ such that $\pr\{X_0 \in K\} > 1- \varepsilon$.

The sequence $\{X_n\}$ is asymptotically tight if for any $\varepsilon > 0$ there is a compact $K \subset (\ell_\infty^\kappa,d^\kappa_\infty)$ such that
$\inf_{\delta > 0} \ \liminf_{n \to \infty} \ \pr^*\big\{d^\kappa_\infty(X_n,K) < \delta\big\} > 1-\varepsilon$, where $\pr^*(B) := \inf \big\{\pr(A) \colon B \subset A, \ A \in \cf \big\}$ for $B \subset (\ell_\infty^\kappa,d_\infty)$. 

The sequence $\{X_n\}$ is asymptotically Borel measurable if
\begin{align*}
\int^* f(X_n) \dif \pr - \int_* f(X_n) \dif \pr \to 0 \hspace{1cm} (n \to \infty)\,,
\end{align*}
for every bounded continuous functional $f$ on $(\ell_\infty^\kappa,d_\infty^\kappa)$. Here, $\int_* f(X_n) \dif \pr := \sup\big\{ \int L \dif \pr \colon L \leq f, \ L \colon (\Omega,\cf) \to (\Rbb,\cb) \big\}$.
\end{definition}  

\begin{lemma}\label{lemma:1}
Let $\{P_n\}_{n \in \Nbb_+}$ be a sequence of probability measures on $(\ell_\infty^\kappa,\cp^\kappa)$ and $P_0$ a Borel probability measure on $(\ell_\infty^\kappa,\cb(\ell_\infty^\kappa))$, respectively. Furthermore, let $\{Y_n\}_{n \in \Nbb}$ be a sequence of random elements $Y_n \colon (\Omega,\cf) \to (\ell_\infty^\kappa,\cp)$ and $Y_0 \colon (\Omega,\cf) \to (\ell_\infty^\kappa,\cb(\ell_\infty^\kappa))$. Suppose that $P_n = \pr \circ Y_n^{-1}$ for all $n \in \Nbb$. If $\lim_{n \to \infty} \int^* f \dif P_n = \int f \dif P_0$ for every bounded continuous functional $f$ on $(\ell_\infty^\kappa,d_\infty^\kappa)$, then $Y_n \rightsquigarrow Y_0$.
\end{lemma}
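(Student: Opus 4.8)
The plan is to deduce the conclusion from a single one-sided change-of-variables inequality for outer integrals, applied to both $f$ and $-f$. By the definition of weak convergence of random elements it is enough to fix a bounded continuous $f\colon(\ell_\infty^\kappa,d^\kappa_\infty)\to\Rbb$ and prove that $\int^* f(Y_n)\dif\pr\to\int f(Y_0)\dif\pr$. Since $Y_0$ is $(\cf-\cb(\ell_\infty^\kappa))$-measurable and $f$, being continuous, is Borel measurable, the composition $f(Y_0)$ is $\cf$-measurable; reading $P_0$ as the law of $Y_0$ (which is what the notation $Y_n\rightsquigarrow Y_0$ presupposes, i.e.\ $P_0=\pr\circ Y_0^{-1}$), the usual image-measure formula gives $\int f(Y_0)\dif\pr=\int f\dif P_0$. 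So the hypothesis is exactly $\int^* f\dif P_n\to\int f(Y_0)\dif\pr$, and---applying the hypothesis to the bounded continuous functional $-f$ and using $\int^*(-h)=-\int_*h$---we also obtain $\int_* f\dif P_n\to\int f(Y_0)\dif\pr$.

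The key step is the inequality
\[
\int^* g(Y_n)\dif\pr\ \le\ \int^* g\dif P_n ,
\]
valid for every bounded $g\colon\ell_\infty^\kappa\to\Rbb$ and every $n$. To prove it, take any $\cp^\kappa$-measurable $V\colon\ell_\infty^\kappa\to\Rbb$ with $V\ge g$ (we may assume $V$ bounded, replacing it by $V\wedge\sup g$). Then $V\circ Y_n$ is $\cf$-measurable, being a composition of measurable maps; it dominates $g(Y_n)$; and $\int(V\circ Y_n)\dif\pr=\int V\dif P_n$ because $P_n=\pr\circ Y_n^{-1}$. Hence $\int^* g(Y_n)\dif\pr\le\int V\dif P_n$, and taking the infimum over all admissible $V$ yields the claim. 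The reverse inequality can fail---the cylinder $\sigma$-field $\cp^\kappa$ may be strictly coarser than $\cf$---which is precisely why the two-sided limit has to be reached by symmetrising in $f$.

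To finish, apply the inequality with $g=f$ and with $g=-f$ (using $\int^*(-h)=-\int_*h$ on both sides) to obtain, for every $n$,
\[
\int_* f\dif P_n\ \le\ \int_* f(Y_n)\dif\pr\ \le\ \int^* f(Y_n)\dif\pr\ \le\ \int^* f\dif P_n .
\]
By the first paragraph both outer quantities converge to $\int f(Y_0)\dif\pr$, so the sandwich forces $\int^* f(Y_n)\dif\pr\to\int f(Y_0)\dif\pr$; since $f$ was an arbitrary bounded continuous functional, $Y_n\rightsquigarrow Y_0$. The one genuinely delicate point is the asymmetry between outer and inner integration: one must notice that the naive identity $\int^* f(Y_n)\dif\pr=\int^* f\dif P_n$ need not hold when $Y_n$ is merely cylinder-measurable, and instead extract the missing lower bound from the $(-f)$-version of the inequality. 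The remaining manipulations are the standard outer-expectation calculus of \citet[Chapter~1]{vandervaart1996}.
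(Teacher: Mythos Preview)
Your proof is correct and follows essentially the same route as the paper: both establish the sandwich $\int_* f\,\dif P_n \le \int_* f(Y_n)\,\dif\pr \le \int^* f(Y_n)\,\dif\pr \le \int^* f\,\dif P_n$ via a change-of-variables inequality for outer integrals, then squeeze. The only cosmetic difference is that the paper derives both ends of the chain directly and invokes \citet[Lemma~1.3.8]{vandervaart1996} to get $\int^* f\,\dif P_n - \int_* f\,\dif P_n \to 0$, whereas you prove one inequality and recover the other by symmetrising in $f\mapsto -f$; these are equivalent manoeuvres.
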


\begin{proof}
First note that $\int^* f \dif P_n \to \int f \dif P_0$ implies
$\int^* f \dif P_n - \int_* f \dif P_n \to 0$ \citep[Lemma 1.3.8]{vandervaart1996}. The following chain of inequalities holds for any $n \in \Nbb_+$ and all bounded continuous functional $f$ on $\ell_\infty^k$:
\begin{align*}
\int^* f \dif P_n 
&= \inf \big\{ \int T \dif P_n \colon T \geq f\,, \ T \colon (\ell_\infty^\kappa,\cp^\kappa) \to (\Rbb,\cb)\big\}  \\
&\geq  \inf \big\{ \int T \circ Y_n \dif \pr \colon T\circ Y_n \geq f \circ Y_n\,, \ T \colon (\ell_\infty^\kappa,\cp^\kappa) \to (\Rbb,\cb)\big\}  \\
&\geq \inf \big\{ \int S \dif \pr \colon S \geq f \circ Y_n\,, \ S \colon (\Omega,\cf) \to (\Rbb,\cb)\big\}  \\
&\geq \sup \big\{ \int S \dif \pr \colon S \leq f \circ Y_n\,, \ S \colon (\Omega,\cf) \to (\Rbb,\cb)\big\}  \\
&\geq \sup \big\{ \int T \circ Y_n \dif \pr \colon T \circ Y_n \leq f \circ Y_n\,, \ T \colon (\ell_\infty^\kappa,\cp^\kappa) \to (\Rbb,\cb)\big\}  \\
&\geq \sup \big\{ \int T \dif P_n \colon T \circ Y_n \leq f \circ Y_n\,, \ T \colon (\ell_\infty^\kappa,\cp^\kappa) \to (\Rbb,\cb)\big\}  \\
&= \int_* f \dif P_n\,.
\end{align*} 
Therefore, $\int^* f \dif P_n \leq \int^* f(Y_n) \dif \pr \leq  \int_* f \dif P_n$, and $\lim_{n \to \infty} \int^* f(Y_n) \dif \pr$ exists and equals $\lim_{n \to \infty} \int^* f \dif P_n $. Hence, $Y_n \rightsquigarrow Y_0$.
\end{proof}

\begin{theorem}[\citet{vandervaart1996}, p. 35]\label{thm:k=1}
Let $\kappa=1$. If the sequence $\{X_n\}_{n \in \Nbb_+}$ is asymptotically tight and for any $t_1,\ldots,t_r \in \Rbb$ and $r \in \Nbb$
\begin{align*}
\big(\pi_{t_1}\circ X_n, \ldots, \pi_{t_r} \circ X_n \big) \rightsquigarrow \big(\pi_{t_1}\circ X_0, \ldots, \pi_{t_r} \circ X_0 \big)
\end{align*}
in $\Rbb^r$, then $X_n \rightsquigarrow X_0$ in $(\ell_\infty,d_\infty)$ and the limit $X_0$ is a Borel measurable tight random element. 
\end{theorem}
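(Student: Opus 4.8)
The plan is to reduce the family-wise error statement to a Slutsky-type limit argument with three ingredients: (i) under the stated intersection null, the vector of studentized statistics restricted to the ``active'' coordinates converges weakly to a nondegenerate Gaussian vector; (ii) the data-dependent bootstrap critical value converges in probability to a deterministic equicoordinate quantile of a (possibly larger) Gaussian vector; and (iii) equicoordinate quantiles are monotone in the index set over which they are formed. Combining these gives the bound $\le\delta$, with equality when all $r$ coordinates are active, which is exactly the global-test case.

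I would first treat the bootstrap side. Lemma~\ref{thm:boostrap} gives $(\alpha_n+\beta_n)^{1/2}(\hat\theta^*_{p,q;n}-\hat\theta_{p,q;n})\rightsquigarrow w_{p,q}$ conditionally on the data, in outer probability, and Lemma~\ref{lemma:boostrap_var} gives the conditional consistency of $\hat\Sigma^*_{p,q;n}$, hence of the diagonal normalizer $V^*_{p,q;n}$ towards $V:=\mathrm{diag}(C^\top\Sigma_{p,q}C)$. A conditional Slutsky lemma then yields $S_n^*\rightsquigarrow V^{-1/2}C^\top w_{p,q}$ conditionally, in probability. Because the normalization forces each coordinate of the limit to have unit variance, the Gaussian vector $V^{-1/2}C^\top w_{p,q}$ is nondegenerate in each coordinate (positivity of the normalizing variances $\langle c_i,\Sigma_{p,q}c_i\rangle$ following from the assumptions on $f_i,g_i$ at $a_i,b_i$ and, in the total-AUC case, from $F(x),G(x)<1$), so the distribution function of $\|V^{-1/2}C^\top w_{p,q}\|_\infty$ — and that of every sub-vector sup-norm — is continuous and strictly increasing at its $(1-\delta)$-quantile. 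By a P\'olya-type argument this upgrades the conditional weak convergence to $\sup_{x}\big|\pr\{\|S_n^*\|_\infty\le x\mid\text{data}\}-\pr\{\|V^{-1/2}C^\top w_{p,q}\|_\infty\le x\}\big|\to 0$ in probability (cf.\ the cited result of Gin\'e (1990)), and likewise for every sub-vector $(\langle S_n^*,e_{i_1}\rangle,\ldots,\langle S_n^*,e_{i_m}\rangle)$. Consequently $q(S_n^*;1-\delta)$ converges in probability to the $(1-\delta)$ equicoordinate quantile $q_{1,\ldots,r}$ of $V^{-1/2}C^\top w_{p,q}$, and the sub-vector quantile $q_{i_1,\ldots,i_m}(S_n^*;1-\delta)$ converges to the corresponding equicoordinate quantile $q_{i_1,\ldots,i_m}$ of the Gaussian sub-vector.

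Next I would handle the statistic under the intersection null $H_0^{i_1}\wedge\ldots\wedge H_0^{i_m}$. For each $j\in\{i_1,\ldots,i_m\}$ we have $\langle c_j,\theta_{p,q}\rangle=0$, hence $\langle T_n,e_j\rangle=(\alpha_n+\beta_n)^{1/2}\big(V_{p,q;n}^{-1/2}C^\top(\hat\theta_{p,q;n}-\theta_{p,q})\big)_j$; by Theorem~\ref{thm:convergence2}, the consistency of $\hat\Sigma_{p,q;n}$ for $\Sigma_{p,q}$, and Slutsky, the vector $(\langle T_n,e_{i_1}\rangle,\ldots,\langle T_n,e_{i_m}\rangle)$ converges weakly to $W':=(V^{-1/2}C^\top w_{p,q})_{i_1,\ldots,i_m}$. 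Since enlarging the index set can only enlarge the sup-norm, and hence the equicoordinate quantile, we have $q(S_n^*;1-\delta)=q_{1,\ldots,r}(S_n^*;1-\delta)\ge q_{i_1,\ldots,i_m}(S_n^*;1-\delta)$, so
\begin{align*}
\pr\Big(\bigcup_{j=1}^m\{\varphi_{i_j,\delta}=1\}\Big)
&=\pr\Big\{\max_{j\in\{i_1,\ldots,i_m\}}|\langle T_n,e_j\rangle|>q(S_n^*;1-\delta)\Big\}\\
&\le\pr\Big\{\max_{j\in\{i_1,\ldots,i_m\}}|\langle T_n,e_j\rangle|>q_{i_1,\ldots,i_m}(S_n^*;1-\delta)\Big\}.
\end{align*}
The threshold on the right converges in probability to the constant $q_{i_1,\ldots,i_m}$ and the statistic converges weakly to $W'$, so a joint-convergence/Slutsky step lets us pass to the limit, and continuity of the distribution function of $\|W'\|_\infty$ at $q_{i_1,\ldots,i_m}$ gives the limiting value $\pr\{\|W'\|_\infty>q_{i_1,\ldots,i_m}\}=\delta$. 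This proves strong control of the family-wise error rate. For the global hypothesis $H_0\colon C^\top\theta_{p,q}=0$ all $r$ coordinates are active, the inequality above becomes an equality, and the same computation yields a limiting rejection probability of exactly $\delta$; hence $\varphi_\delta$ is asymptotically of size $\delta$.

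The main obstacle is the passage from the conditional (bootstrap) weak convergence, which holds only in outer probability, to convergence of the data-dependent critical value $q(S_n^*;1-\delta)$ to a deterministic limit, and then to the legitimacy of replacing that random threshold by its limit inside $\pr\{\cdot>\cdot\}$. This requires a conditional Slutsky/continuous-mapping lemma to absorb the random normalizer $V^*_{p,q;n}$ into $S_n^*$; the P\'olya-type upgrade of pointwise-in-$x$ convergence of the conditional distribution function to uniform convergence, which rests on continuity and strict monotonicity of the Gaussian sup-norm distribution at its quantile, hence on nondegeneracy of $V^{-1/2}C^\top w_{p,q}$; and a final Slutsky argument combining the weakly convergent statistic with the in-probability-convergent threshold, valid because the limiting distribution function is continuous at the limiting threshold. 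All ingredients are already available from Theorem~\ref{thm:convergence1}, Theorem~\ref{thm:convergence2}, the corollary on consistency of $\hat\Sigma_{p,q;n}$, and Lemmas~\ref{thm:boostrap} and~\ref{lemma:boostrap_var}; the work lies in assembling them in the right order.
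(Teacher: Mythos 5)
Your proposal does not prove the stated theorem. The statement in question is Theorem~\ref{thm:k=1}, the foundational empirical-process result that for $\kappa=1$ asymptotic tightness together with weak convergence of all finite-dimensional projections $\big(\pi_{t_1}\circ X_n,\ldots,\pi_{t_r}\circ X_n\big)$ implies $X_n\rightsquigarrow X_0$ in $(\ell_\infty,d_\infty)$ with a tight Borel measurable limit. What you have written is instead a (reasonable-looking) proof sketch of Theorem~\ref{lemma_test}, the family-wise error rate control result for the multiple contrast test: everything in your argument --- the bootstrap critical value $q(S_n^*;1-\delta)$, the conditional Slutsky step, the monotonicity of equicoordinate quantiles in the index set, the P\'olya-type uniformization --- pertains to that theorem and has no bearing on the convergence-in-$\ell_\infty$ statement you were asked to prove. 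The two results live at entirely different levels: Theorem~\ref{thm:k=1} is one of the probabilistic ingredients that the paper \emph{uses} (via Theorem~\ref{thm:convergence1}) on the way to Theorem~\ref{lemma_test}, not a consequence of it.

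For the record, the paper does not prove Theorem~\ref{thm:k=1} at all; it is quoted from van der Vaart and Wellner (p.~35) and stated in the supplement purely for reference. The standard argument runs as follows: asymptotic tightness (together with asymptotic measurability, which follows from the finite-dimensional convergence) and Prohorov's theorem for nonmeasurable maps give that every subsequence of $\{X_n\}$ has a further subsequence converging weakly to some tight Borel measurable limit; since tight Borel probability measures on $\ell_\infty(\Tbb)$ are uniquely determined by their finite-dimensional marginals, the assumed fidi convergence forces all such subsequential limits to coincide with the law of $X_0$; hence the full sequence converges to $X_0$. If you intend to supply a proof of Theorem~\ref{thm:k=1}, that is the argument to write out; your current text should be redirected to Theorem~\ref{lemma_test}, where it belongs.
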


\subsection*{Empirical bootstrap fundamental theorem}

\begin{theorem}[\citet{vandervaart1996}, Thm. 23.9]\label{Thm:boostrap_fundamental}
Let $\mathds{D}$ be a normed space and denote by $\mathrm{BL}_1(\mathds{D})$ the set of bounded Lipschitz functions on $\mathds{D} \to [-1,1]$ with Lipschitz constant at most $1$. Suppose that the map $\phi \colon \mathds{D}_\phi \subset \mathds{D} \to \Rbb^\kappa$ is Hadamard differentiable at $\theta$ tangentially to a subspace $\mathds{D}_0$. Let $\hat{\theta}_n$ and $\hat{\theta}_n^*$ be maps with values in $\mathds{D}_\phi$ such that $n^{1/2}(\hat{\theta}_n - \theta) \rightsquigarrow T$, with a tight
$\mathds{D}_0$-valued random element $T$. Suppose that 
\begin{align*}
\sup_{h \in \mathrm{BL}_1(\mathds{D})} \Big| \ew \big[ h\big( n^{1/2}(\hat{\theta}_n^* - \hat{\theta}_n) \big) \bigm| \xi_1,\ldots,\xi_n \big] - \ew h(T) \Big| \to 0
\end{align*}
in outer probability and that $n^{1/2}(\hat{\theta}_n^* - \hat{\theta}_n)$ is asymptotically measurable. Then
\begin{align*}
\sup_{h \in \mathrm{BL}_1(\Rbb^\kappa)} \Big| \ew \big[ h\big( n^{1/2}\big(\phi(\hat{\theta}_n^*) - \phi(\hat{\theta}_n)\big) \big) \bigm| \xi_1,\ldots,\xi_n \big] - \ew h\big(D\phi(\theta)[T]\big) \Big| \to 0
\end{align*}
in outer probability.
\end{theorem}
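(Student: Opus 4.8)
This is the functional delta method for the bootstrap, and the plan is to reduce it, conditionally on the data, to the ordinary extended continuous mapping theorem applied to the difference-quotient maps of $\phi$; I would essentially follow van der Vaart and Wellner \cite[\S 3.9]{vandervaart1996}. Write $r_n=n^{1/2}$, $W_n=r_n(\hat{\theta}_n-\theta)$ and $W_n^{*}=r_n(\hat{\theta}_n^{*}-\hat{\theta}_n)$, both $\mathds{D}$-valued, and for each $n$ define the difference-quotient map $g_n$ on $\mathds{D}_n:=\{h\in\mathds{D}\colon \theta+r_n^{-1}h\in\mathds{D}_\phi\}$ by $g_n(h)=r_n\{\phi(\theta+r_n^{-1}h)-\phi(\theta)\}$. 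Then the identity $r_n\{\phi(\hat{\theta}_n^{*})-\phi(\hat{\theta}_n)\}=g_n(W_n+W_n^{*})-g_n(W_n)$ holds, and Hadamard differentiability of $\phi$ at $\theta$ tangentially to $\mathds{D}_0$ is exactly the statement that $g_n$ converges continuously to $D\phi(\theta)$ along $\mathds{D}_0$: $g_n(h_n)\to D\phi(\theta)[h]$ whenever $h_n\in\mathds{D}_n$, $h_n\to h$ and $h\in\mathds{D}_0$. The non-starred term is then immediate: $g_n(W_n)=r_n\{\phi(\hat{\theta}_n)-\phi(\theta)\}\rightsquigarrow D\phi(\theta)[T]$ by the ordinary functional delta method \cite[Theorem 3.9.4]{vandervaart1996} (the tool already used in the proof of Theorem~\ref{thm:convergence2}), so in particular $W_n\rightsquigarrow T$ with $T$ tight and $\mathds{D}_0$-valued and $g_n(W_n)$ asymptotically tight and asymptotically measurable.

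The core is the conditional step. Conditionally on $\xi_1,\ldots,\xi_n$ the element $W_n$ is deterministic, so I would introduce the data-dependent maps $G_n(\cdot):=g_n(\cdot+W_n)-g_n(W_n)$, for which $r_n\{\phi(\hat{\theta}_n^{*})-\phi(\hat{\theta}_n)\}=G_n(W_n^{*})$, and check that $G_n$ still converges continuously to $D\phi(\theta)$ along $\mathds{D}_0$, along almost every data path. In an almost-sure representation of $W_n\rightsquigarrow T$ one has $W_n\to w$ with $w\in\mathds{D}_0$ (as $T$ is tight and $\mathds{D}_0$-valued), so for $h_n\to h\in\mathds{D}_0$ with $h_n$ admissible the subspace property gives $W_n+h_n\to w+h\in\mathds{D}_0$, whence
\begin{align*}
G_n(h_n) &= r_n\big\{ \phi\big(\theta+r_n^{-1}(W_n+h_n)\big)-\phi\big(\theta+r_n^{-1}W_n\big)\big\} = g_n(W_n+h_n)-g_n(W_n) \\
&\longrightarrow D\phi(\theta)[w+h]-D\phi(\theta)[w] = D\phi(\theta)[h],
\end{align*}
using the difference-quotient form of Hadamard differentiability (valid since $r_n^{-1}\to0$) for each of the two summands and the linearity of $D\phi(\theta)$ on the subspace $\mathds{D}_0$ for the last equality. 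Only linearity of $D\phi(\theta)$ on $\mathds{D}_0$ and concentration of $T$ on $\mathds{D}_0$ enter here; no continuous extension of $D\phi(\theta)$ to $\mathds{D}$ is needed.

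With this continuous convergence in hand, the conclusion follows from a conditional version of the extended continuous mapping theorem: since $W_n^{*}$ converges conditionally to $T$ in the $\mathrm{BL}_1(\mathds{D})$ sense in outer probability and is asymptotically measurable (the hypotheses), and $G_n$ converges continuously to $D\phi(\theta)$ along $\mathds{D}_0\ni T$, the sequence $G_n(W_n^{*})=r_n\{\phi(\hat{\theta}_n^{*})-\phi(\hat{\theta}_n)\}$ converges conditionally to $D\phi(\theta)[T]$ in $\mathrm{BL}_1(\Rbb^\kappa)$ in outer probability, which is the assertion. To prove this conditional statement I would pass to subsequences and invoke the almost-sure representation theorem for the data together with the bootstrap randomness on an enlarged product probability space — here the asymptotic-measurability hypothesis on $W_n^{*}$, plus a companion lemma supplying asymptotic measurability of $G_n(W_n^{*})$, is exactly what licenses the representation — apply the unconditional extended continuous mapping theorem along almost every data path with $g_n=G_n$ as above, and finally translate back to the $\sup_{h\in\mathrm{BL}_1}$, in-outer-probability formulation; finite-dimensionality of the target $\Rbb^\kappa$ makes the measurability checks automatic.

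The main obstacle is precisely this last step. The algebra reducing $r_n\{\phi(\hat{\theta}_n^{*})-\phi(\hat{\theta}_n)\}$ to $G_n(W_n^{*})$ and the verification that $G_n$ converges continuously along $\mathds{D}_0$ are short and use only Hadamard differentiability and linearity of the derivative. The difficulty is that (i) $G_n$ is defined in a data-dependent way on a moving, shrinking domain $\mathds{D}_n-W_n$, so no fixed-base-point delta method can be quoted directly, and (ii) one cannot collapse the $\mathrm{BL}_1(\Rbb^\kappa)$ supremum in the conclusion onto the $\mathrm{BL}_1(\mathds{D})$ supremum in the hypothesis by composition, since $h\circ G_n$ fails to be bounded-Lipschitz on $\mathds{D}$ ($G_n$ being only asymptotically linear), so a genuine continuous-mapping argument with careful bookkeeping of outer expectations, asymptotic measurability, and "in outer probability" through the subsequence/representation argument is unavoidable. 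These are the technicalities carried out in van der Vaart and Wellner \cite[\S 3.9]{vandervaart1996} on the way to their Theorem 3.9.11 (equivalently \cite[Theorem 23.9]{vandervaart2000}), and I would follow that route.
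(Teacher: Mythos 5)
This statement is quoted verbatim from van der Vaart and Wellner (Theorem 3.9.11 of \emph{Weak Convergence and Empirical Processes}, equivalently Theorem 23.9 of van der Vaart's \emph{Asymptotic Statistics}); the paper supplies no proof of its own, only the citation. Your outline correctly reproduces the standard argument from that source --- the difference-quotient maps $g_n$, the identity $r_n\{\phi(\hat{\theta}_n^{*})-\phi(\hat{\theta}_n)\}=g_n(W_n+W_n^{*})-g_n(W_n)$, linearity of $D\phi(\theta)$ on the subspace $\mathds{D}_0$, and a conditional extended continuous mapping theorem via subsequences and almost-sure representations --- so it matches the intended proof, with the one genuinely technical step (the conditional continuous-mapping argument with its outer-probability and measurability bookkeeping) correctly identified but deferred to the cited reference rather than carried out.
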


\end{document}